\newtheorem{thm}{Theorem}
\newtheorem{defn}{Definition}
\newtheorem{prop}{Proposition}
\newtheorem{lem}{Lemma}
\newtheorem{cor}{Corollary}
\newtheorem{ex}{Example}
\newtheorem{cex}{Counterexample}
\newcommand{\mcl}[1]{\mathcal{#1}}
\newcommand{\mbf}[1]{\mathbf{#1}}
\newcommand{\R}{\mathbb{R}}
\newcommand{\N}{\mathbb{N}}
\newcommand{\norm}[1]{\lVert{#1}\rVert}
\newcommand{\eps}{\varepsilon}
\DeclareMathOperator{\sign}{sign}
\DeclareMathOperator*{\esssup}{ess\,sup}
\DeclareMathOperator*{\essinf}{ess\,inf}
\title{\LARGE \bf
Polynomial Approximation of Value Functions and Nonlinear Controller Design with Performance Bounds
}
\author{Morgan Jones,%
		\thanks{M. Jones is with the Department of Automatic Control and Systems Engineering,
	The University of Sheffield,   {\tt \scriptsize morgan.jones@sheffield.ac.uk} } 
	Matthew M. Peet
	\thanks{M. Peet is with the School for the Engineering of Matter, Transport and Energy, Arizona State University, Tempe, AZ, 85298 USA. e-mail: {\tt \small mpeet@asu.edu } }
}
\begin{document}

\maketitle
\thispagestyle{plain}
\pagestyle{plain}


\begin{abstract}
For any suitable Optimal Control Problem (OCP) there exists a value function, defined as the unique viscosity solution to the Hamilton-Jacobi-Bellman (HJB) Partial-Differential-Equation (PDE), and which can be used to design an optimal feedback controller for the given OCP. In this paper we approximately solve the HJB-PDE by proposing a sequence of Sum-Of-Squares (SOS) problems, each of which yields a polynomial sub-solution to the HJB-PDE. We show that the resulting sequence of polynomial sub-solutions converges to the value function of the OCP in the $L^1$ norm. Furthermore, for each polynomial sub-solution in this sequence we show that the associated sequence of sublevel sets converges to the sublevel set of the value function of the OCP in the volume metric. Next, for any approximate value function, obtained from an SOS program or any other method (e.g. discretization), we construct an associated feedback controller, and show that sub-optimality of this controller as applied to the OCP is bounded by the distance between the approximate and true value function of the OCP in the $W^{1,\infty}$ (Sobolev) norm. Finally, we demonstrate numerically that by solving our proposed SOS problem we are able to accurately approximate value functions, design controllers and estimate reachable sets.





\end{abstract}

\vspace{-0.6cm}

\section{Introduction}
Consider a nested family of Optimal Control Problems (OCPs), each initialized by $(x_0,t_0) \in \R^n \times [0,T]$, and each an optimization problem of the form

\vspace{-0.4cm}
{
\begin{align} \nonumber
 & (\mbf u^*, x^*) \in \arg \inf_{\mbf u,x} \bigg\{  \int_{t_0}^T c(x(t), \mbf u(t) ,t) dt + g(x(T))  \bigg\} \text{ subject to, }\\ \label{intro: optimal control probelm}
&   \dot{x}(t) \hspace{-0.05cm} = \hspace{-0.05cm} f(x(t), \mbf u(t)),  \text{ } \mbf u(t) \hspace{-0.05cm} \in \hspace{-0.05cm} U,  \text{ for all } t \hspace{-0.05cm} \in \hspace{-0.05cm} [t_0,T], \text{ } \hspace{-0.05cm} x(t_0) \hspace{-0.05cm} = \hspace{-0.05cm}x_0.
\end{align} }
{The problem of solving OCPs~\eqref{intro: optimal control probelm} plays a central role in many practical applications, for instance in the design of non-pharmaceutical interventions in epidemics~\cite{kantner2020beyond}, optimal train operation~\cite{khmelnitsky2000optimal}, optimal maintenance strategies for manufacturing systems~\cite{huang2018real}, etc.   }

{Solving OCPs directly can be challenging. Fortunately, the problem of solving a family of OCPs~\eqref{intro: optimal control probelm} can be reduced to the problem of solving a Partial Differential Equation (PDE)~\cite{liberzon2011calculus}}. From the principle of optimality, if $(\mbf u^*, x^*)$ solve the OCP for $(x_0,t_0)$, then {$(\tau_t \mbf u^*, x^*(t))$ (where $\tau_t \mbf u^*(s) =  \mbf u^*(t+s)$ for all $s \ge 0$)} solves the OCP for $(x^*(t),t)$ for any $t \in[t_0,T]$. This can be used to show that if a function, $V$, satisfies the Hamilton Jacobi Bellman (HJB) PDE, defined as
	\vspace{-0.15cm}	\begin{align}  \nonumber
&\nabla_t V(x,t) + \inf_{u \in U} \left\{ c(x,u,t) + \nabla_x V(x,t)^T f(x,u) \right\} = 0 \\
& \hspace{3.75cm} \text{for all } (x,t) \in \R^n \times (0,T), \label{intro: general HJB PDE}\\ \nonumber
& V(x,T)= g(x) \quad \text{for all } x \in \R^n,
\end{align}
then necessary and sufficient conditions for $(\mbf u^*, x^*)$ to solve OCP~\eqref{intro: optimal control probelm} initialized by $(x_0,t_0)$ are
\vspace{-0.2cm}
	\begin{align} \nonumber
& \mbf u ^*(t)= k(x^*(t), t),\;  	\dot{x}^*(t)= f(x^*(t),\mbf u ^*(t)),\;\;\; \text{and}\;\;\; x^*(t_0)=x_0,\notag \\
& \;\text{where}\;\; k(x,t) \in \arg \inf_{u \in U}\left\{ c(x,u,t) + \nabla_x V(x,t)^T f(x,u)\right\}. \label{intro: opt controller}
	\end{align}
	For a given family of OCPs~\eqref{intro: optimal control probelm}, if $V$ satisfies Eq.~\eqref{intro: general HJB PDE}, then $V$  is called the {Value Function} (VF) of the OCP. If $V$ is the VF, then for any $(x,t)$, the value $V(x,t)$ determines the optimal objective value of OCP~\eqref{intro: optimal control probelm} initialized by $(x,t)$. Furthermore, the VF yields a solution to the OCP~\eqref{intro: optimal control probelm} initialized by $(x_0,t_0)$ through application of Eq.~\eqref{intro: opt controller}. We call any $k: \Omega \times [0,T] \to U$ that satisfies Eq.~\eqref{intro: opt controller} a controller and we say this controller is the optimal controller for the OCP when $V$ is the VF of the OCP.
	
Thus knowledge of the VF allows us to solve the nested family of OCPs in~\eqref{intro: optimal control probelm}. Unfortunately, to find the VF, we must solve the HJB PDE, given in Eq.~\eqref{intro: general HJB PDE}, and this PDE has no analytic solution. In the absence of an analytic solution, we often parameterize a family of candidate VFs and search for one which satisfies the HJB PDE. However, this is a non-convex optimization problem since the HJB PDE is nonlinear. In this paper we view the search for a VF through the lens of convex optimization. Moreover, given an OCP, we are particularly interested in computing a sub-VF, a function that is uniformly less than or equal to the VF of the OCP (ie a function $\tilde{V}$ such that $\tilde{V}(x,t) \le V(x,t)$ for all $(x,t) \in \R^n \times [0,T]$ where $V$ is the VF of the OCP). We consider what happens when we relax the nonlinear equality constraints imposed by the HJB PDE to linear inequality constraints and tighten the optimization problem's feasible set to polynomials. In this context, given an OCP, we consider the following questions.
%
	\begin{enumerate}[start=1,label={\bfseries Q\arabic*:}]
				\item Can we pose a sequence of convex optimization problems, each yielding a polynomial sub-VF that can be made arbitrarily ``close" to the VF of the OCP?
                \item Can we bound the sub-optimality in performance of a controller constructed from some function ${V}$ by the ``distance" between ${V}$ and the VF of the OCP?
		\end{enumerate}
	\vspace{-0.4cm}
	\subsection{\textbf{Q1}: Optimal Polynomial Sub-Value Functions}
Over the years, many numerical methods have been proposed for solving the HJB PDE~\eqref{intro: general HJB PDE} for a given OCP. Within this literature, a substantial number of the algorithms are based on a finite-dimensional projection of the spatial domain (griding/meshing/discretization of the state space). In this class of algorithms we include (mixed) finite elements methods - an important example of which is~\cite{gallistl2020mixed}. Specifically, the approach in~\cite{gallistl2020mixed} yields an approximate VF with an error bound on the first order mixed $L^2$ norm - a bound which converges as the number of elements is increased (assuming the Cordes condition holds). Other examples of this class of methods include the discretization approaches in~\cite{achdou2008homogenization,kalise2018polynomial}. For example, in~\cite{achdou2008homogenization}, we find an algorithm which yields an approximate VF with an $L^{\infty}$ error bound which converges as the level of discretization increases. Alternative non-grid based algorithms include the method of characteristics~\cite{liberzon2011calculus}, which can be used to compute evaluations of VF at fixed $(x,t) \in \R^n$, and max-plus methods~\cite{mceneaney2007curse}. The result in~\cite{mceneaney2007curse} considers an OCP with linear dynamics and a cost function which is the point-wise maximum of quadratic functions. This max-plus approach yields an approximate VF with a converging error bound which holds on $x \in\R^n$, but increases with $|x|$.


While all of these numerical methods yield approximate VFs with associated approximation error bounds, the use of these functions for controller synthesis (see \textbf{Q2}) and reachable set estimation has been more limited (the connection between VFs, the HJB and reachable sets was made in~\cite{mitchell2005time}). This is due to the fact that the approximate VFs obtained from such discretization methods are difficult to manipulate and {apart from being close to the true VF,} have relatively few provable properties (such as being {uniformly less than or greater to the true VF ie being} sub or super-VFs). {Being a sub or super-VF is an important property of any approximate VF. As shown in Cor.~\ref{cor: sub value subleve; sets contain reachable sets}, sub/super-VFs can yield outer bounds on reachable sets that can be used to certify that the underlying system does not transition into regions of the state space deemed unsafe; a useful tool in the safety analysis of dynamical systems.}

%


To address these issues, in this paper we focus on obtaining approximate VFs which are both polynomial and sub-VFs. Specifically, the use of polynomials ensures that the derivative of the approximated VFs can be efficiently computed (a useful property for solving the controller synthesis Eq.~\eqref{intro: opt controller}), while the use of sub-VFs ensures that sublevel sets of the VF are guaranteed to contain the sublevel set of the true VF (see Cor.~\ref{cor: sub value subleve; sets contain reachable sets}), and hence provide provable guarantees on the boundary of the reachable set (a useful property for safety analysis).

Substantial work on SOS relaxations of the HJB PDE for reachable set estimation {and safety set analysis} includes the carefully constructed optimization problems in~\cite{summers2013quantitative, yin2018reachability,xue2019inner,zhang2019set} and includes, of course, our work in~\cite{jones2019relaxing,jones2019using}. {Such SOS relaxations of the HJB PDE can yield approximate VFs}. However, there seems to be no prior work on using approximation theory to prove bounds on the sub-optimality of either controllers (see \textbf{Q2}) or corresponding reachable sets {constructed from such approximated VFs}. We note, however, that~\cite{xue2019inner} did establish {the} \textit{existence} of a polynomial sub-solution to the HJB {arbitrarily close to the true solution of the HJB} in the framework of reachable sets. Treatments of the moment-based alternatives to the SOS approach includes~\cite{kamoutsi2017infinite,Pakniyat_2019,korda2016controller,zhao2017control}. Another duality-based approach, found in~\cite{chen2019duality}, considers a density-based dual to the VF and uses finite elements method to iteratively approximate the density and VF.

In this paper we answer {\bfseries Q1} by considering ``sub-solutions'' to the HJB PDE~\eqref{intro: general HJB PDE}. Specifically, a ``sub-solution'', $\tilde V$, to the HJB PDE~\eqref{intro: general HJB PDE} satisfies the relaxed inequality constraint
\vspace{-0.1cm}
\begin{align} \label{intro: relax HJB}
\nabla_t \tilde V(x,t) +  c(x,u,t) + \nabla_x \tilde V(x,t)^T f(x,u) \ge 0 
\end{align}
for all $u \in U$ and $(x,t)\in \R^n \times [0,T]$, which implies that if $V$ is a VF, $\tilde V(x,t)\le V(x,t)$ - i.e. $\tilde V$ is a sub-VF. Then given an OCP~\eqref{intro: optimal control probelm} and based on this relaxed version of the HJB PDE~\eqref{intro: relax HJB}, we propose a sequence of SOS programming problems, indexed by the degree $d \in \N$ of the polynomial variables, and given in Eq.~\eqref{opt: SOS for sub soln of finite time}. The solution to each instance of the proposed sequence of optimization problems yields a polynomial $P_d$ that is a sub-solution to the HJB PDE~\eqref{intro: general HJB PDE} (or sub-VF). We then show in Prop.~\ref{prop: SOS converges} that for any VF $V$ associated with the given OCP we have, \vspace{-0.2cm}
\[
\lim_{d\rightarrow \infty}\norm{P_d-V}_{L^1}=0.
\]
Furthermore, in Prop.~\ref{prop: SOS converges Dv} we show that this implies that the sublevel sets of $\{P_d\}_{d \in \N }$ converge to the sublevel sets of any VF, $V$, of the OCP (respect to the volume metric).

 Our proposed method of approximately solving the HJB PDE by solving an SOS programming problem is implemented via Semi-Definite Programming (SDP). SDP problems can be solved to arbitrary accuracy in polynomial time using interior point methods~\cite{vandenberghe1996semidefinite}. However, the number of variables in the SDP problem associated with an $n$-dimensional and $d$-degree SOS problem is of the order $n^{d}$~\cite{ahmadi2019dsos}, and therefore exponentially increases as $d \to \infty$. {Fortunately there exist several methods that improve the scalability of SOS~\cite{ahmadi2019dsos,zheng2019block} but we do not discus such methods in this paper.}
\vspace{-0.25cm}
\subsection{\textbf{Q2}: Performance bounds for controllers constructed from approximate VFs}
The use of approximate VFs to construct controllers has been well-treated in the literature, although such controllers often: apply only to OCPs with specific structure (typically dynamics are affine in the input variable, see~\cite{ribeiro2020control} for linearization techniques that approximate non-input affine dynamics by input affine dynamics); do not have associated performance bounds; and/or assume differentiability of the VF. For example, in~\cite{jiang2015global,abu2005nearly,baldi2012scalable,baldi2015piecewise,zhu2017policy} policy iteration methods are proposed that alternate between finding approximations of the VF based on a controller and using the approximate VF to synthesizing controller. Also in~\cite{abu2005nearly} it was shown that the proposed policy iteration method converges under the rather restrictive assumption that the true VF is differentiable.  Alternatively, grid based approaches that synthesize controllers can be found in~\cite{kang2017mitigating, kunisch2004hjb}. However, the method in~\cite{kang2017mitigating} is only shown to yield a function that converges to the VF but no performance bound is given for the controller. In~\cite{kunisch2004hjb}, convergence to the optimal controller is demonstrated numerically in certain cases, but no provable performance bound is given.

There are also results within the SOS framework for optimization of polynomials that use approximate VFs to construct controllers. For example, in~\cite{leong2014optimal} it was shown that the objective value of a specific class of OCP's using a controller constructed from a given approximate VF was bounded from above by the approximated VF. However, this bound was conservative and no method was given for refinement of the bound. In~\cite{jennawasin2011performance} a method for approximating VFs by sub and super-VFs that are also SOS polynomials is given, however, no VF approximation error bounds or resulting controller synthesis performance bound is given. Alternatively~\cite{cunis2020sum} proposes a bilinear SOS optimization framework which iterates between finding a Lyapunov function and finding a controller to maximize the region of attraction. However, this work does not consider OCPs or VFs per se.

Despite this extensive literature, to the best of our knowledge, there exists no way of constructing approximate VFs for which the performance of the associated controller can be proven to be arbitrarily close to optimal (although such bounds exist for discrete time systems over infinite time horizons~\cite{bertsekas1995neuro}). For such a result to exist in continuous-time over finite time horizons, then, we need some way of bounding sub-optimality of the performance of the controller based on distance of the approximated VF to the true VF.


To address this need, in Sec.~\ref{sec:optimal controller approx} we answer {\bfseries Q2} by showing that for any $V$, we can construct a candidate solution to the OCP~\eqref{intro: optimal control probelm}, $\mbf u(t)=k(x(t),t)$, given by the controller defined in Eq.~\eqref{intro: opt controller}. We then show in Thm.~\ref{thm: performance bounds} that the corresponding objective value of the OCP~\eqref{intro: optimal control probelm} evaluated at $\mbf u$ is within $C \norm{V^*-V}_{W^{1,\infty}}$ of the optimal objective, where $V^*$ is the true VF of the OCP and $C>0$ is given in Eq.~\eqref{eqn: C}.{This result implies approximation of value functions in the $W^{1,\infty}$ norm results in feedback controllers with performance that can be made arbitrarily close to optimality}. Note, this result may be of broad interest since it does not require $V$ to be a solution to our proposed SOS Problem~\eqref{opt: SOS for sub soln of finite time} and hence provides a bound on the sub-optimality of controllers constructed from any approximate VF.

\vspace{-0.4cm}
\section{{ Notation}} \label{sec: Notation}
\vspace{-0.4cm}
\subsection{Standard Notation} \label{sec: notataion A}
 We define $\sign: \R \to \{-1,1\}$ and for $A \subset \R^n$ $\mathds{1}_A : \R^n \to \R$ by  $\sign(x) = \begin{cases}
& 1 \text{ if } x\ge 0\\
& -1 \text{ otherwise}
\end{cases}$ and $\mathds{1}_A(x) = \begin{cases}
& 1 \text{ if } x \in A\\
& 0 \text{ otherwise.}
\end{cases}$ For two sets $A,B \subset \R^n$ we denote $A/B=\{x \in A: x \notin B\} $. For $B \subseteq \R^n$,  $\mu(B):=\int_{\R^n} \mathds{1}_B (x) dx$ is the Lebesgue measure of $B$, and for $X \subseteq \R^n$ and a function $f:X \to \R$ we denote the essential infimum by $\essinf_{x \in X}f(x):=\sup\{a \in \R: \mu(\{x \in X: f(x)<a\})= 0\}$. Similarly we denote the essential supremum by $\esssup_{x \in X}f(x):=\inf\{a \in \R: \mu(\{x \in X: f(x)>a\})= 0\}$. For $x \in \R^n$ we denote the Euclidean norm by $||x||_2=\sqrt{\sum_{i=1}^n x_i^2 }$. For $r>0$ and $x \in \R^n$ we denote the ball $B(x,r):=\{y \in \R^n: ||x-y||_2<r\}$. For an open set $\Omega \subset \R^n$ we denote the boundary of the set by $\partial \Omega$ and denote the closure of the set by $\overline{\Omega}$. Let $C(\Omega, \Theta)$ be the set of continuous functions with domain $\Omega \subset \R^n$ and image $\Theta \subset \R^m$. For an open set $\Omega \subset \R^n$ and $p \in [1,\infty)$ we denote the set of $p$-integrable functions by $L^p(\Omega,\R):=\{f:\Omega \to \R \text{ measurable }: \int_{\Omega}|f|^p<\infty    \}$, in the case $p = \infty$ we denote $L^\infty(\Omega, \R):=\{f:\Omega \to \R \text{ measurable }: \esssup_{x \in \Omega}|f(x)| < \infty \}$. For $\alpha \in \N^n$ we denote the partial derivative $D^\alpha f(x):= \Pi_{i=1}^{n} \frac{\partial^{\alpha_i} f}{\partial x_i^{\alpha_i}} (x)$ where by convention if $\alpha=[0,..,0]^T$ we denote $D^\alpha f(x):=f(x)$. {We denote the set of $i$ continuously differentiable functions} by $C^i(\Omega,\Theta):=\{f \in C(\Omega,\Theta): D^\alpha f \in C(\Omega, \Theta) \text{ } \text{ for all } \alpha \in \N^n \text{ such that } \sum_{j=1}^{n} \alpha_j \le i\}$. For $k \in \N$ and $1 \le p \le \infty$ we denote the Sobolev space of functions with weak derivatives (Defn.~\ref{def: weak deriv}) by $W^{k,p}(\Omega,\R):=\{u\in L^p(\Omega,\R): D^\alpha u \in L^p(\Omega,\R) \text{ for all } |\alpha| \le k \}$. For $u \in W^{k,p}(\Omega,\R)$ we denote the Sobolev norm $||u||_{W^{k,p}(\Omega, \R)}:= \begin{cases}
\left( \sum_{|\alpha| \le k} \int_\Omega (D^\alpha u(x))^p dx \right)^{\frac{1}{p}} \text{ if } 1 \le p < \infty\\
\sum_{|\alpha| \le k} \esssup_{ x \in \Omega  } \{|D^\alpha u(x) |\} \text{ if } p= \infty.
\end{cases}$ In the case $k=0$ we have $W^{0,p}(\Omega,\R)=L^p(\Omega,\R)$ and thus we use the notation $|| \cdot ||_{L^p(\Omega,\R)} :=|| \cdot ||_{W^{0,p}(\Omega,\R)} $. We denote the shift operator $\tau_{s}:L^2([0,T], \R^m) \rightarrow L^2([0,T-s], \R^m)$, where $s \in [0,T]$, and defined by $ (\tau_{s} \mbf u)(t) := \mbf u(s+t) \text{ for all } t \in [0,T-s]$.
\vspace{-0.35cm}
\subsection{{Non-Standard Notation}} \label{sec: notation B}
We denote the set of locally and uniformly Lipschitz continuous functions on $\Theta_1 \text{ and }\Theta_2$, Defn.~\ref{defn:lip cts}, by $LocLip(\Theta_1,\Theta_2)$ and $Lip(\Theta_1,\Theta_2)$ respectively. Let us denote bounded subsets of $\R^n$ by $\mcl B:=\{B \subset \R^n: \mu(B)<\infty\}$.  If $M$ is a subspace of a vector space $X$ we denote equivalence relation $\sim_M$ for $x,y \in X$ by $x \sim_M y$ if $x-y \in M$. We denote quotient space by $X \pmod M:=\{ \{y \in X: y \sim_M x \}: x \in X\}$. For an open set $\Omega \subset \R^n$ and $\sigma>0$ we denote $<\Omega>_\sigma:=\{x \in \Omega: B(x,\sigma) \subset \Omega \}$. For $V \in C^1(\R^n \times \R, \R)$ we denote $\nabla_x V:= (\frac{\partial V}{\partial x_1},....,\frac{\partial V}{\partial x_n})^T$ and $\nabla_t V=\frac{\partial V}{\partial x_{n+1}} $. We denote the space of polynomials $p: \Omega \to \Theta$ by $\mcl P(\Omega,\Theta)$ and polynomials with degree at most $d \in \N$ by $\mcl{P}_d(\Omega,\Theta)$. We say $p \in \mcl{P}_d(\R^n,\R)$ is Sum-of-Squares (SOS) if there exists $p_i \in \mcl{P}_d(\R^n,\R)$ such that $p(x) = \sum_{i=1}^{k} (p_i(x))^2$. We denote $\sum_{SOS}^d$ to be the set SOS polynomials of at most degree $d \in \N$ and the set of all SOS polynomials as $\sum_{SOS}$. We denote $Z_d: \R^n \times \R \to \R^{\mcl N_d}$ as the vector of monomials of degree $d \in \N$ or less and of size $\mcl N_d:= {d+n \choose d}$.
\vspace{-0.3cm}
\section{Optimal Control Problems} \label{sec: optimal control}
The nested family of finite-time Optimal Control Problems (OCPs), each initialized by $(x_0,t_0) \in \R^n\times [0,T]$, are defined as:
\vspace{-0.3cm}
\begin{align}  \nonumber
& (\mbf u^*, x^*) \in \arg \inf_{\mbf u, x} \bigg\{  \int_{t_0}^T c(x(t), \mbf u(t) ,t) dt + g(x(T))  \bigg\} \text{ subject to, }\\ \label{opt: optimal control probelm}
&   \dot{x}(t) = f(x(t), \mbf u(t)) \quad  \text{ for all } t \in [t_0,T],\\ \nonumber
&  (x(t), \mbf u(t)) \in \Omega \times U \quad  \text{ for all } t \in [t_0,T], \quad   x(t_0)=x_0,
\end{align}
where $c: \R^n \times \R^m \times \R \to \R$ is referred to as the running cost; $g: \R^n \to \R$ is the terminal cost; $f: \R^n \times \R^m \to \R^n$ is the vector field; $\Omega \subset \R^n$ is the state constraint set; $U \subset \R^m$ is the input constraint set; and $T$ is the final time. For a given family of OCPs of Form \eqref{opt: optimal control probelm} we associate the tuple $\{c,g,f,\Omega,U,T\}$. 

In this paper we consider a special class of OCPs of Form~\eqref{opt: optimal control probelm}, where $U$ is compact and $c,g,f$ are locally Lipschitz continuous. We next recall the definition of local Lipschitz continuity.
\begin{defn} \label{defn:lip cts}
	Consider sets $\Theta_1 \subset \R^n$ and $\Theta_2 \subset \R^m$. We say the function $F: \Theta_1 \to \Theta_2$ is \textbf{locally Lipschitz continuous} on $\Theta_1 \text{ and }\Theta_2$, denoted $F \in LocLip(\Theta_1, \Theta_2)$, if for every compact set $X \subseteq \Theta_1$ there exists $K_X>0$ such that for all $x,y \in X$
	\begin{align} \label{lip}
	||F(x) - F(y)||_2 \le K_X ||x - y||_2.
	\end{align}
	If there exists $K>0$ such that Eq.~\eqref{lip} holds for all $x,y \in \Theta_1$ we say $F$ is \textbf{uniformly Lipschitz continuous}, denoted $F \in Lip(\Theta_1,\Theta_2)$.
\end{defn}
\vspace{-0.1cm}
\begin{defn} \label{defn: lip optimal control}
	We say the six tuple $\{c,g,f,\Omega,U,T\}$ is a Family of \textit{Lipschitz OCPs} of Form~\eqref{opt: optimal control probelm} or $\{c,g,f,\Omega,U,T\} \in \mcl M_{Lip}$ if:
{	\begin{enumerate}
		\item $c \in  LocLip(\Omega \times U \times [0,T], \R )$.
		\item $g \in LocLip(\Omega,\R)$.
		\item  $f \in LocLip(\Omega \times U, \R )$.
		\item $U \subset \R^m$ is compact.
	\end{enumerate} }
\end{defn}
\vspace{-0.2cm}
For $\{c,g,f,\Omega,U,T\} \in \mcl M_{Lip}$, if $\Omega = \R^n$ we say the family of associated OCPs is \textit{state unconstrained}, and if $\Omega \neq \R^n$ we say the associated family of OCPs is \textit{state constrained}.

\vspace{-0.2cm}
\section{Properties and Importance of Value Functions} \label{Section: finite time}
We recall several important properties of  Value Functions (VFs) that we use to prove the main result of the paper, given in Sec~\ref{sec: SOS implemtation}. In the following subsections we recall that for every family of Lipschitz OCPs, as defined in Section~\ref{sec: optimal control}, there exists a function, called the Value Function (VF), which:
\begin{enumerate}[label=(\Alph*)]
\item Is determined by the solution map - Eq.~\eqref{opt: optimal control general}.
\item Solves the Hamilton-Jacobi-Bellman (HJB) Partial Differential Equation (PDE) - Eq.~\eqref{eqn: general HJB PDE}.
\item Can be used to construct a solution to the OCP.
\end{enumerate}
\vspace{-0.2cm}
\subsection{Value Functions Are Determined By The Solution Map}
Consider a nonlinear Ordinary Differential Equation (ODE) of the form
\vspace{-0.55cm}
\begin{align} \label{eqn: ODE}
& \dot{x}(t) = f(x(t), \mbf u(t)), \quad x(0)=x_0,
\end{align}
where $f:\R^n \times \R^{m} \to \R^n$, $ \mbf u: \R \to \R^{m}$, and $x_0 \in \R^n$.
\begin{defn}
We say the function $\phi_f$ is a solution map of the ODE given in Eq.~\eqref{eqn: ODE} on $[0,T] \subset \R$ if for all $ t \in [0,T]$
\begin{align*}
& \frac{\partial \phi_f(x_0, t,\mbf u)}{\partial t}= f(\phi_f(x_0,t, \mbf u), \mbf u(t)), \text{ and } \phi_f(x_0,0,\mbf u)=x_0.
\end{align*}
\end{defn}

{{\textbf{Definition of Admissible Inputs:}}} Given $\{c,g,f,\Omega,U,T\} \in \mcl M_{Lip}$ and associated family of OCPs of Form~\eqref{opt: optimal control probelm}, we now use the solution map to define the set of admissible input signals for the OCP initialized at $(x_0,t_0) \in \Omega \times [0,T]$. For this we use the shift operator, denoted $\tau_{s}:L^2([0,T], \R^m) \rightarrow L^2([0,T-s], \R^m)$, where $s \in [0,T]$, and defined by \begin{align} \label{defn: shift operator}
(\tau_{s} \mbf u)(t) := \mbf u(s+t) \text{ for all } t \in [0,T-s]. \end{align}
\begin{defn} \label{defn: soln map}
	For any $(x_0,t_0) \in \R^n\times[0,T]$,  we say $\mbf u$ is admissible, denoted $\mbf u \in \mcl U_{\Omega,U,f,T}(x_0,t_0)$, if $\mbf u: [t_0,T] \to U$ and there exists a unique solution map, $\phi_f$, such that
	\begin{align} \nonumber
	& \frac{\partial \phi_f(x_0, t-t_0, \tau_{t_0}\mbf u)}{\partial t}= f(\phi_f(x_0,t-t_0, \tau_{t_0}\mbf u), \mbf u(t )) \text{ for } t \in [t_0,T],\\ \label{eqn: soln map}
	&\phi_f(x_0, t-t_0, \tau_{t_0}\mbf u) \in \Omega \text{ for } t \in [t_0,T], \text{ and } \phi_f(x_0,0,\tau_{t_0} \mbf u)=x_0.
	\end{align}
\end{defn}


For a given family of OCPs of Form~\eqref{opt: optimal control probelm}, we now define the associated VF using the solution map, $\phi_f$. Lemma~\ref{lem: value function is lip} then shows that VFs are locally Lipschitz continuous.

\begin{defn} \label{defn: VF}
	For given $\{c,g,f,\Omega,U,T\} \in \mcl M_{Lip}$ we say $V^*: \R^n \times \R \to \R$ is a \textit{Value Function (VF)} of the associated family of OCPs if for $(x,t) \in \Omega \times [0,T]$, the following holds
{\small	\begin{align}
	V^*(x,t) \label{opt: optimal control general} = & \inf_{\mbf u \in \mcl U_{\Omega,U,f,T}(x,t)} \bigg\{\\ \nonumber
	 & \int_{t}^{T} c(\phi_f(x,s-t, \tau_t \mbf u),\mbf u(s),s) ds  + g(\phi_f(x,T-t, \tau_t \mbf u)) \bigg\},\notag
	\end{align} }
	where $\phi_f$ is as in Eq.~\eqref{eqn: soln map}. By convention if $\mcl U_{\Omega,U,f,T}(x,t) = \emptyset$ then $V^*(x,t)=\infty$.
\end{defn}

\begin{lem}[\cite{bressan2011viscosity}, Local Lipschitz continuity of VF] \label{lem: value function is lip}
	Consider some $\{c,g,f,\R^n,U,T\} \in \mcl M_{Lip}$. Then if $V^*$ satisfies Eq.~\eqref{opt: optimal control general}, we have that $V^* \in LocLip(\R^n \times [0,T], \R )$.
\end{lem}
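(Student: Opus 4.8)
The plan is to establish the bound $\abs{V^*(x_1,t_1)-V^*(x_2,t_2)} \le C\,(\norm{x_1-x_2}_2 + \abs{t_1-t_2})$ for all $(x_1,t_1),(x_2,t_2)$ in an arbitrary compact set $\overline{B(0,R)}\times[0,T]$, which is precisely local Lipschitz continuity of $V^*$. I would obtain this by estimating the spatial variation (fix $t$, vary $x$) and the temporal variation (fix $x$, vary $t$) separately and then combining them through the triangle inequality. The key structural simplification is that here $\Omega=\R^n$, so the state-constraint requirement in Defn.~\ref{defn: soln map} is vacuous: an input is admissible precisely when it maps into $U$ and generates a unique solution map on the relevant interval. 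In particular the same input is admissible from two distinct initial states, which is exactly what lets me compare trajectories launched from $x_1$ and $x_2$.

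For the spatial estimate I fix $t$ and take $x_1,x_2 \in \overline{B(0,R)}$. Given $\eps>0$, I pick an input $\mbf u$ that is $\eps$-suboptimal for $(x_1,t)$ and feed the \emph{same} $\mbf u$ into the dynamics from $x_2$; writing $\phi_i(s):=\phi_f(x_i,s-t,\tau_t\mbf u)$, Gr\"onwall's inequality with the Lipschitz constant $L_f$ of $f$ gives $\norm{\phi_1(s)-\phi_2(s)}_2 \le e^{L_f(s-t)}\norm{x_1-x_2}_2 \le e^{L_fT}\norm{x_1-x_2}_2$. Since $\mbf u$ is feasible (hence suboptimal) for $(x_2,t)$, subtracting the two cost functionals and using the Lipschitz constants $L_c$ of $c$ and $L_g$ of $g$ converts this trajectory bound into $V^*(x_2,t)-V^*(x_1,t) \le (L_cT+L_g)e^{L_fT}\norm{x_1-x_2}_2+\eps$. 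Swapping the roles of $x_1,x_2$ and letting $\eps\to 0$ yields a spatial Lipschitz constant $C_1:=(L_cT+L_g)e^{L_fT}$.

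For the temporal estimate I fix $x$ and take $t_1<t_2$, using the sup-bounds $\norm{f}_2\le M_f$ and $\abs{c}\le M_c$ on the relevant compact set together with a concatenation/truncation argument. In one direction I prepend a constant control $\bar u\in U$ on $[t_1,t_2]$ (legitimate since $U$ is nonempty and compact) to an $\eps$-optimal control for the state reached at $t_2$; this moves the state by at most $M_f(t_2-t_1)$ and costs at most $M_c(t_2-t_1)$, so combining with the spatial bound $C_1$ controls $V^*(x,t_1)-V^*(x,t_2)$. In the other direction I take an $\eps$-optimal control for $(x,t_1)$, restrict it to $[t_2,T]$ (so it is feasible from the state $z$ reached at time $t_2$, with $\norm{z-x}_2\le M_f(t_2-t_1)$), and bound the discarded running cost on $[t_1,t_2]$ by $M_c(t_2-t_1)$, again invoking $C_1$. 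Both directions give $\abs{V^*(x,t_1)-V^*(x,t_2)}\le (M_c+C_1M_f)\abs{t_1-t_2}=:C_2\abs{t_1-t_2}$. Adding the spatial and temporal estimates then produces the joint constant $C$.

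The main obstacle is not the Gr\"onwall/Lipschitz chain itself but justifying that the constants $L_f,L_c,L_g,M_f,M_c$ can be taken uniform, i.e. producing a single compact set $X\subset\R^n$ containing all trajectories under consideration. Because $f$ is only \emph{locally} Lipschitz, global existence and a priori boundedness of trajectories are not automatic (finite-time escape is a priori possible), and admissibility in Defn.~\ref{defn: soln map} only guarantees existence of each individual solution map. I would close this gap with a standard continuation argument: starting from $\overline{B(0,R)}$ over the finite horizon $[0,T]$, and using that nearby initial states under a common admissible control stay close as long as they remain in a compact region, one confines the relevant trajectories to a fixed enlarged compact set $X$ on which the local Lipschitz constants and sup-bounds are finite. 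This confinement step is the delicate part; once it is secured, the spatial and temporal estimates above go through verbatim.
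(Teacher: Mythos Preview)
The paper does not supply its own proof of this lemma: it is stated with a citation to \cite{bressan2011viscosity} and used as a black box thereafter. So there is nothing in the paper to compare your argument against directly. That said, your outline is exactly the standard proof one finds in references of that type: compare costs under a common near-optimal control and apply Gr\"onwall for the spatial modulus, then use the dynamic programming principle (concatenation/truncation of controls) for the temporal modulus.

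Your identification of the ``main obstacle'' is the one substantive point worth flagging. In the cited references the result is typically proved under a \emph{global} Lipschitz (or linear-growth) hypothesis on $f$, which immediately gives global existence, a uniform a priori bound $\norm{\phi_f(x,s,\mbf u)}_2 \le (\norm{x}_2 + M_fT)e^{L_fT}$, and hence the compact confinement set $X$ you need. Under the paper's weaker assumption $f\in LocLip(\R^n\times U,\R^n)$, finite-time blow-up is not excluded a priori, so $\mcl U_{\R^n,U,f,T}(x,t)$ could be empty and $V^*(x,t)=\infty$; your proposed ``continuation argument'' does not by itself rule this out without an additional growth condition. This is less a flaw in your proof than a slight mismatch between the hypotheses stated here and those under which the cited lemma is actually established; once one assumes enough to guarantee that admissible controls exist and trajectories from $\overline{B(0,R)}$ stay in a fixed compact set over $[0,T]$, your spatial and temporal estimates go through exactly as written.
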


\vspace{-0.25cm}
\subsection{{ Value Functions are Solutions to the HJB PDE}} \label{sec: diss ineq arbitrary well}
Consider the family of OCPs associated with $\{c,g,f,\Omega,U,T\} \in \mcl M_{Lip}$. As shown in \cite{bertsekas2005dynamic}, a sufficient condition for a function $V^*$ to be a VF, is for $V^*$ to satisfy the Hamilton Jacobi Bellman (HJB) PDE, given in Eq.~\eqref{eqn: general HJB PDE}. However, for a general family of OCPs  of form $\{c,g,f,\Omega,U,T\} \in \mcl M_{Lip}$, solutions to the HJB PDE may not be differentiable, and hence classical solutions to the HJB PDE may not exist. For this reason, one typically uses a generalized notion of a solution to the HJB PDE called a viscosity solution, which is defined in~\cite{crandall1997viscosity} as follows.

\begin{defn} \label{defn: visocity soln}
	Consider the first order PDE
	\begin{align} \label{PDE}
	F(x,y(x),\nabla y(x)) =0 \quad \text{ for all } x \in \Omega,
	\end{align}
	where $\Omega \subset \R^n$ and $F \in C( \Omega \times \R \times \R^n, \R)$.
	
	We say $y \in C(\Omega)$ is a \textbf{viscosity sub-solution} of \eqref{PDE} if
	\begin{align*}
	F(x,y(x),p) \le 0 \quad \text{ for all } x \in \Omega \text{ and } p \in {D^+y(x)},
	\end{align*}
	where $D^+y(x):=\{p \in \R: \exists \Phi \in C^1(\Omega,\R) \text{ such that } \nabla \Phi(x)=p \text{ and } y-\Phi \text{ attains a local max at x}  \}$.
	
	Similarly, $y \in C(\Omega)$ is a \textbf{viscosity super-solution} of \eqref{PDE} if
	\begin{align*}
	F(x,y(x),p) \ge 0 \quad \text{ for all } x \in \Omega \text{ and } p \in {D^-y(x)}
	\end{align*}
	where $D^-y(x):=\{p \in \R: \exists \Phi \in C^1(\Omega,\R) \text{ such that } \nabla \Phi(x)=p \text{ and } y-\Phi \text{ attains a local min at x}  \}$.
	
	We say $y \in C(\Omega)$ is a \textbf{viscosity solution} of \eqref{PDE} if it is both a viscosity sub and super-solution.
\end{defn}




\begin{thm}[\cite{bressan2011viscosity}, Uniqueness of VF] \label{thm: HJB value function}
	Consider the family of OCPs associated with the tuple $\{c,g,f,\R^n,U,T\} \in \mcl M_{Lip}$. Any function satisfying Eq.~\eqref{opt: optimal control general} is the unique viscosity solution of the HJB PDE
	\begin{align}  \nonumber
	&\nabla_t V(x,t) + \inf_{u \in U} \left\{ c(x,u,t) + \nabla_x V(x,t)^T f(x,u) \right\} = 0 \\
	& \hspace{3.75cm} \text{ for all } (x,t) \in \R^n \times [0,T] \label{eqn: general HJB PDE}\\ \nonumber
	& V(x,T)= g(x) \quad \text{ for all } x \in \R^n.
	\end{align}
\end{thm}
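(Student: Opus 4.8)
The plan is to split the statement into an existence part---any $V^*$ satisfying Eqn.~\eqref{opt: optimal control general} is a viscosity solution of \eqref{eqn: general HJB PDE}---and a uniqueness part---\eqref{eqn: general HJB PDE} has at most one continuous viscosity solution. By Lemma~\ref{lem: value function is lip} we have $V^*\in LocLip(\R^n\times[0,T],\R)\subset C(\R^n\times[0,T],\R)$, so the framework of Defn.~\ref{defn: visocity soln} applies, and the terminal condition $V^*(\cdot,T)=g$ is immediate from Defn.~\ref{defn: VF} since the running-cost integral is empty at $t=T$. The engine for the existence part is the Dynamic Programming Principle (the Principle of Optimality): for every $(x,t)\in\R^n\times[0,T]$ and every $h\in(0,T-t]$,
\begin{align*}
V^*(x,t) = \inf_{\mbf u} \Big\{ & \int_t^{t+h} c(\phi_f(x,s-t,\tau_t\mbf u),\mbf u(s),s)\,ds \\
& + V^*(\phi_f(x,h,\tau_t\mbf u),t+h) \Big\}.
\end{align*}
I would first prove this identity directly from Defn.~\ref{defn: VF} by concatenating admissible inputs and invoking uniqueness of the solution map $\phi_f$.

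Given the DPP, I would establish the two one-sided viscosity inequalities by the classical test-function argument. For the inequality attached to the super-differential, fix $(x_0,t_0)$ and $\Phi\in C^1$ such that $V^*-\Phi$ attains a local max at $(x_0,t_0)$; apply the DPP with the constant control $\mbf u\equiv u\in U$, subtract $V^*(x_0,t_0)$, use the local-max property to bound $V^*$ above by $\Phi$ along the trajectory, rewrite $\Phi(\phi_f(x_0,h,\tau_{t_0}\mbf u),t_0+h)-\Phi(x_0,t_0)=\int_{t_0}^{t_0+h}(\nabla_t\Phi+\nabla_x\Phi^T f)\,ds$, then divide by $h$ and let $h\to0^+$. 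This produces the inequality for each $u$, hence after taking $\inf_{u\in U}$. The inequality attached to the sub-differential is symmetric: touch $V^*$ from below and replace the constant control by a sequence of $\eps$-optimal controls so that the infimum over $u$ lands on the correct side. The one point requiring care is the sign of $\nabla_t\Phi$: the terminal condition $V(\cdot,T)=g$ makes this a backward-in-time problem, and it is precisely this time direction that reconciles the two resulting inequalities with the sub/super-solution convention of Defn.~\ref{defn: visocity soln}.

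For uniqueness I would invoke the comparison principle for viscosity solutions, proved by the Crandall--Lions doubling-of-variables technique. Suppose $V_1$ is a sub-solution and $V_2$ a super-solution with $V_1(\cdot,T)\le V_2(\cdot,T)$; one shows $V_1\le V_2$ on $\R^n\times[0,T]$ by supposing $\sup(V_1-V_2)>0$, penalizing with the term $\tfrac{1}{2\eps}\norm{x-y}_2^2$ (plus a localizing term, since the domain is unbounded, to force a maximizer to exist), evaluating the two viscosity inequalities at the doubled maximizer, and deriving a contradiction as $\eps\to0$. Applying this in both orderings to any pair of viscosity solutions gives uniqueness, and hence the claim. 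The structural input that drives the doubling argument is a modulus-of-continuity estimate for the Hamiltonian $H(x,t,p):=\inf_{u\in U}\{c(x,u,t)+p^T f(x,u)\}$, namely
\[
\abs{H(x,t,p)-H(y,t,p)}\le\omega\!\left(\norm{x-y}_2(1+\norm{p}_2)\right)
\]
valid for $(x,y)$ in any bounded set, which follows from local Lipschitz continuity of $c$ and $f$ together with compactness of $U$ (an infimum over $u\in U$ of an $x$-family that is equi-Lipschitz on bounded sets is itself Lipschitz).

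I expect the comparison principle to be the main obstacle. The doubling-of-variables estimate is delicate, and the unboundedness of the spatial domain $\R^n$ is a genuine complication: one must add a localizing/penalizing weight and control the growth of $V_1,V_2$ so that the penalized maxima are attained and converge. By contrast, the existence half is a largely mechanical consequence of the DPP once the sign bookkeeping forced by the terminal condition is handled correctly.
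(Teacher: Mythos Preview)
The paper does not prove this theorem at all: it is stated with attribution to \cite{bressan2011viscosity} and used as a black box, with no accompanying proof or sketch. Your proposal therefore cannot be compared against ``the paper's own proof'' because there is none; what you have outlined is precisely the standard argument one finds in the cited reference (and in Bardi--Capuzzo-Dolcetta, Fleming--Soner, etc.): derive the Dynamic Programming Principle from Defn.~\ref{defn: VF}, use it with test functions touching from above/below to obtain the two viscosity inequalities, and then invoke a comparison principle proved by doubling of variables, using the structural estimate on the Hamiltonian that follows from $c,f\in LocLip$ and compactness of $U$. Your identification of the two delicate points---the sign/time-direction bookkeeping forced by the terminal (rather than initial) condition, and the need for a localizing penalty in the comparison argument because the spatial domain is all of $\R^n$---is accurate, and these are exactly the places where a full write-up would require care.
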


Note that Lemma~\ref{lem: value function is lip} and Theorem~\ref{thm: HJB value function} are only valid in the absence of state constraints ($\Omega = \R^n$). However, as we will show in Lemma~\ref{lem: value function of unconstrained and constrained problem are equal}, if the state constraints are sufficiently ``loose", then the unconstrained and constrained solutions coincide.

\vspace{-0.3cm}
\subsection{VFs Can Construct Optimal Controllers}
\vspace{-0.1cm} Given an OCP, we next show if a ``classical" differentiable solution to the HJB PDE~\eqref{eqn: general HJB PDE} associated with the OCP is known then a solution to the OCP can be constructed using Eqs.~\eqref{u} and \eqref{eqn:k}. We will refer to any $k:\Omega \times [0,T]\to U$ that satisfies Eqs.~\eqref{u} and \eqref{eqn:k} for some $V$ as a controller and say this is the optimal controller of the OCP if $V$ is the VF of the OCP.
\vspace{-0.2cm}
\begin{thm}[\cite{liberzon2011calculus}] \label{thm: value functions construct optimal controllers}
	Consider the family of OCPs associated with tuple $\{c,g,f,\R^n,U,T\} \in \mcl M_{Lip}$. Suppose $V \in C^1(\R^n \times \R, \R)$ solves the HJB PDE \eqref{eqn: general HJB PDE}. Then $\mbf u^*: [t_0,T] \to U$ solves the OCP associated with $\{c,g,f,\R^n,U,T\}$ initialized at $(x_0,t_0) \in \R^n \times [0,T]$ if and only if
	\vspace{-0.1cm}\begin{align} \label{u}
	 &\mbf u ^*(t)= k(\phi_f(x_0,t,\mbf u^*), t) \text{ for all } t \in [t_0,T],\\ \label{eqn:k}
	& \text{where }	k(x,t) \in \arg \inf_{u \in U} \{ c(x,u,t) + \nabla_x V(x,t)^T f(x,u) \}.
		\end{align}
\end{thm}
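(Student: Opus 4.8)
The plan is to use the classical Hamilton--Jacobi verification argument, whose core is to track how $V$ evolves along an arbitrary admissible trajectory. First I would fix $(x_0,t_0)\in\R^n\times[0,T]$ and an admissible input $\mbf u\in\mcl U_{\R^n,U,f,T}(x_0,t_0)$, write $x(t):=\phi_f(x_0,t-t_0,\tau_{t_0}\mbf u)$ for the associated trajectory, and study the scalar map $t\mapsto V(x(t),t)$. Since $V\in C^1(\R^n\times\R,\R)$ and $x(\cdot)$ is absolutely continuous (it solves the ODE for a merely measurable input), this composition is absolutely continuous and the chain rule gives, for a.e. $t\in[t_0,T]$,
\[
\frac{d}{dt}V(x(t),t)=\nabla_t V(x(t),t)+\nabla_x V(x(t),t)^T f(x(t),\mbf u(t)).
\]

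Next I would substitute the HJB PDE~\eqref{eqn: general HJB PDE}, which yields $\nabla_t V(x(t),t)=-\inf_{u\in U}\{c(x(t),u,t)+\nabla_x V(x(t),t)^T f(x(t),u)\}$. Adding $c(x(t),\mbf u(t),t)$ to both sides produces the pointwise identity
\[
\frac{d}{dt}V(x(t),t)+c(x(t),\mbf u(t),t)=\underbrace{\Big[c+\nabla_x V^T f\Big](x(t),\mbf u(t),t)-\inf_{u\in U}\Big[c+\nabla_x V^T f\Big](x(t),u,t)}_{=:R(t)\ge 0},
\]
where $R(t)\ge 0$ because $\mbf u(t)\in U$. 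Integrating from $t_0$ to $T$ and using the terminal condition $V(x(T),T)=g(x(T))$ converts this into
\[
\int_{t_0}^T c(x(t),\mbf u(t),t)\,dt+g(x(T))=V(x_0,t_0)+\int_{t_0}^T R(t)\,dt\ \ge\ V(x_0,t_0),
\]
so $V(x_0,t_0)$ is a uniform lower bound on the OCP objective, attained exactly when $R(t)=0$ for a.e. $t$.

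With this key inequality in hand, both directions follow. For sufficiency ($\Leftarrow$), if $\mbf u^*(t)=k(x^*(t),t)$ with $k$ selecting the arg\,inf, then $R\equiv 0$, so the objective of $\mbf u^*$ equals $V(x_0,t_0)$; since no admissible input can do better, $\mbf u^*$ is optimal. For necessity ($\Rightarrow$), I would first note that, being a $C^1$ (hence classical, hence viscosity) solution of the HJB, $V$ coincides with the VF by the uniqueness result Theorem~\ref{thm: HJB value function}, so by Def.~\ref{defn: VF} the optimal objective equals $V(x_0,t_0)$. If $\mbf u^*$ is optimal its objective equals this value, forcing $\int_{t_0}^T R(t)\,dt=0$; as $R\ge 0$ this gives $R(t)=0$ for a.e. $t$, which is precisely the statement that $\mbf u^*(t)\in\arg\inf_{u\in U}\{c(x^*(t),u,t)+\nabla_x V(x^*(t),t)^T f(x^*(t),u)\}$, i.e. $\mbf u^*(t)=k(x^*(t),t)$.

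I expect the main obstacle to be the measure-theoretic bookkeeping rather than the algebra. Two points need care: (i) justifying the chain rule for $V(x(\cdot),\cdot)$ when $\mbf u$ is only measurable, which rests on absolute continuity of $x(\cdot)$ together with the $C^1$ regularity of $V$; and (ii) reconciling the ``for all $t$'' phrasing of the conclusion with the ``a.e. $t$'' statement the argument naturally delivers, along with the existence of a (measurable) selection $k$ of the arg\,inf --- available because $U$ is compact and $c+\nabla_x V^T f$ is continuous in $u$, so the infimum is attained pointwise. I would also verify carefully that any $C^1$ solution of the HJB equals the VF of Def.~\ref{defn: VF}, since the necessity direction relies on identifying the optimal objective value with $V(x_0,t_0)$.
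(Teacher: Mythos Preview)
The paper does not supply its own proof of this theorem; it is stated with a citation to \cite{liberzon2011calculus} and no proof environment follows. Your proposal is exactly the classical verification argument that appears in that reference: differentiate $V$ along an admissible trajectory, use the HJB equation to produce the nonnegative defect $R(t)$, integrate, and read off both directions from whether $\int R=0$. This is correct and is the standard route, so there is nothing to compare against within the paper itself. Your caveats about the a.e.\ versus ``for all $t$'' phrasing and the need for a measurable selector $k$ are well taken and are precisely the points glossed over in textbook statements of this result.
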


If the function $V$ in Eq.~\eqref{eqn:k} is not a VF the resulting controller may no longer construct a solution to the OCP. In Section~\ref{sec:optimal controller approx} we will provide a bound on the performance of a constructed controller from a candidate VF based on how ``close" the candidate VF is to the true VF under the Sobolev norm.

\vspace{-0.2cm}
\section{The Feasibility Problem Of Finding VFs} \label{sec: feasibiity problem} 
Consider a family of OCPs associated with some $\{c,g,f,\Omega,U,T\} \in \mcl M_{Lip}$. Previously it was shown in Theorem~\ref{thm: value functions construct optimal controllers} that if $V \in C^1(\R^n \times \R, \R)$ is a solution to the HJB PDE~\eqref{eqn: general HJB PDE} then $V$ may be used to solve the family of OCPs using Eqs.~\eqref{u} and \eqref{eqn:k}. The question, now, is how to find such a $V$.

Let us consider the problem of finding a value function as an optimization problem subject to constraints imposed by the HJB PDE~\eqref{eqn: general HJB PDE}. This yields the following feasibility problem:
\vspace{-0.55cm}\begin{align} \label{feas: find VF}
& \text{Find } V \in C^1(\R^n \times \R, \R), \\ \nonumber
& \qquad \qquad \text{such that } V \text{ satisfies } \eqref{eqn: general HJB PDE}.
\end{align}
Note that our optimization problem of Form~\eqref{feas: find VF} is non-convex and may not even have a solution with sufficient regularity. For these reasons, we next propose a convex relaxation of Problem~\eqref{feas: find VF}. We first define sub-VFs and super-VFs that uniformly bound VFs either from above or bellow.

\vspace{-0.1cm}	
\begin{defn} \label{defn: sub and super value functions}
	We say the function $J: \R^n \times \R \to \R$ is a sub-VF to the family of OCPs associated with $\{c,g,f,\Omega,U,T\} \in \mcl M_{Lip}$ if
	
	\vspace{-0.8cm}
	\begin{align*}
	J(x,t) \le V^*(x,t) \text{ for all } t \in [0,T] \text{ and } x \in \Omega,
	\end{align*}
	for any $V^*$ satisfying Eq.\eqref{opt: optimal control general}. Moreover if
	\vspace{-0.15cm}\begin{align*}
	J(x,t) \ge V^*(x,t) \text{ for all } t \in [0,T] \text{ and } x \in \Omega,
	\end{align*}
	for any $V^*$ satisfying Eq.~\eqref{opt: optimal control general}, we say $J$ is a super-VF.
\end{defn}
\vspace{-0.6cm}
\subsection{A Sufficient Condition For A Function To Be A Sub-VF}

We now propose ``dissipation" inequalities and show that if a differentiable function satisfies such inequalities then it must be a sub-value function.

\vspace{-0.1cm}
\begin{prop} \label{prop: diss ineq implies lower soln}
	For given $\{c,g,f,\Omega,U,T\} \in \mcl M_{Lip}$ suppose $J \in C^1(\R^n \times \R, \R)$ satisfies $\text{for all } (x,u,t) \in \Omega \times U \times (0,T)$
	\begin{align} \label{ineq: diss ineq for sub sol of HJB}
	& \nabla_t J(x,t) + c(x,u,t) + \nabla_x J(x,t)^T f(x,u) \ge 0, \\ \label{ineq: BC}
	& J(x,T) \le g(x).
	\end{align}
	Then $J$ is a sub-value function of the family of OCPs associated with $\{c,g,f,\Omega,U,T\}$.
\end{prop}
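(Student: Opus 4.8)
The plan is to show directly that $J(x,t)$ lies below the cost functional of Eqn.~\eqref{opt: optimal control general} for \emph{every} admissible input, and then to pass to the infimum. Fix $(x,t) \in \Omega \times [0,T]$. If $\mcl U_{\Omega,U,f,T}(x,t) = \emptyset$ then $V^*(x,t)=\infty$ by convention and the desired bound $J(x,t)\le V^*(x,t)$ is trivial, so assume an admissible input exists. Pick any $\mbf u \in \mcl U_{\Omega,U,f,T}(x,t)$ and let $x(s) := \phi_f(x, s-t, \tau_t \mbf u)$ be the associated trajectory, which by Definition~\ref{defn: soln map} satisfies $x(t)=x$, $x(s)\in\Omega$ for all $s\in[t,T]$, and $\dot{x}(s)=f(x(s),\mbf u(s))$ for almost every $s\in[t,T]$.

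The key step is to differentiate the composite map $s \mapsto J(x(s),s)$ along this trajectory. Since $J \in C^1(\R^n \times \R, \R)$ and $x(\cdot)$ is absolutely continuous, the chain rule gives, for almost every $s \in (t,T)$,
\begin{align*}
\frac{d}{ds} J(x(s),s) = \nabla_t J(x(s),s) + \nabla_x J(x(s),s)^T f(x(s),\mbf u(s)).
\end{align*}
Because $x(s)\in\Omega$, $\mbf u(s)\in U$, and $s\in(0,T)$, I may apply the dissipation inequality~\eqref{ineq: diss ineq for sub sol of HJB} with $u=\mbf u(s)$ at the point $(x(s),s)$ to obtain $\tfrac{d}{ds}J(x(s),s) \ge -c(x(s),\mbf u(s),s)$ almost everywhere. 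Integrating over $[t,T]$ via the fundamental theorem of calculus for absolutely continuous functions yields
\begin{align*}
J(x(T),T) - J(x,t) \ge -\int_t^T c(x(s),\mbf u(s),s)\, ds.
\end{align*}

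I would then invoke the terminal inequality~\eqref{ineq: BC}, namely $J(x(T),T)\le g(x(T))$, to rearrange the last display into
\begin{align*}
J(x,t) \le \int_t^T c(x(s),\mbf u(s),s)\, ds + g(x(T)),
\end{align*}
whose right-hand side is exactly the cost functional of Eqn.~\eqref{opt: optimal control general} evaluated at $\mbf u$. Since $\mbf u \in \mcl U_{\Omega,U,f,T}(x,t)$ was arbitrary, taking the infimum over all admissible inputs gives $J(x,t)\le V^*(x,t)$, and since $(x,t)\in\Omega\times[0,T]$ was arbitrary this establishes that $J$ is a sub-VF.

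The main obstacle is the regularity bookkeeping in the key step: because $\mbf u$ is only measurable, the trajectory $x(\cdot)$ is merely absolutely continuous rather than $C^1$, so the chain-rule identity and the fundamental theorem of calculus must be justified in the almost-everywhere/absolutely-continuous sense rather than classically. I would handle this by noting that $s\mapsto J(x(s),s)$ is a $C^1$ function composed with an absolutely continuous one, hence itself absolutely continuous, so its a.e. derivative integrates back to the difference of its endpoint values. A second, easily overlooked point is that the \emph{pointwise} inequality~\eqref{ineq: diss ineq for sub sol of HJB} may be applied along the trajectory only because admissibility guarantees $x(s)\in\Omega$ for all $s\in[t,T]$; this is precisely where the state-constraint membership built into Definition~\ref{defn: soln map} is used.
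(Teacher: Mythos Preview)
Your proof is correct and follows essentially the same route as the paper: apply the dissipation inequality along an arbitrary admissible trajectory, use the chain rule to rewrite the left-hand side as $\tfrac{d}{ds}J(x(s),s)+c(x(s),\mbf u(s),s)\ge 0$, integrate over $[t,T]$, invoke the terminal condition, and pass to the infimum. If anything, you are more careful than the paper about the absolute-continuity bookkeeping needed to justify the chain rule and the fundamental theorem of calculus when $\mbf u$ is merely measurable.
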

\begin{proof}
	Suppose $J \in C^1(\R^n \times \R, \R)$ satisfies Eqs.~\eqref{ineq: diss ineq for sub sol of HJB} and \eqref{ineq: BC}. Consider an arbitrary $(x_0,t_0) \in \Omega \times [0,T]$. If $\mcl U_{\Omega,U,f,T}(x_0,t_0) = \emptyset$ then $V^*(x_0,t_0) = \infty$. Clearly in this case $J(x_0,t_0) < V^*(x_0,t_0)$ as $J$ is continuous and therefore is finite over the compact region ${\Omega} \times [0,T]$. Alternatively if $\mcl U_{\Omega,U,f,T}(x_0,t_0) \ne \emptyset$, then for any $\mbf{\tilde{u}} \in \mcl U_{\Omega,U,f,T}(x_0,t_0)$, we have the following by Defn.~\ref{defn: soln map}:
	\begin{align*}
	 & \phi_f(x_0,t-t_0, \tau_{t_0} \mbf{\tilde{u}} ) \in \Omega \text{ for all } t \in  [t_0,T],\\
	 & \mbf{\tilde{u}} (t)\in U \text{ for all } t \in   [t_0,T].
	\end{align*}
	Therefore (using the shorthand $\tilde{x}(t):=\phi_f(x_0,t-t_0, \tau_{t_0} \mbf{\tilde{u}} )$), by Eq.~\eqref{ineq: diss ineq for sub sol of HJB} we have $\text{for all } t \in [t_0,T]$
	\begin{align*}
	& \nabla_t J(\tilde{x}(t),t) + c(\tilde{x}(t), \mbf{\tilde{u}} (t),t) + \nabla_x J(\tilde{x}(t),t)^T f(\tilde{x}(t), \mbf{\tilde{u}}(t) ) \ge 0.
	\end{align*}
Now, using the chain rule we deduce
		\begin{align*}
	& \frac{d}{dt} J(\tilde{x}(t),t) + c(\tilde{x}(t), \mbf{\tilde{u}} (t),t)  \ge 0 \text{ for all } t \in [t_0,T].
	\end{align*}
	Then, integrating over $t \in [t_0,T]$, and since $J(\tilde{x}(T),T) \le g(\tilde{x}(T))$ by Eq.~\eqref{ineq: BC}, we have
	\begin{align} \label{above ineq}
	J(x_0,t_0) \le \int_{t_0}^{T} & c(\tilde{x}(t),\mbf{\tilde{u}}(t),t) dt + g(\tilde{x}(T)).
	\end{align}
	Since Eq.~\eqref{above ineq} holds for all $\mbf{\tilde{u}} \in \mcl U_{\Omega,U,f,T}(x_0,t_0)$, we may take the infimum over $\mcl U_{\Omega,U,f,T}(x_0,t_0)$ to show that $J(x_0,t_0) \le V^*(x_0,t_0)$. As this argument can be used for any $(x_0,t_0) \in \Omega \times [0,T]$ it follows that $J$ is a sub-value function.
\end{proof}

\begin{defn} \label{defn: dissipative}
 For given $\{c,g,f,\Omega,U,T\} \in \mcl M_{Lip}$ we say a function $J \in C^1(\R^n \times \R, \R)$ is \textit{dissipative} if it satisfies Inequalities \eqref{ineq: diss ineq for sub sol of HJB} and \eqref{ineq: BC}.
\end{defn}
Dissipative functions are viscosity sub-solutions (as per Defn.~\ref{defn: visocity soln}) to the HJB PDE~\eqref{eqn: general HJB PDE}. Moreover, by Prop.~\ref{prop: diss ineq implies lower soln} a dissipative function is a sub-VF. However, a sub-VF need not be dissipative or a viscosity sub-solution to the HJB PDE.

\vspace{-0.25cm}
\subsection{A Convex Relaxation Of The Problem Of Finding VFs}

The set of functions satisfying Eqs.~\eqref{ineq: diss ineq for sub sol of HJB} and \eqref{ineq: BC} is convex as Eqs.~\eqref{ineq: diss ineq for sub sol of HJB} and \eqref{ineq: BC} are linear in terms of the unknown variable/function $J$. Furthermore, for given $\{c,g,f,\Omega,U,T\} \in \mcl M_{Lip}$, any function which satisfies the HJB PDE \eqref{eqn: general HJB PDE} also satisfies Eqs.~\eqref{ineq: diss ineq for sub sol of HJB} and \eqref{ineq: BC}. This allows us to propose the following convex relaxation of the problem of finding a VF (Problem~\eqref{feas: find VF}):

\vspace{-0.6cm}
\begin{align} \label{feas: convex  find VF}
& \text{Find } J \in C^1(\R^n \times \R, \R), \\ \nonumber
& \qquad \qquad \text{such that } J \text{ satisfies } \eqref{ineq: diss ineq for sub sol of HJB} \text{ and } \eqref{ineq: BC}.
\end{align}
%

\vspace{-0.5cm}
\subsection{A Polynomial Tightening Of The Problem Of Finding VFs} \vspace{-0.05cm}
Problem~\eqref{feas: convex  find VF} is convex. However, a function $J$, feasible for Problem~\eqref{feas: convex  find VF} (and hence dissipative), may be arbitrarily far from the VF. For instance, in the case $c(x,u,t) \ge 0$ and  $0 \le g(x)  <M$, the constant function $J(x,t)\equiv - C$ is dissipative for any $C>M$. Thus, by selecting sufficiently large enough $C>M$, we can make $||J -V||$ arbitrary large, regardless of the chosen norm,  $|| \cdot ||$. 

To address this issue, we propose a modification of Problem~\eqref{feas: convex  find VF}, wherein we include an objective of Form~$ \int_{\Lambda \times [0,T]} w(x,t) J(x,t) dx dt$, parameterized by a compact domain of interest $\Lambda \subset \R^n$ and weight $w \in L^1 (\Lambda \times [0,T], \R^+)$ (we use the weight, $w$, in Prop.~\ref{prop: SOS converges Dv}). Specifically, for given $\{c,g,f,\Omega,U,T\} \in \mcl M_{Lip}$ and $d \in \N$, consider the optimization problem:

\vspace{-0.8cm}
\begin{align} \label{opt: convex L1 norm}
&J_d \in \arg \max_{J \in \mcl P_d(\R^n \times \R, \R)} \int_{\Lambda \times [0,T]} w(x,t) J (x,t) dx dt\\ \nonumber
& \text{subject to: } 	 \nabla_t J(x,t) + c(x,u,t) + \nabla_x J(x,t)^T f(x,u) > 0 \\ \nonumber
& \hspace{2.75cm}  \text{ for all } x \in \Omega, t \in (0,T), u \in U, \\ \nonumber
& \hspace{1.65cm} J (x,T) < g(x) \text{ for all } x \in \Omega.
\end{align}

\vspace{-0.1cm}
{Maximizing} $ \int_{\Lambda \times [0,T]} w(x,t) J(x,t) dx dt$ minimizes the weighted $L^1$ norm $ \int_{\Lambda \times [0,T]} w(x,t) |V(x,t)-J(x,t)| dx dt$. The restriction to polynomial solutions $J \in \mcl P_d(\R^n \times \R, \R)$ makes the problem finite-dimensional.

%

%

\vspace{-0.4cm}

\section{A Sequence Of Dissipative Polynomials That Converge To The VF In Sobolev Space} \label{Section: mollification and polynomial approx}
For a given $\{c,g,f,\Omega,U,T\} \in \mcl M_{Lip}$, in Eq.~\eqref{opt: convex L1 norm}, we proposed a sequence of optimization problems, indexed by $d \in \N$, each instance of which yields a dissipative function $J_d \in \mcl P_d(\R^n \times \R, \R)$. In this section, we prove that {$\lim_{d\rightarrow \infty}\norm{J_d-V}_{L^1(\Lambda \times (0,T), \R)}\rightarrow 0$} where $V$ is the VF associated with the OCP $\{c,g,f,\Omega,U,T\} \in \mcl M_{Lip}$. To accomplish this proof, we divide the section into three subsections, wherein we find the following.
\begin{enumerate}[label=(\Alph*)]
	\item {In Prop.~\ref{prop:sontag smooth approx} we show that for any $V \in Lip(\Omega \times [0,T],\R)$ that satisfies the dissipation-type inequality in Eq.~\eqref{diss ineq ae} and any $\eps>0$ there exists a dissipative function $J_\eps \in C^\infty(\Omega \times [0,T], \R)$  such that $|| J_\eps-V||_{W^{1,p}(\Omega \times [0,T],\R)}< \eps$.}
	\item In Theorem~\ref{thm: approx true value functions} we show that for every $\eps>0$, there exists $d \in \N$ and dissipative $P_\eps \in \mcl P_d(\R^n \times \R, \R)$ such that $||P_\eps- V||_{W^{1,p}(\Omega \times [0,T],\R)} < \eps$, for any value function, $V$, associated with $\{c,g,f,\Omega,U,T\} \in \mcl M_{Lip}$.
	\item For any positive weight $w$, Prop.~\ref{prop: convergence of inf L1} shows that if $J_d$ solves \eqref{opt: convex L1 norm} for $d \in \N$, then {$\lim_{d \to \infty} {||w(J_d - V)||_{L^1(\Lambda \times (0,T), \R)}}=0$} for any VF, $V$, associated with $\{c,g,f,\Omega,U,T\} \in \mcl M_{Lip}$.
\end{enumerate}
 \vspace{-0.4cm}
\subsection{Existence Of Smooth Dissipative Functions That Approximate The VF Arbitrarily Well Under The $W^{1,p}$ Norm}
In this section we create a sequence of smooth (elements of $C^\infty(\R^n \times \R, \R)$) functions that converges, with respect to the $W^{1,p}$ norm, to any Lipschitz function, $V$, satisfying the dissipation-type inequality in Eq.~\eqref{diss ineq ae}. This subsection uses some aspects of mollification theory. For an overview of this field, we refer to~\cite{evans2010partial}.

\paragraph{Mollifiers}
The standard mollifier, $\eta \in C^\infty(\R^n \times \R, \R)$ is defined as

\vspace{-0.7cm}
\begin{align} \label{fun: mollifier}
\eta(x,t):=\begin{cases}
C \exp\left(\frac{1}{||(x,t)||_2^2 -1}\right) \quad \text{when } ||(x,t)||_2<1,\\
0 \quad \text{when } ||(x,t)||_2 \ge 1,
\end{cases}
\end{align}
where $C>0$ is chosen such that $\int_{\R^n \times \R} \eta(x,t) dx dt =1$.

For $\sigma>0$ we denote the scaled standard mollifier by $\eta_\sigma \in C^\infty(\R^n \times \R, \R)$ such that

\vspace{-0.55cm}
\begin{equation*}
\eta_\sigma(x,t):= \frac{1}{\sigma^{n+1}} \eta\left(\frac{x}{\sigma}, \frac{t}{\sigma}\right).
\end{equation*} Note, clearly $\eta_\sigma(x,t)=0$ for all $(x,t) \notin B(0,\sigma)$.

\paragraph{Mollification of a Function (Smooth Approximation)}
Recall from Section \ref{sec: notation B} that for open sets $\Omega \subset \R^n$, $(0,T) \subset \R$, and $\sigma>0$ we denote $<\Omega \times (0,T)>_\sigma:=\{x \in \Omega \times (0,T): B(x,\sigma) \subset \Omega \times (0,T) \}$. Now, for each $\sigma>0$ and function $V \in L^1(\Omega \times (0,T), \R)$ we denote the \textit{$\sigma$-mollification} of $V$ by $[V]_\sigma: <\Omega \times (0,T)>_\sigma \to \R$, where
\vspace{-0.1cm}
\begin{align}
[V]_\sigma(x,t) & := \int_{\R^n \times \R} \eta_\sigma(x-z_1,t-z_2)V(z_1,z_2)dz_1 dz_2\\ \nonumber
& =\int_{B(0,\sigma)} \eta_\sigma(z_1,z_2) V(x-z_1,t-z_2) dz_1 dz_2   .
\end{align}
To calculate the derivative of a mollification we next introduce the concept of weak derivatives.
\begin{defn} \label{def: weak deriv}
	For $\Omega \subset \R^n$ and $F \in L^1(\Omega, \R)$ we say any $H \in L^1(\Omega, \R)$ is the weak $i \in \{1,..,n\}$-partial derivative of $F$ if
	
	\vspace{-0.55cm}
	\begin{align*}
	\int_{\Omega} F(x) \frac{\partial}{\partial x_i} \alpha(x) dx = - \int_{\Omega} H(x) \alpha(x) dx,  \text{ for } \hspace{-0.05cm} \alpha \in C^\infty(\R^n,\R).
	\end{align*}
\end{defn}
Weak derivatives are ``essentially unique". That is if $H_1$ and $H_2$ are both weak derivatives of a function $F$ then the set of points where $H_1(x) \ne H_2(x)$ has measure zero. If a function is differentiable then its weak derivative is equal to its derivative in the ``classical" sense. We will use the same notation for the derivative in the ``classical" sense and in the weak sense.


In the next proposition we state some useful properties about Sobolev spaces and mollifications taken from \cite{evans2010partial}.

\begin{prop}[\cite{evans2010partial}] \label{prop:mollification}
	For $1 \le p < \infty$ and $k \in \N/\{0\}$ we consider $V \in W^{k,p}(E, \R)$, where $E\subset \R^{n+1}$ is an open bounded set, and its $\sigma$-mollification $[V]_\sigma$. Recalling from Section \ref{sec: notation B} that for an open set $\Omega \subset \R^n$ and $\sigma>0$ we denote $<\Omega>_\sigma:=\{x \in \Omega: B(x,\sigma) \subset \Omega \}$, the following holds:
	\begin{enumerate}
		\item For all $\sigma>0$ we have $[V]_\sigma \in C^\infty( <E>_\sigma, \R )$.
		\item For all $\sigma>0$ we have $\nabla_t [V]_\sigma(x,t)= [ \nabla_t  V]_\sigma(x,t)$ and $\nabla_x [V]_\sigma(x,t)= [ \nabla_x  V]_\sigma(x,t)$ for $(x,t) \in {<E>_\sigma}$, where  $\nabla_t  V$ and $\nabla_x V$ are weak derivatives.
		\item  If $V \in C(E,\R)$ then for any compact set $K \subset E$ we have $\lim_{\sigma \to 0}\sup_{(x,t) \in K}| V(x,t)- [V]_\sigma(x,t)|=0$.
		\item (Meyers-Serrin Local Approximation) For any compact set $K \subset E$ we have $\lim_{\sigma \to 0}\norm{ [V]_\sigma-V}_{W^{k,p}(K,\R)}=0$.
	\end{enumerate}
\end{prop}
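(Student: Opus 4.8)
The plan is to treat the four claims separately, all resting on the single fact that $[V]_\sigma$ is the convolution of $V$ against the smooth, nonnegative, compactly supported kernel $\eta_\sigma$ (supported in $B(0,\sigma)$ and satisfying $\int \eta_\sigma = 1$), together with elementary properties of convolutions. Since these are classical facts, my aim is to exhibit the mechanism behind each claim rather than to reproduce every estimate.

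For Claim (1), I would establish smoothness by a difference-quotient argument showing that each partial derivative can be transferred onto the kernel, i.e. $D^\alpha [V]_\sigma = (D^\alpha \eta_\sigma) * V$ for every multi-index $\alpha$. Writing $[V]_\sigma(x,t) = \int_{\R^n \times \R} \eta_\sigma((x,t)-z) V(z)\,dz$, I would form the difference quotient in one coordinate, use the mean value theorem to dominate the integrand by a fixed integrable function (recall $\eta_\sigma \in C^\infty$ has all derivatives bounded and compactly supported, and $V \in L^1$), and pass the limit inside by dominated convergence. Iterating over all orders gives $[V]_\sigma \in C^\infty(<E>_\sigma,\R)$; the restriction to $<E>_\sigma$ is exactly what guarantees that $V$ is defined on the whole support $B((x,t),\sigma)$ of the shifted kernel.

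For Claim (2), the key observation is that, for fixed $(x,t)\in <E>_\sigma$, the map $z \mapsto \eta_\sigma((x,t)-z)$ is a smooth test function whose support $B((x,t),\sigma)$ lies inside $E$, and that differentiating the kernel in the $(x,t)$-variable equals, up to sign, differentiating in the integration variable $z$ by the chain rule. I would differentiate under the integral (justified as in Claim (1)), transfer the derivative onto $V$ via the definition of the weak derivative (Defn.~\ref{def: weak deriv}), and note that the two sign changes cancel, yielding $\nabla_x [V]_\sigma = [\nabla_x V]_\sigma$ and likewise $\nabla_t [V]_\sigma = [\nabla_t V]_\sigma$. The hypothesis $V \in W^{k,p}$ enters precisely here: the weak-derivative identity is what licenses moving the derivative from the test function onto $V$.

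For Claim (3), I would estimate $\abs{[V]_\sigma(x,t) - V(x,t)} \le \int_{B(0,\sigma)} \eta_\sigma(z)\,\abs{V((x,t)-z) - V(x,t)}\,dz$, using $\int \eta_\sigma = 1$ and $\eta_\sigma \ge 0$; since $V \in C(E,\R)$ is uniformly continuous on a compact neighborhood of $K$, the integrand is uniformly small over $(x,t)\in K$ and $\abs{z}<\sigma$ once $\sigma$ is small, giving uniform convergence. For Claim (4), I would combine Claims (1)--(2): because $D^\alpha [V]_\sigma = [D^\alpha V]_\sigma$ for $\abs{\alpha}\le k$, it suffices to show $[W]_\sigma \to W$ in $L^p(K)$ for each $W = D^\alpha V \in L^p$. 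The main obstacle is that $W$ is merely $L^p$, not continuous, so Claim (3) does not apply directly; I would resolve this with a standard $\eps/3$ argument, approximating $W$ in $L^p$ by a continuous $g$, applying Claim (3) to $g$, and controlling the remaining error through Young's inequality $\norm{[W-g]_\sigma}_{L^p} \le \norm{W-g}_{L^p}$ (valid since $\norm{\eta_\sigma}_{L^1}=1$). Summing over $\abs{\alpha}\le k$ then yields $W^{k,p}$ convergence on $K$.
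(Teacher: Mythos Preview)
Your proposal is correct and follows the standard mollification argument from Evans~\cite{evans2010partial}. Note, however, that the paper does not actually supply a proof of this proposition: it is stated with a citation to~\cite{evans2010partial} and used as a black box, so there is no ``paper's own proof'' to compare against beyond the reference itself.
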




\paragraph{Approximation of Lipschitz functions satisfying a dissipation-type inequality} We now show that for any Lipschitz function, $V$, satisfying the dissipation-type inequality in Eq.~\eqref{diss ineq ae}, $V$ can be approximated arbitrarily well by a smooth function, $J_\eps$, that also satisfies the dissipation-type inequality in Eq.~\eqref{diss ineq ae}. {We use a similar proof strategy first appearing in~\cite{kurzwel1963inversion} and also later appearing in~\cite{wilson1969smoothing,lin1996smooth,teel2000smooth}}.
\begin{lem} \label{lem: sontag smooth approx}
	Let $E \subset \R^{n+1}$ be an open bounded set, $ \Omega \subset \R^n $ be such that $\Omega \times (0,T) \subseteq E$, where $T>0$, $U \subset \R^m$ be a compact set, $f \in Lip( \Omega \times U, \R^n)$, $c \in Lip(\Omega\times U \times [0,T], \R)$, and $V \in Lip(E,\R)$ such that	
	\vspace{-0.2cm}\begin{align} \label{diss ineq ae}
	 \essinf_{(x,t) \in \Omega \times (0,T)} \hspace{-0.12cm} \{ \hspace{-0.0cm} \nabla_t V(x,t) + & \nabla_xV(x,t)^T \hspace{-0.07cm} f(x,u) + c(x,u,t) \hspace{-0.0cm} \} \hspace{-0.05cm} \ge \hspace{-0.05cm} 0,
	\end{align}
	where the derivatives, $\nabla_t V$ and $\nabla_x V$, are weak derivatives.
	
	Then for any compact set $K \subset E$, $1 \le p <\infty$ and for all $\eps>0$ there exists $J_\eps \in C^{\infty}(K, \R)$ such that 	
	\vspace{-0.05cm}\begin{align} \label{compact close smooth}
	&  ||V - J_\eps||_{W^{1,p}(K,\R)}<\eps \text{ and } \hspace{-0.05cm} \sup_{(x,t) \in K} \hspace{-0.05cm} |V(x,t) - J_\eps(x,t) |<\eps,
	\end{align}
	
	\vspace{-0.25cm}
	
	and for all $(x,t)\in K \cap (\Omega \times (0,T))$ and $ u \in U$
	\vspace{-0.05cm}\begin{align} \label{compact diss ineq}
	\nabla_t & J_\eps(x,t) + \nabla_x J_\eps(x,t)^Tf(x,u) + c(x,u,t) \ge -\eps.
	\end{align}
\end{lem}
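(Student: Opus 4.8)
The plan is to take $J_\eps$ to be the $\sigma$-mollification $[V]_\sigma$ of $V$ for a sufficiently small $\sigma>0$, and to verify the three required properties by combining the mollification identities of Prop.~\ref{prop:mollification} with the \emph{uniform} Lipschitz bounds on $f$, $c$, and $\nabla_x V$. Since $K\subset E$ is compact and $E$ is open, we have $K\subset\,<E>_\sigma$ for all small $\sigma$, so by Prop.~\ref{prop:mollification}(1) the function $[V]_\sigma$ is well defined and $C^\infty$ on a neighborhood of $K$; setting $J_\eps:=[V]_\sigma|_K$ then gives $J_\eps\in C^\infty(K,\R)$.

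First I would dispatch the two closeness estimates in Eqn.~\eqref{compact close smooth}. Because $V\in Lip(E,\R)$ with $E$ bounded, $V$ is continuous and $V\in W^{1,p}(E,\R)$ for every $1\le p<\infty$; hence Prop.~\ref{prop:mollification}(3) gives $\sup_{(x,t)\in K}|V-[V]_\sigma|\to 0$ and Prop.~\ref{prop:mollification}(4) gives $\norm{[V]_\sigma-V}_{W^{1,p}(K,\R)}\to 0$ as $\sigma\to 0$, so small $\sigma$ makes both quantities $<\eps$.

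The substance is the approximate dissipation inequality Eqn.~\eqref{compact diss ineq}. By Prop.~\ref{prop:mollification}(2), $\nabla_t[V]_\sigma=[\nabla_t V]_\sigma$ and $\nabla_x[V]_\sigma=[\nabla_x V]_\sigma$, so the left-hand side equals $[\nabla_t V]_\sigma(x,t)+[\nabla_x V]_\sigma(x,t)^Tf(x,u)+c(x,u,t)$, i.e.\ the $\eta_\sigma$-average over $z=(z_1,z_2)\in B(0,\sigma)$ of $\nabla_t V(x-z_1,t-z_2)+\nabla_x V(x-z_1,t-z_2)^Tf(x,u)+c(x,u,t)$. I would first extract from Eqn.~\eqref{diss ineq ae} a single full-measure set $S\subset\Omega\times(0,T)$ on which the dissipation inequality holds for \emph{every} $u\in U$: take a countable dense $\{u_k\}\subset U$, intersect the countably many null sets, and pass to all $u$ using continuity of $f(x,\cdot)$ and $c(x,\cdot,t)$ and boundedness of $\nabla V$. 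At a shifted point $(x-z_1,t-z_2)\in S$ the inequality then gives $\nabla_t V(x-z_1,t-z_2)+\nabla_x V(x-z_1,t-z_2)^Tf(x-z_1,u)+c(x-z_1,t-z_2,u)\ge 0$, and I would replace $f(x-z_1,u)$ by $f(x,u)$ and $c(x-z_1,t-z_2,u)$ by $c(x,u,t)$ at the cost of two error terms: using the uniform constants $K_f,K_c$ and $L_V:=\norm{\nabla_x V}_{L^\infty}$, one has $|\nabla_x V(x-z_1,t-z_2)^T(f(x,u)-f(x-z_1,u))|\le L_V K_f\,||z_1||_2\le L_V K_f\sigma$ and $|c(x,u,t)-c(x-z_1,t-z_2,u)|\le K_c\,||(z_1,z_2)||_2\le K_c\sigma$. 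Integrating against $\eta_\sigma$ (unit mass, support in $B(0,\sigma)$) bounds the left-hand side of Eqn.~\eqref{compact diss ineq} below by $-(L_V K_f+K_c)\sigma$, and taking $\sigma<\eps/(L_V K_f+K_c)$ closes this step.

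The main obstacle is that the averaging above is valid only when the entire mollifier support $B((x,t),\sigma)$ lies inside $\Omega\times(0,T)$, so that every shifted point $(x-z_1,t-z_2)$ lies in $S$ (where both the dissipation inequality holds and $f,c$ are defined). For $(x,t)\in K\cap(\Omega\times(0,T))$ within distance $\sigma$ of $\partial(\Omega\times(0,T))$, part of the support escapes the dissipation region and there the integrand admits only an $O(1)$ lower bound, which would not close the estimate; this boundary layer is the delicate point. I would resolve it by taking $\sigma$ smaller than the distance from the relevant points to $\partial(\Omega\times(0,T))$, which is available in the intended application where $V$ is a value function satisfying the dissipation inequality on a region strictly containing $K$, so that $K\cap(\Omega\times(0,T))$ sits at positive distance from the boundary of the dissipation region. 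The final $\sigma$ is then chosen small enough to satisfy simultaneously the two closeness bounds, the inequality $(L_V K_f+K_c)\sigma<\eps$, and the support-containment condition, completing the proof.
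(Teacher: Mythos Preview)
Your approach is essentially the paper's: set $J_\eps=[V]_\sigma$, use Prop.~\ref{prop:mollification}(3)(4) for the two closeness bounds, use Prop.~\ref{prop:mollification}(2) to pass the derivatives inside the mollifier, then add and subtract $f(x-z_1,u)$ and $c(x-z_1,u,t-z_2)$ inside the convolution and control the resulting error terms via the Lipschitz constants of $f,c$ and the Rademacher bound $\|\nabla_x V\|_{L^\infty}\le L_V$, arriving at a lower bound of the form $-(\text{const})\,\sigma$. Two minor remarks. First, your countable-dense-subset step to manufacture a single full-measure set $S$ on which the inequality holds for all $u$ is unneeded: the paper simply bounds the $\eta_\sigma$-integral below by the essential infimum of the integrand over $B(0,\sigma)$ and applies Eqn.~\eqref{diss ineq ae} directly. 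Second, the boundary-layer issue you flag---that for $(x,t)\in K\cap(\Omega\times(0,T))$ near $\partial(\Omega\times(0,T))$ the shifted point $(x-z_1,t-z_2)$ may exit the region where $f,c$ are defined and Eqn.~\eqref{diss ineq ae} holds---is genuine, and the paper's own proof passes over it silently; your proposed resolution (assume $K$ sits at positive distance from the boundary of the dissipation region) is not guaranteed by the hypotheses as stated, so on this point you and the paper are in the same position.
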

\vspace{-0.1cm}
\begin{proof}
	Suppose $V$ satisfies Eq.~\eqref{diss ineq ae}, $K \subset E$ is a compact set, $1 \le p < \infty$, and $\eps>0$. By Rademacher's Theorem (Theorem~\ref{thm: Rademacher theorem}) $V$ is weakly differentiable with essentially bounded derivative. Therefore $V \in W^{1,\infty}(E, \R)$ and hence $V \in W^{1,p}(E, \R)$. Now Prop.~\ref{prop:mollification} (Statements 3 and 4) can be used to show there exists $\sigma_1 >0$ such that for any $0 \le \sigma<\sigma_1$ we have
	\begin{align} \label{compact close smooth proof}
	 ||V - [V]_{\sigma_1} ||_{W^{1,p}(K,\R)}<\eps \text{ and } \hspace{-0.05cm} \sup_{(x,t) \in K} \hspace{-0.1cm}|V(x,t) - [V]_{\sigma_1}(x,t) |<\eps.
	\end{align}
	{\normalsize Select $\sigma_2>0$ small enough so $K \subset <E>_{\sigma_2}$ (which can be done as $E$ is open). Select $0<\sigma_3<\frac{\eps}{L_V L_f+2L_c}$, where $L_V,L_f,L_c>0$ are the Lipschitz constant of the functions $V$, $f$, and $c$ respectively. We now have the following for all $\sigma_4 < \min\{\sigma_3,\sigma_2\}$, $u \in U$ and $(x,t) \in K \cap (\Omega \times (0,T))$, }
	\vspace{-0.2cm}
	{\small \begin{align}\label{compact diss ineq proof}
		& \nabla_t [V]_{\sigma_4}(x,t) + \nabla_x [V]_{\sigma_4}(x,t)^T f(x,u) + c(x,u,t)\\ \nonumber
		& =  [ \nabla_t V]_{\sigma_4}(x,t) +  [ \nabla_x V]_{\sigma_4}(x,t)^T f(x,u)  + c(x,u,t)\\ \nonumber
		& = \int_{B(0,\sigma_4)} \eta_{\sigma_4}(z_1,z_2)\bigg(\nabla_t V(x-z_1,t-z_2) \\ \nonumber
		& \qquad + \nabla_x V(x-z_1,t-z_2)^Tf(x-z_1,u) + c(x-z_1,u,t-z_2)\bigg)dz_1 dz_2  \\ \nonumber
		& \qquad - \int_{B(0,\sigma_4)} \eta_{\sigma_4}(z_1,z_2) \nabla_x V(x-z_1,t-z_2)^T \\ \nonumber
		& \hspace{4cm} \bigg(  f(x-z_1,u) - f(x,u) \bigg) dz_1 dz_2\\ \nonumber
		& \quad  - \int_{B(0,\sigma_4)} \eta_{\sigma_4}(z_1,z_2)\bigg( c(x-z_1,u,t-z_2)  - c(x,u,t) \bigg) dz_1 dz_2\\ \nonumber
		& \ge \essinf_{(z_1,z_2)\in B(0,\sigma_4)} \bigg\{ \nabla_t V(x-z_1,t-z_2) \\ \nonumber
		& \hspace{1cm} + \nabla_x V(x-z_1,t-z_2)^Tf(x-z_1,u) + c(x-z_1,u,t-z_2)\bigg\}\\ \nonumber
		& - \esssup_{(z_1,z_2)\in B(0,\sigma_4)} \bigg\{||\nabla_x V(x-z_1,t-z_2) ||_2\bigg \}\\ \nonumber  & \hspace{2cm} \esssup_{(z_1,z_2)\in B(0,\sigma_4)} \bigg\{ ||f(x-z_1,u) - f(x,u)||_2 \bigg\}\\ \nonumber
		& -\esssup_{(z_1,z_2)\in B(0,\sigma_4)} \bigg\{  |c(x-z_1,u,t-z_2)  - c(x,u,t)| \bigg\}\\ \nonumber
		& \ge -L_V\esssup_{(z_1,z_2)\in B(0,\sigma_4)} \bigg\{ ||f(x-z_1,u) - f(x,u)||_2 \bigg\}\\ \nonumber
		& \qquad  -\esssup_{(z_1,z_2)\in B(0,\sigma_4)} \bigg\{  |c(x-z_1,u,t-z_2)  - c(x,u,t)| \bigg\}\\ \nonumber
		& \ge -L_V L_f\esssup_{(z_1,z_2)\in B(0,\sigma_4)}\bigg\{ ||z_1||_2 \bigg\}   -L_c\esssup_{(z_1,z_2)\in B(0,\sigma_4)} \bigg\{  ||z_1||_2 + |z_2| \bigg\}\\ \nonumber
		&= -(L_V L_f +2L_c) \sigma_4 \ge -\eps.
		\end{align} }
	
	\vspace{-0.2cm}
	
\noindent	The first equality of Eq.~\eqref{compact diss ineq proof} follows since $\nabla_t [V]_{\sigma_4}(x,t) = [\nabla_t V]_{\sigma_4}(x,t)$ and $\nabla_x [V]_{\sigma_4}(x,t) = [\nabla_x V]_{\sigma_4}(x,t)$ for all $(x,t) \in K \subset <E>_{\sigma_4}$ by Prop.~\ref{prop:mollification} (Statement 2). The first inequality follows by the monotonicity property of integration and the Cauchy Swartz inequality. Since $V$ is Lipschitz $\esssup_{(x,t) \in E} ||\nabla_x V(x,t)||_2< L_V$ by Rademacher's Theorem (Theorem \ref{thm: Rademacher theorem}). Now the second inequality follows by using \eqref{diss ineq ae} together with $\esssup_{(x,t) \in E} ||\nabla_x V(x,t)||_2< L_V$. The third inequality follows by the Lipschitz continuity of $f$ and $c$. Finally the fourth inequality follows by the fact $\sigma_4< \sigma_3< \frac{\eps}{L_VL_f +L_c }$.
	
	Now define $J_\eps(x,t):= [V]_\sigma(x.t)$ where $0<\sigma< \min\{\sigma_1,\sigma_4\}$. It follows that $J_\eps \in C^{\infty}(K, \R)$ by Prop.~\ref{prop:mollification} (Statement 1). Moreover $J_\eps$ satisfies Eqs.~\eqref{compact close smooth} and \eqref{compact diss ineq} by Eqs.~\eqref{compact close smooth proof} and \eqref{compact diss ineq proof}.
\end{proof}
In Lemma~\ref{lem: sontag smooth approx} we showed that for any given function, $V \in Lip(E, \R)$, any compact subsets $K \subset E$, any $\eps>0$, and any $1 \le p <\infty$, there exists a smooth function, $J_\eps$, satisfying Eq.~\eqref{compact diss ineq}, such that $||V-J_\eps||_{W^{1,p}(K, \R)}<\eps$. We next show this ``local" result over compact subsets, $K$, can be extended to a ``global" results over the entire domain, $E$. To do this we use Theorem~\ref{thm: partition of unity}, stated in Section \ref{sec: appendix 3}. Given an open cover of $E$, Theorem~\ref{thm: partition of unity} states that there exists a family of functions, called a partition of unity. In the next proposition we use partitions of unity together with the ``local" approximates of the Lipschitz function, $V$, to construct a smooth ``global" approximation of $V$ over the entire domain $E$.
\begin{prop} \label{prop:sontag smooth approx}
		Let $E \subset \R^{n+1}$ be an open bounded set, $ \Omega \subset \R^n $ be such that $\Omega \times (0,T) \subseteq E$, where $T>0$, $U \subset \R^m$ be a compact set, $f \in Lip( \Omega \times U, \R^n)$, $c \in Lip(\Omega\times U \times [0,T], \R)$, and $V \in Lip(E,\R)$ satisfies Eq.~\eqref{diss ineq ae}.
%
%
	Then for all $1 \le p <\infty$ and $\eps>0$ there exists $J \in C^{\infty}( E , \R)$ such that
	\begin{align} \label{global smooth approx value function}
||V - J||_{W^{1,p}(E,\R)}<\eps \text{ and } \sup_{(x,t) \in E}|V(x,t) - J(x,t) |<\eps,
	\end{align}
	and for all $(x,u,t)\in \Omega \times U
	\times (0,T)$  
	\begin{align} \label{global diss ineq}
	\nabla_t & J(x,t) + \nabla_x J(x,t)^Tf(x,u) + c(x,u,t) \ge -\eps.
	\end{align}
\end{prop}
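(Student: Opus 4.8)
The plan is to globalize the local approximation of Lemma~\ref{lem: sontag smooth approx} by patching together local mollifications with a partition of unity, exploiting the linearity of the dissipation inequality in the unknown function. First I would build a locally finite open cover $\{U_i\}_{i \in \N}$ of $E$ by relatively compact sets (for instance via the standard exhaustion $V_i := \{x \in E : B(x,1/i) \subset E\} \cap B(0,i)$ and $U_i := V_{i+1} \setminus \overline{V_{i-1}}$), arranged so that each point of $E$ has a neighborhood meeting only finitely many $U_i$. Applying Theorem~\ref{thm: partition of unity} to this cover yields a smooth partition of unity $\{\zeta_i\}$ with $\mathrm{supp}\,\zeta_i \subset U_i$, $\zeta_i \ge 0$, and $\sum_i \zeta_i \equiv 1$ on $E$. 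On each compact $K_i := \overline{U_i} \subset E$ I invoke Lemma~\ref{lem: sontag smooth approx} to obtain $J_i \in C^\infty(K_i,\R)$ (a mollification $[V]_{\sigma_i}$ of $V$) satisfying the local closeness Eqn.~\eqref{compact close smooth} and the local dissipation inequality Eqn.~\eqref{compact diss ineq} with a slack $\eps_i > 0$ to be fixed later. I then set $J := \sum_i \zeta_i J_i$, which lies in $C^\infty(E,\R)$ since the sum is locally finite.

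Next I would verify the closeness conclusions Eqn.~\eqref{global smooth approx value function}. The identity $\sum_i \zeta_i \equiv 1$ gives $J - V = \sum_i \zeta_i (J_i - V)$, so at each point $J - V$ is a convex combination of the quantities $J_i - V$; hence $\sup_E |J-V| \le \max_i \sup_{K_i} |J_i - V| < \eps$ once each $\eps_i < \eps$. For the Sobolev bound I expand $\nabla(J-V) = \sum_i \zeta_i \nabla(J_i - V) + \sum_i (\nabla \zeta_i)(J_i - V)$ and bound each group using the local $W^{1,p}$ and sup bounds from Lemma~\ref{lem: sontag smooth approx}; choosing the tolerances summable, e.g. $\eps_i = \eps\, 2^{-i}/(1 + \norm{\nabla \zeta_i}_{L^\infty})$, together with the bounded overlap of the cover, forces $\norm{J - V}_{W^{1,p}(E,\R)} < \eps$.

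The crux, and the main obstacle, is propagating the dissipation inequality Eqn.~\eqref{global diss ineq} through the patching, since the derivatives of the cutoffs produce extra terms. Differentiating the product and grouping, for $(x,u,t) \in \Omega \times U \times (0,T)$ one has $\nabla_t J + \nabla_x J^T f + c = \sum_i \zeta_i \big(\nabla_t J_i + \nabla_x J_i^T f\big) + \sum_i \big(\nabla_t \zeta_i + (\nabla_x \zeta_i)^T f\big) J_i + c$. The first group is controlled by the local inequality Eqn.~\eqref{compact diss ineq}: since $\nabla_t J_i + \nabla_x J_i^T f \ge -c - \eps_i$ with $\zeta_i \ge 0$ and $\sum_i \zeta_i = 1$, it is bounded below by $-c - \sup_i \eps_i$, so together with the trailing $+c$ it contributes at least $-\sup_i \eps_i$. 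The second group is handled using $\sum_i \nabla_t \zeta_i = \sum_i \nabla_x \zeta_i = 0$ (differentiating $\sum_i \zeta_i \equiv 1$): this lets me subtract $V$ from each $J_i$ without changing the sum, yielding $\sum_i \big(\nabla_t \zeta_i + (\nabla_x \zeta_i)^T f\big)(J_i - V)$, whose absolute value is small because each $|J_i - V| \le \eps_i$ and, at any point, only finitely many terms are active, with $\norm{\nabla \zeta_i}_{L^\infty}$ and $\sup_{K_i} \norm{f}_2$ finite.

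The delicate point, which dictates the order of the argument, is that the slacks $\eps_i$ must be chosen \emph{after} the partition of unity, so that they can be scaled against $\norm{\nabla \zeta_i}_{L^\infty}$ and $\sup_{K_i} \norm{f}_2$, and the exhaustion must be set up with uniformly bounded overlap so that the finite sums appearing in both the Sobolev estimate and the dissipation estimate are genuinely controlled. With these choices the total Hamiltonian expression exceeds $-\eps$, establishing Eqn.~\eqref{global diss ineq}, while Eqn.~\eqref{global smooth approx value function} holds as above, completing the proof.
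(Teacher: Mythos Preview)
Your proposal is correct and follows essentially the same approach as the paper: both globalize Lemma~\ref{lem: sontag smooth approx} via a partition of unity $\{\zeta_i\}$ subordinate to a relatively-compact open cover of $E$, define $J=\sum_i\zeta_iJ_i$ with the local tolerances $\eps_i$ chosen \emph{after} the partition and scaled against $\norm{\nabla\zeta_i}_{L^\infty}$ and $\sup|f|$, and handle the cross terms in the dissipation inequality by the identity $\sum_i(\nabla_t\zeta_i+\nabla_x\zeta_i^{T}f)=0$ to replace $J_i$ by $J_i-V$. The only cosmetic differences are your explicit exhaustion construction for the cover and your emphasis on bounded overlap, whereas the paper absorbs everything into summable weights $\eps/2^{i+1}(1+\tau_i+\theta_i)$; the underlying mechanism is identical.
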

\begin{proof}
	Let us consider the family of sets $E_i=\{x \in E: \sup_{y \in {\partial E }}||x - y||_2<\frac{1}{i} \}$ for $i \in \N$. It follows $\{E_i\}_{i=1}^\infty$ is an open cover (Defn.~\ref{defn: open cover}) for $E$ and thus by Theorem~\ref{thm: partition of unity} there exists a smooth partition of unity, $\{\psi_i \}_{i=1}^\infty \subset C^\infty(E,\R)$, that satisfies Statements~1 to 4 of Theorem~\ref{thm: partition of unity}.
	
	For $\eps>0$ Lemma \ref{lem: sontag smooth approx} shows that for each $i \in \N$ there exists a function $J_i \in C^\infty( {\overline{ E_i } }, \R)$ such that
	\begin{align} \label{global approx}
	& \sup_{(x,t) \in E_i} |V(x,t) - J_i(x,t)|<\frac{\eps}{2^{i+1}(1+\tau_i+ \theta_i)},\\
	& \label{global approx W norm} ||V - J_i||_{W^{1,p}(\overline{ E_i },\R)}<\frac{\eps}{2^{i+1}(1+\tau_i + \theta_i)}, \\ \nonumber
	\nabla_t & J_i(x,t) + \nabla_x J_i(x,t)^Tf(x,u) + c(x,u,t) \ge -\frac{\eps}{2^{i+1}(1+\tau_i+ \theta_i)}\\
	& \qquad \text{for all } (x,t) \in \overline{ E_i} \cap (\Omega \times (0,T)), u \in U, \label{global diss approx}
	\end{align}
	where we denote $\tau_i := \sup_{(x,u,t) \in \Omega \times U \times (0,T)}\{ |\nabla_t \psi_i(x,t) + \nabla_x \psi_i(x,t)^Tf(x,u)| \} \ge 0$ and $\theta_i:=\left( \max_{|\alpha| \le 1} \sup_{(x,t) \in E} {|D^\alpha \psi_i(x,t)|^p} \right)^p \ge 0$; which is well defined and finite as $\Omega \times U \times (0,T)$ is bounded and $\psi_i$ is smooth.
	
	Now, let us define $J(x,t):= \sum_{i=1}^\infty \psi_i(x,t) J_i(x,t)$, we will show $J \in C^\infty(E, \R)$ and that $J$ satisfies Eqs.~\eqref{global smooth approx value function} and \eqref{global diss ineq}.
	
	It follows $J \in C^\infty(E, \R)$ by Theorem~\ref{thm: partition of unity}. To see this we note for each $i \in \N$ we have $ \psi_i \in C^\infty(E, \R)$ and $\psi_i(x,t)=0$ outside $E_i$ implying $\psi_i J_i \in C^\infty(E, \R)$. Moreover, for each $(x,t) \in E$ there exists an open set, $S \subseteq E$, where only a finite number of $\psi_i$ are nonzero. Therefore it follows that the function $J$ is a finite sum of infinitely differentiable functions and thus $J$ is also infinitely differentiable.
	
	We now show $J$ satisfies Eq.~\eqref{global smooth approx value function}. We first show $\norm{V - J}_{W^{1,p}(E,\R)}<\eps$:
		\begin{align} \label{similar argument}
&	\norm{V - J}_{W^{1,p}(E,\R)} = \norm{V - \sum_{i=1}^\infty \psi_i J_i}_{W^{1,p}(E,\R)}\\ \nonumber
& = \norm{\sum_{i=1}^\infty \psi_i(V - J_i)}_{W^{1,p}(E,\R)} \le \sum_{i=1}^\infty \norm{\psi_i(V -  J_i)}_{W^{1,p}(E,\R)}\\ \nonumber
		&= \sum_{i=1}^\infty \norm{\psi_i(V -  J_i)}_{W^{1,p}(\bar{E}_i,\R)} \le  \sum_{i=1}^\infty \theta_i \norm{V -  J_i}_{W^{1,p}(\bar{E}_i,\R)}\\ \nonumber
	& < \sum_{i=1}^\infty \bigg(\frac{\eps + \theta_i}{2^{i+1}(1+\tau_i + \theta_i)} \bigg) < \eps.
	\end{align}
	The second equality of Eq.~\eqref{similar argument} follows since partitions of unity satisfy $\sum_{i=1}^\infty \psi_i(x,t) \equiv 1$ by Theorem~\ref{thm: partition of unity}. The first inequality follows by the triangle inequality. The third equality follows since partitions of unity satisfy $\psi_i(x,t) =0$ outside of $E_i$ for all $i \in \N$ by Theorem~\ref{thm: partition of unity}. The third inequality follows by Eq.~\eqref{global approx W norm}. The fourth inequality follows as $\sum_{i=1}^\infty \frac{1}{2^i}=1$. Now, by a similar augment to Eq.~\eqref{similar argument}, using Eq.~\eqref{global approx} rather than Eq.~\eqref{global approx W norm}, it also follows $\sup_{(x,t) \in E}|V(x,t) - J(x,t)|<\eps$ and thus $J$ satisfies Eq.~\eqref{global smooth approx value function}.
	
	
	Next we will show $J$ satisfies Eq.~\eqref{global diss ineq}. Before doing this we first prove a preliminary identity. Specifically,
	\begin{align} \label{identity psi}
	\sum_{i=1}^\infty \bigg(\nabla_t \psi_i(x,t) + \nabla_x \psi_i(x,t)^Tf(x,u) \bigg)=0,
	\end{align}
	for all $(x,t)\in \Omega \times (0,T) \subseteq E$ and $u \in U$. This follows {because only finitely many $\psi_i$'s are non-zero for each $(x,t) \in E$ and thus it follows $\sum_{i=1}^\infty \psi_i(x,t)$ is a finite sum of infitely differentiable functions. Therefore, we can interchange derivatives and summations, thus since $\sum_{i=1}^\infty \psi_i(x,t) \equiv 1$ it follows that $\nabla_t \bigg(\sum_{i=1}^\infty \psi_i(x,t) \bigg) =\sum_{i=1}^\infty \nabla_t \psi_i(x,t) = 0 $. Similarly for each $j \in \{1,...,n\}$ we have $\sum_{i=1}^\infty \frac{ \partial \psi_i(x,t)}{\partial x_j} = 0$ which implies $\sum_{i=1}^\infty \nabla_x \psi_i(x,t) = 0 \in \R^n $.}
	
	Now, it follows $J$ satisfies Eq.~\eqref{global diss ineq} since
	\begin{align} \label{52}
	& \nabla_t  J(x,t) + \nabla_x J(x,t)^Tf(x,u) + c(x,u,t)\\ \nonumber
	& = \sum_{i=1}^\infty \bigg( \psi_i(x,t)  ( \nabla_t J_i(x,t) + \nabla_x J_i(x,t)^T f(x,u) + c(x,u,t) ) \bigg) \\ \nonumber
	& \quad + \sum_{i=1}^\infty \bigg( J_i(x,t)(\nabla_t \psi_i(x,t) + \nabla_x \psi_i(x,t)^Tf(x,u) ) \bigg) \\ \nonumber
	& \ge \frac{-\eps}{2} + \sum_{i=1}^\infty  (J_i(x,t)- V(x,t))(\nabla_t \psi_i(x,t) + \nabla_x \psi_i(x,t)^Tf(x,u) ) \\ \nonumber
	& \ge - \eps,
	\end{align}
	for all $(x,t)\in \Omega \times (0,T) \subseteq E$ and $u \in U$. The first equality of Eq.~\eqref{52} follows by the chain rule and the fact $\sum_{i=1}^\infty \psi_i(x,t) \equiv 1$. The first inequality follows by Eqs.~\eqref{global diss approx} and \eqref{identity psi}. The second inequality follows by Eq.~\eqref{global approx} and $\sum_{i=1}^\infty \frac{1}{2^i}=1$.
\end{proof}
\subsection{Existence Of Dissipative Polynomials That Can Approximate The VF Arbitrarily well Under The $W^{1,p}$ Norm} \label{subsec: polynomials can approx value functions}
%
Previously, in Prop.~\ref{prop:sontag smooth approx}, we showed for any $V \in Lip(\Omega \times [0,T],\R)$ satisfying Eq.~\eqref{diss ineq ae} there exists a smooth function $J$ that also satisfies Eq.~\eqref{diss ineq ae} and approximates $V$ with arbitrary accuracy under the Sobolev norm. We now use this result to show for any VF, associated with some family OCPs $\{c,g,f,\Omega,U,T\} \in \mcl M_{Lip}$, there exists a dissipative polynomial, $V_l$, that approximates the VF arbitrarily well with respect to the Sobolev norm. Our proof uses Theorem~\ref{thm:Nachbin}, found in Appendix~\ref{sec: appendix 3}, that shows differentiable functions, such as $J$, can be approximated up to their first order derivatives over compact sets arbitrarily well by polynomials. Prop.~\ref{prop:sontag smooth approx} only gives the existence of a smooth approximation, $J$, when the VF is Lipschitz continuous. Lemma~\ref{lem: value function is lip} shows the VF, associated with a family of OCPs, is locally Lipschitz when $\Omega= \R^n$ (which is not a compact set). Unfortunately, Theorem~\ref{thm:Nachbin} can only be used for polynomial approximation over compact sets. Thus, before proceeding we first give a sufficient condition for a VF, associated with a family of OCPs with compact state constraints, to be Lipschitz continuous over some set $\Lambda \subset \Omega$.




\paragraph{Lipschitz continuity of VFs associated with a family of state constrained OCPs} Consider the family of OCPs $\{c,g,f,\Omega,U,T\} \in \mcl M_{Lip}$. If the state is constrained ($\Omega \ne \R^n$), the associated VF can be discontinuous and is no longer uniquely defined as the viscosity solution of the HJB PDE. Next, in Lemma \ref{lem: value function of unconstrained and constrained problem are equal}, we give a sufficient condition that when satisfied implies VFs, associated with a family of state constrained OCPs, are equal to the unique locally Lipschitz continuous VF of the state unconstrained OCP over some subset $\Lambda \subseteq \Omega$,  {and hence are Lipschitz continuous over $\Lambda$}. To state Lemma~\ref{lem: value function of unconstrained and constrained problem are equal} we first define the forward reachable set.

\begin{defn} \label{defn: reachbale set}
	For $X_0 \subset \R^n$, $\Omega \subseteq \R^n$, $U \subset \R^m$, $f: \R^n \times \R^m \to \R^n$ and $S \subset \R^+$, define
	{ \begin{align*}
		FR_f  (X_0,\Omega,U, & S)  := \bigg\{  y \in \R^n \;: \; \hspace{-0.075cm} \text{there exists } \hspace{-0.075cm} x \in X_0,  T \in S, \text{and} \\ & \mbf u \in \mcl U_{\Omega,U,f,T}(x,0)
		\text{ such that } \phi_f(x,T,\mbf u)=y  \bigg\}.
		\end{align*} } \normalsize
\end{defn}

\begin{lem} \label{lem: value function of unconstrained and constrained problem are equal}
	Consider $\{c,g,f,\Omega,U,T\} \in \mcl M_{Lip}$ and any function $V_1: \Omega \times [0,T] \to \R$ that satisfies Eq.~\eqref{opt: optimal control general}. Let $V_2: \R^n \times [0,T] \to \R$ be the VF for the unconstrained problem  $\{c,g,f,\R^n,U,T\}$. If $\Lambda \subseteq \Omega$ is such that
	\begin{align} \label{reachable set condition}
	FR_f  (\Lambda,\R^n,U, [0,T]) \subseteq \Omega,
	\end{align}
	then $V_1(x,t)= V_2(x,t)$ for all $(x,t) \in \Lambda \times [0,T]$.
\end{lem}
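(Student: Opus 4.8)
The plan is to reduce the claimed pointwise equality of value functions to an equality of feasible sets. Both $V_1$ and $V_2$ are given by the infimum in Eqn.~\eqref{opt: optimal control general} of \emph{the same} objective functional, since the integrand and terminal term depend only on the shared data $c,g,f$; the sole difference between the constrained problem $\{c,g,f,\Omega,U,T\}$ and the unconstrained problem $\{c,g,f,\R^n,U,T\}$ is the constraint set appearing in the definition of admissibility. Hence it suffices to prove
\[
\mcl U_{\Omega,U,f,T}(x,t) = \mcl U_{\R^n,U,f,T}(x,t) \quad \text{for all } (x,t) \in \Lambda \times [0,T].
\]
Once this is in hand, the two infima (and the convention assigning the value $\infty$ to an empty feasible set) agree, yielding $V_1(x,t)=V_2(x,t)$ on $\Lambda \times [0,T]$.

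One inclusion is immediate and holds at every point: because $\Omega \subseteq \R^n$, the admissibility requirement $\phi_f(\cdot)\in\Omega$ is strictly stronger than $\phi_f(\cdot)\in\R^n$, so $\mcl U_{\Omega,U,f,T}(x,t) \subseteq \mcl U_{\R^n,U,f,T}(x,t)$. The content of the lemma lies in the reverse inclusion for $x \in \Lambda$, which I would establish as follows. Given $x \in \Lambda$, $t \in [0,T]$ and $\mbf u \in \mcl U_{\R^n,U,f,T}(x,t)$, Defn.~\ref{defn: soln map} already supplies a unique trajectory $\phi_f(x,r,\tau_t\mbf u)$ for $r \in [0,T-t]$; the only additional property needed to place $\mbf u$ in $\mcl U_{\Omega,U,f,T}(x,t)$ is the state constraint $\phi_f(x,r,\tau_t\mbf u)\in\Omega$ for every such $r$.

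The key step is to recognize each intermediate state of this trajectory as a forward-reachable endpoint. I would fix $r \in [0,T-t]$ and restrict the shifted control, setting $\mbf v := (\tau_t\mbf u)\big|_{[0,r]}$. Since $f$ carries no explicit time dependence, the solution on $[0,r]$ depends only on the control on $[0,r]$, so by uniqueness of ODE solutions $\mbf v \in \mcl U_{\R^n,U,f,r}(x,0)$ and $\phi_f(x,r,\mbf v)=\phi_f(x,r,\tau_t\mbf u)$. As $x \in \Lambda$ and $r \in [0,T]$, Defn.~\ref{defn: reachbale set} then gives $\phi_f(x,r,\tau_t\mbf u) \in FR_f(\Lambda,\R^n,U,[0,T])$, which by hypothesis \eqref{reachable set condition} is contained in $\Omega$. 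Because $r$ was arbitrary, the entire trajectory stays in $\Omega$, so $\mbf u \in \mcl U_{\Omega,U,f,T}(x,t)$ and the two admissible sets coincide.

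I expect the principal obstacle to be bookkeeping rather than conceptual. The reachable set $FR_f$ is defined purely through trajectory endpoints, whereas admissibility demands that the \emph{whole} trajectory remain in $\Omega$; the bridge is to sweep the time horizon $r$ over $[0,T]$ and restrict the control accordingly, turning each intermediate state into the endpoint of a shorter reachable trajectory, all while keeping the shift operator $\tau_t$ and the initial-time-zero convention of the solution map consistent. The absence of explicit time dependence in $f$ is exactly what legitimizes identifying $\phi_f(x,r,\mbf v)$ (started at wall-clock time $0$) with $\phi_f(x,r,\tau_t\mbf u)$ (embedded in a trajectory started at time $t$); were $f$ to depend on $t$, the reachable-set definition would have to be enlarged to record the starting time.
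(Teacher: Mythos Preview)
Your proposal is correct and follows essentially the same approach as the paper: reduce to equality of admissible input sets, note the trivial inclusion $\mcl U_{\Omega,U,f,T}(x,t)\subseteq\mcl U_{\R^n,U,f,T}(x,t)$, and for the reverse inclusion use the reachable-set hypothesis to show the shifted trajectory $\phi_f(x,r,\tau_t\mbf u)$ stays in $\Omega$ for every $r\in[0,T-t]$. Your write-up is in fact more careful than the paper's (which dispatches the last step as ``follows trivially''), particularly in spelling out the control restriction $\mbf v$ and the role of time-invariance of $f$.
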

\begin{proof}
	To show $V_1(x,t)= V_2(x,t)$ for all $(x,t) \in \Lambda \times [0,T]$ we must prove $\mcl U_{\Omega,U,f,T}(x,t)= \mcl U_{\R^n,U,f,T}(x,t)$ for all $(x,t) \in \Lambda \times [0,T]$.
	
	For any $(x,t) \in \Lambda \times [0,T]$ if $\mbf u \in \mcl U_{\Omega,U,f,T}(x,t)$ then clearly $\mbf u \in \mcl U_{\R^n,U,f,T}(x,t)$, thus $\mcl U_{\Omega,U,f,T}(x,t) \subseteq \mcl U_{\R^n,U,f,T}(x,t)$. On the other hand if $\mbf u \in \mcl U_{\R^n,U,f,T}(x,t)$ then by Defn.~\ref{defn: soln map} it follows $\mbf u (s) \in U$ for all $s \in [t, T]$ and that there exists a unique map, denoted by $\phi_f(x,s,\mbf u)$, that satisfies the following for all $s \in [t,T]$
	\begin{align*}
	& \frac{\partial \phi_f(x, s-t,\tau_t \mbf u)}{\partial s}= f(\phi_f(x,s-t, \tau_t \mbf u), \mbf u(s)), \text{ } \phi_f(x,0,\tau_t \mbf u)=x.
	\end{align*}
	To show $\mbf u \in \mcl U_{\Omega,U,f,T}(x,t)$ we need $\phi_f(x,s-t,\tau_t \mbf u) \in \Omega \text{ for all } s \in [t,T],$ which is equivalent to
	\begin{align}\label{99} \phi_f(x,s, \tilde{\mbf u}) \in \Omega \text{ for all } s \in [0,T-t],\end{align} where $\tilde{\mbf u}= \tau_t \mbf u \in \mcl U_{\Omega,U,f,T-t}(x,0)$. Eq.~\eqref{99} then follows trivially by Eq.~\eqref{reachable set condition}.
\end{proof}
Alternative sufficient conditions that imply a VF, associated with some family of state constrained OCPs, is Lipschitz continuous and the unique viscosity solution of the HJB PDE include: the Inward Pointing Constraint Qualification (IPCQ) \cite{soner1986optimal} \cite{frankowska2013discontinuous}, the Outward Pointing Constraint Qualification (OPCQ) \cite{frankowska2000existence}, and epigraph characterization of VF's \cite{altarovici2013general}.

\paragraph{Approximation of VFs by dissipative polynomials}
Considering a family of OCPs $\{c,g,f,\Omega,U,T\} \in \mcl M_{Lip}$, and assuming there exists a set $\Lambda \subseteq \Omega$ that satisfies Eq.~\eqref{reachable set condition}, we now prove the existence of dissipative polynomial functions that can approximate the any VF of $\{c,g,f,\Omega,U,T\} \in \mcl M_{Lip}$ arbitrarily well under the Sobolev norm.
\begin{thm} \label{thm: approx true value functions}
	For given $\{c,g,f,\Omega,U,T\} \in \mcl M_{Lip}$ suppose $\Lambda \subseteq \Omega$ is a bounded set that satisfies \eqref{reachable set condition}, then for any function $V$ satisfying Eq.~\eqref{opt: optimal control general}, $1 \le p < \infty$, and $\eps>0$ there exists $V_l \in \mcl P( \R^n \times \R, \R)$ such that
	\begin{align}
	\label{V_l close to V}
	&  \norm{V - V_l}_{W^{1,p}(\Lambda \times [0,T],\R)} < \eps,\\ \label{V_l close to V infinty norm}
	& \sup_{(x,t) \in \Lambda \times [0,T]}|V(x,t) - V_l(x,t) |<\eps,\\
	\label{V_l is a subvalue function}
	& V_l(x,t) \le V(x,t)    \text{ for all } t \in [0,T] \text{ and } x \in \Omega, \\ \label{V_l diss ineq}
	& \nabla_t V_l(x,t) + c(x,u,t) + \nabla_x V_l(x,t)^T f(x,u) > 0 \\ \nonumber
	& \hspace{3.25cm}  \text{ for all } x \in \Omega, t \in (0,T), u \in U, \\ \label{V_l boundary ineq}
	& V_l (x,T) < g(x) \text{ for all } x \in \Omega.
	\end{align}
\end{thm}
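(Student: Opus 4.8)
The plan is to build $V_l$ by a three-stage smoothing/polynomialization of the true VF and then to ``strictify'' the resulting inequalities with an affine-in-time correction. First I would reduce the problem to approximating a Lipschitz function satisfying the a.e.\ dissipation inequality \eqref{diss ineq ae}. Since $\Lambda$ satisfies \eqref{reachable set condition}, Lemma~\ref{lem: value function of unconstrained and constrained problem are equal} gives $V(x,t)=V_2(x,t)$ on $\Lambda \times [0,T]$, where $V_2$ is the unconstrained VF of $\{c,g,f,\R^n,U,T\}$. By Lemma~\ref{lem: value function is lip}, $V_2 \in LocLip$, hence is uniformly Lipschitz on compact sets, and by Theorem~\ref{thm: HJB value function} it is the viscosity solution of \eqref{eqn: general HJB PDE}. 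At every point of differentiability (a.e., by Rademacher) the classical equation holds, so $\nabla_t V_2 + \inf_{u}\{c + \nabla_x V_2^T f\}=0$, whence $\nabla_t V_2 + c(x,u,t) + \nabla_x V_2^T f(x,u) \ge 0$ for each fixed $u \in U$; taking the essential infimum over $(x,t)$ yields exactly \eqref{diss ineq ae} for $V_2$. Thus on a bounded open $E$ with $\overline{\Omega}\times[0,T]$ compactly contained (extending $V_2$ Lipschitz-ly past $t=T$ only so that mollification is defined near the terminal slice), $V_2$ meets the hypotheses of Prop.~\ref{prop:sontag smooth approx}.

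Next I would run the two approximation engines at a small tolerance $\eps'>0$. Prop.~\ref{prop:sontag smooth approx} produces a smooth $J$ with $\norm{V_2-J}_{W^{1,p}(E,\R)}<\eps'$, $\sup_E|V_2-J|<\eps'$, and $\nabla_t J + \nabla_x J^T f + c \ge -\eps'$ on $\Omega\times(0,T)$. Theorem~\ref{thm:Nachbin} then supplies a polynomial $V_p$ approximating $J$ together with its first derivatives uniformly on the compact set $\overline E$; since uniform $C^1$-closeness on a bounded set controls the $W^{1,p}$ and sup norms, $V_p$ inherits $\norm{V_2-V_p}_{W^{1,p}}<2\eps'$ and $\sup|V_2-V_p|<2\eps'$, while the near-dissipation degrades only to $\nabla_t V_p + \nabla_x V_p^T f + c \ge -2\eps'$ on $\Omega\times(0,T)$. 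Because $V=V_2$ on $\Lambda$, the closeness statements \eqref{V_l close to V} and \eqref{V_l close to V infinty norm} are essentially secured on $\Lambda\times[0,T]$, up to the forthcoming correction.

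Finally I would set $V_l(x,t):=V_p(x,t)-\beta(T-t)-\alpha$ with $\alpha,\beta>0$. The time shift contributes $+\beta$ to the dissipation expression, so $\nabla_t V_l + \nabla_x V_l^T f + c \ge -2\eps'+\beta$, which is strictly positive once $\beta>2\eps'$, giving \eqref{V_l diss ineq}; and $V_l(x,T)=V_p(x,T)-\alpha \le g(x)+2\eps'-\alpha < g(x)$ once $\alpha>2\eps'$ (using $V_2(\cdot,T)=g$), giving \eqref{V_l boundary ineq}. Crucially, \eqref{V_l is a subvalue function} then comes for free: $V_l\in C^1$ satisfies the non-strict versions of \eqref{ineq: diss ineq for sub sol of HJB}--\eqref{ineq: BC} on $\Omega$, so Prop.~\ref{prop: diss ineq implies lower soln} yields $V_l\le V$ on $\Omega\times[0,T]$ for the given VF $V$. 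To close the bookkeeping I would fix $\eps'=\eps/C_0$ and $\alpha=\beta=3\eps'$ for a suitable constant $C_0$, so that the extra error $\beta(T-t)+\alpha$ added on $\Lambda$ keeps the total below $\eps$ in both \eqref{V_l close to V} and \eqref{V_l close to V infinty norm}.

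The main obstacle is that \eqref{V_l diss ineq} and \eqref{V_l boundary ineq} are required on all of $\Omega$, whereas closeness is asked only on $\Lambda$; this is precisely why the route runs through Prop.~\ref{prop:sontag smooth approx}, whose conclusion \eqref{global diss ineq} is already global over $\Omega\times(0,T)$ (and which forces the working domain $E$, hence $\Omega$, to be bounded). The remaining care-points are technical: verifying that $V_2$ inherits \eqref{diss ineq ae} from a.e.\ differentiability of the viscosity solution, handling the terminal slice $t=T$ (lying on $\partial(\Omega\times(0,T))$) via the Lipschitz extension so that the sup-estimate and the terminal inequality remain valid there, and checking that the single affine correction simultaneously strictifies the dissipation and terminal inequalities without spoiling the $W^{1,p}$ bound.
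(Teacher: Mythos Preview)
Your proposal is correct and follows essentially the same route as the paper: reduce to the unconstrained VF via Lemma~\ref{lem: value function of unconstrained and constrained problem are equal}, verify \eqref{diss ineq ae} from Rademacher and the viscosity property, apply Prop.~\ref{prop:sontag smooth approx} then Theorem~\ref{thm:Nachbin}, and subtract an affine-in-$t$ correction (the paper's explicit choice is $\rho(t)=(2+M)(T-t)\delta+2\delta$ with $M=\sup_{\Omega\times U}\|f\|_2$). One bookkeeping slip: after the polynomial step the dissipation lower bound degrades to $-(2+M)\eps'$, not $-2\eps'$, because the $\nabla_x$-error is multiplied by $f$; your $\beta$ (and hence $C_0$) must absorb this $M$-dependent constant, so $\beta=3\eps'$ will not suffice when $M>1$.
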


\begin{proof}
	Let $\eps>0$. Suppose $V$ satisfies Eq.~\eqref{opt: optimal control general}. Rather than approximating $V$, defined for a family of OCPs on the compact set $\Omega$, we instead approximate the unique VF, denoted by $V^*$, associated with the family of OCPs where $\Omega=\R^n$. It is easier to approximate $V^*$ compared to $V$ as $V^*$ has the following useful properties: By Lemma~\ref{lem: value function is lip}, $V^*$ is locally Lipschitz continuous; and by Theorem~\ref{thm: HJB value function}, $V^*$ is the unique viscosity solution of the HJB PDE~\eqref{eqn: general HJB PDE}. Furthermore, as $\Lambda$ satisfies Eq.~\eqref{reachable set condition}, Lemma~\ref{lem: value function of unconstrained and constrained problem are equal} implies
	\begin{align} \label{V star = V}
V^*(x,t)= V(x,t) \text{ for all } (x,t) \in \Lambda \times [0,T].
	\end{align}
	
This proof is structured as follows. We first use Prop.~\ref{prop:sontag smooth approx} to approximate $V^*$ by an infinitely differentiable function denoted as $J_\delta$. Then using Theorem~\ref{thm:Nachbin}, found in Appendix~\ref{sec: appendix 3}, we approximate $J_\delta$ by a polynomial $P_\delta$. Finally, to ensure Inequalities~\eqref{V_l diss ineq} and \eqref{V_l boundary ineq} are satisfied, a correction term $\rho$ is subtracted from $P_\delta$, creating the function $V_l(x,t):=P_\delta(x,t)-\rho(t) $ that we show satisfies Eqs.~\eqref{V_l close to V} to~\eqref{V_l boundary ineq}.

	

%

Since $\Omega$ is compact, there exists some open bounded set $E \subset \R^{n+1}$ of finite measure which contains $\overline{\Omega \times (0,T) }$. Since ${V}^* \in LocLip(\R^n \times \R, \R)$ (by Lemma \ref{lem: value function is lip}) and $E \subset \R^n$ is bounded it follows $V^* \in Lip(E \times [0,T], \R)$. Then by Rademacher's theorem (See Theorem~\ref{thm: Rademacher theorem} in Section \ref{sec: appendix 3}), $V^*$ is differentiable almost everywhere in $E$. Moreover, as $V^*$ is the unique viscosity solution to the HJB PDE, the following holds for all $u \in U$ and almost everywhere in $(x,t) \in \Omega \times (0,T) \subset E$.
	\begin{align*}
	& \nabla_t {V}^*(x,t) + c(x,u,t) + \nabla_x {V}^* (x,t)^T f(x,u) \\
	& \ge \nabla_t {V}^*(x,t) + \inf_{u \in U}\{ c(x,u,t) + \nabla_x {V}^* (x,t)^T f(x,u) \} =0
	\end{align*}
	This implies that the following holds for all $u \in U$	
	\begin{align*}
	\essinf_{(x,t) \in \Omega \times (0,T)} \bigg\{\nabla_t {V}^* (x,t) + & \nabla_x {V}^* (x,t)^Tf(x,u) + c(x,u,t) \bigg\} \ge 0.
	\end{align*}
	
	Therefore, we conclude that $V^* $ satisfies Eq.~\eqref{diss ineq ae}. Thus, by Prop.~\ref{prop:sontag smooth approx}, for any $\delta>0$ there exists $J_\delta \in C^{\infty}(E, \R)$ such that
	{ {	\begin{align} \label{J close to V}
			& \norm{{V}^*  - J_\delta}_{W^{1,p}(E,\R)}<\delta , \\ \label{J dissipition ineq} 
			& \nabla_t J_\delta(x,t) + \nabla_x J_\delta(x,t)^Tf(x,u) + c(x,u,t) \ge -\delta\\ \nonumber
			& \hspace{4cm} \text{for all} (x,t) \in \Omega \times (0,T) .
			\end{align} }}
	In particular, let us choose $\delta>0$ such that
	\begin{align} \label{eps delta}
	\delta< \frac{\eps}{2 + (2 + 4T+2MT)(T\mu(\Lambda))^{\frac{1}{p}}},
	\end{align}	
where $M:=\sup_{(x,u) \in \Omega \times U} ||f(x,u)||_2<\infty$ and $\mu(\Lambda)< \infty$ is the Lebesgue measure of $\Lambda$.

	We now approximate $J_\delta \in C^\infty(E,\R)$ by a polynomial function. Theorem~\ref{thm:Nachbin}, found in Section~\ref{sec: appendix 3}, shows there exists $P_\delta \in \mcl P(\R^n \times \R, \R)$ such that {$\text{for all } (x,t) \in  E $}
	\begin{align} \label{J close to p}
	& |J_\delta(x,t)-P_\delta(x,t)| < \delta. \\ \label{J nabla t close to P}
	& |\nabla_t J_\delta(x,t)- \nabla_t P_\delta(x,t)|< \delta. \\ \label{J nabala x close to P}
	& ||\nabla_x J_\delta(x,t)- \nabla_x P_\delta(x,t)||_2< \delta.\\ \label{J close to P W norm}
	& \norm{ J_\delta - P_\delta}_{W^{1,p}(E,\R)}< \delta.
	\end{align}
	\begin{align} \nonumber
	&\text{Now, } \norm{V^*    - P_\delta}_{W^{1,p}(E,\R)} = \norm{V^* - J_\delta + J_\delta - P_\delta}_{W^{1,p}(E,\R)} \\
	& \le \norm{V^* - J_\delta}_{W^{1,p}(E,\R)} + \norm{ J_\delta - P_\delta}_{W^{1,p}(E,\R)} < 2 \delta   \label{P close to V},
	\end{align}
	where the first inequality follows by the triangle inequality, and the second inequality follows from Eq.~\eqref{J close to V} and Eq.~\eqref{J close to P W norm}.
	
	By a similar argument to Inequality~\eqref{P close to V} we deduce,
	 	\begin{align}
	&\sup_{(x,t) \in E}|V^*(x,t)    - P_\delta(x,t)| < 2 \delta   \label{P close to V uniformly}.
	\end{align}
	Furthermore,
	\begin{align} \nonumber
	&  \nabla_t P_\delta(x,t) + \nabla_x P_\delta(x,t)^Tf(x,u) + c(x,u,t)  \\ \nonumber
	& \ge \bigg(\nabla_t P_\delta(x,t) + \nabla_x P_\delta(x,t)^Tf(x,u) + c(x,u,t) \bigg)\\ \nonumber
	& \qquad -\delta - \bigg( \nabla_t J_\delta(x,t) + \nabla_x J_\delta(x,t)^Tf(x,u) + c(x,u,t) \bigg) \\ \nonumber
	& = -\delta + \bigg(\nabla_t P_\delta(x,t) - \nabla_t J_\delta(x,t) \bigg) \\ \nonumber
	& \qquad - \bigg( \nabla_x J_\delta(x,t) - \nabla_x P_\delta(x,t) \bigg)^T f(x,u)\\ \nonumber
	& > -\delta - \delta - ||\nabla_x J_\delta(x,t) - \nabla_x P_\delta(x,t)||_2||f(x,u)||_2\\ \label{P diss ineq}
	& > -(2+M) \delta  \text{ for all } (x,t) \in \Omega \times (0,T),
	\end{align}
	The first inequality of Eq.~\eqref{P diss ineq} follows by Eq.~\eqref{J dissipition ineq}. The second inequality follows by Eq.~\eqref{J nabla t close to P} and the Cauchy Schwarz inequality. The third inequality follows by Eq.~\eqref{J nabala x close to P}. 
	
	%

	Moreover, we have that
	\begin{align} \nonumber
	P_\delta(x,T) & = P_\delta(x,T) - V^*(x,T) + V^*(x,T)\\
	& < g(x)+  2 \delta \text{ for all } x \in \Omega. \label{P boundary condition}
	\end{align}
	This inequality follows from the fact that $V^*(x,T)=g(x)$ since $V^*$ satisfies the boundary condition in the HJB PDE~\eqref{eqn: general HJB PDE}, and Eq.~\eqref{P close to V uniformly}.
	
	We now construct $V_l$ from $P_\delta$. Let us denote the correction function $\rho(t):= (2+M)  (T-t)\delta + 2 \delta$, where $M=\sup_{(x,u) \in \Omega \times U} ||f(x,u)||_2$. We define $V_l$ as
	\begin{equation} \label{V_l}
	V_l(x,t):= P_\delta(x,t) - \rho(t).
	\end{equation}
	We now find that $V_l$ satisfies Inequality~\eqref{V_l diss ineq} since we have
	\begin{align*}
	& \nabla_t V_l(x,t) + c(x,u,t) + \nabla_x V_l(x,t)^T f(x,u)\\
	& =  \bigg(\nabla_t P_\delta(x,t) + \nabla_x P_\delta(x,t)^Tf(x,u) + c(x,u,t) \bigg) + (2+M) \delta\\
	& > 0, \qquad \text{for all }(x,t) \in \Omega \times (0,T) ,
	\end{align*}
where the above inequality follows from Eq.~\eqref{P diss ineq}.
	
	We next show $V_l$ satisfies Inequality~\eqref{V_l boundary ineq}:
	\begin{align*}
	V_l(x,T)= P_\delta(x,T) - 2\delta < g(x) \text{ for all } x \in \Omega,
	\end{align*}
	where the above inequality follows by Eq.~\eqref{P boundary condition}.
	
	Now, since $V_l$ satisfies Eqs.~\eqref{V_l diss ineq} and~\eqref{V_l boundary ineq} it follows $V_l$ satisfies Eq.~\eqref{V_l is a subvalue function} by Prop.~\ref{prop: diss ineq implies lower soln}.
	

	To show that $V_l$ satisfies Inequality~\eqref{V_l close to V}, we first we derive a bound on the norm of the correction function $\rho$.
	
	\vspace{-0.4cm}
{ \small	\begin{align*}
	& \norm{\rho }_{W^{1,p}(\Lambda \times [0,T],\R)}= \left( \int_{\Lambda \times [0,T]} |(2+M)  (T-t)\delta
	 + 2 \delta|^p dx dt \right)^{\frac{1}{p}} \\
	 & + \left( \int_{\Lambda \times [0,T]} |(2+M) \delta|^p dx dt \right)^{\frac{1}{p}} \le (2 + 4T+2MT)(T\mu(\Lambda))^{\frac{1}{p}}\delta.
	\end{align*} }

\vspace{-0.2cm}
\normalsize
Now, by~Eqs.~\eqref{V star = V}, \eqref{eps delta} and \eqref{P close to V},
	\begin{align} \label{V close to V_l sobolev}
	& \norm{V - V_l}_{W^{1,p}(\Lambda \times [0,T],\R)} =  \norm{V^* - V_l}_{W^{1,p}(\Lambda \times [0,T],\R)} \\ \nonumber
	&  = \norm{V^* - P_\delta - \eta}_{W^{1,p}(\Lambda \times [0,T],\R)}  \\ \nonumber
	& \le \norm{V^* - P_\delta }_{W^{1,p}(E,\R)} + \norm{\eta }_{W^{1,p}(\Lambda \times [0,T],\R)} \\ \nonumber
	& \le 2 \delta + (2 + 4T+2MT)(T\mu(\Lambda))^{\frac{1}{p}} \delta < \eps.
	\end{align}
	By a similar argument to Eq.~\eqref{V close to V_l sobolev} we deduce $V_l$ satisfies Eq.~\eqref{V_l close to V infinty norm}
	
We conclude that $V_l$, defined in Eq.~\eqref{V_l}, satisfies Eqs.~\eqref{V_l close to V infinty norm}, \eqref{V_l is a subvalue function}, \eqref{V_l diss ineq}, and \eqref{V_l boundary ineq} thus completing the proof.
\end{proof}
\vspace{-0.4cm}
\subsection{{Our Family Of Optimization Problems Yield A Sequence Of Polynomials That Converge To A VF Under The $L^1$ Norm}}\label{sec: opt for value function approx}
Consider some $\{c,g,f,\Omega,U,T\} \in \mcl M_{Lip}$ and suppose the sequence $\{J_d\}_{d \in \N}$ solves each instance of the optimization problem given in Eq.~\eqref{opt: convex L1 norm} for $d \in \N$. We next use Theorem~\ref{thm: approx true value functions} to show that the sequence, $\{J_d\}_{d \in \N}$ converges to any VF associated with the family of OCP's $\{c,g,f,\Omega,U,T\} \in \mcl M_{Lip}$ with respect to the weighted $L^1$ norm as $d \to \infty$.
\begin{prop} \label{prop: convergence of inf L1}
For given $\{c,g,f,\Omega,U,T\} \in \mcl M_{Lip}$ and positive integrable function $w\in L^1(\Omega \times [0,T],\R^+)$ suppose $\Lambda \subseteq \Omega$ satisfies Eq.~\eqref{reachable set condition} then
\begin{align} \label{convergence in L1}
\lim_{d \to \infty} \int_{\Lambda \times [0,T]} w(x,t) |V(x,t) -J_d(x,t)| dx dt  = 0,
\end{align}
where $V$ is any function satisfying Eq.~\eqref{opt: optimal control general}, and $J_d \in \mcl P_{d}(\R^n \times \R, \R)$ is any solution to Optimization Problem~\eqref{opt: convex L1 norm} for $d \in \N$.
\end{prop}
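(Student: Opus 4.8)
The plan is to combine the sub-value-function property enjoyed by every feasible point of \eqref{opt: convex L1 norm} with the existence of arbitrarily accurate feasible polynomial approximants guaranteed by Theorem~\ref{thm: approx true value functions}, and then close with a squeeze argument.

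First I would exploit feasibility. Any $J_d$ feasible for \eqref{opt: convex L1 norm} satisfies the strict dissipation and terminal inequalities, which in particular imply the non-strict inequalities \eqref{ineq: diss ineq for sub sol of HJB} and \eqref{ineq: BC}. Hence by Prop.~\ref{prop: diss ineq implies lower soln} every $J_d$ is a sub-VF, so $J_d(x,t) \le V(x,t)$ for all $(x,t) \in \Omega \times [0,T]$, and in particular on $\Lambda \times [0,T]$. Since $w \ge 0$ and $V - J_d \ge 0$ there, the absolute value disappears and the quantity of interest reduces to a signed integral,
\[
\int_{\Lambda \times [0,T]} w(x,t)\,|V(x,t)-J_d(x,t)|\,dx\,dt = \int_{\Lambda \times [0,T]} w\,(V - J_d)\,dx\,dt \ge 0.
\]
I would also record that this is finite: because $\Lambda$ satisfies \eqref{reachable set condition}, Lemma~\ref{lem: value function of unconstrained and constrained problem are equal} gives $V = V^*$ on $\Lambda \times [0,T]$, and by Lemma~\ref{lem: value function is lip} $V^*$ is (locally Lipschitz, hence) continuous and thus bounded on the bounded set $\Lambda \times [0,T]$, while $w \in L^1$.

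Next I would produce the matching upper bound. Fix $\eps>0$. Theorem~\ref{thm: approx true value functions} supplies a polynomial $V_l$ satisfying the strict inequalities \eqref{V_l diss ineq} and \eqref{V_l boundary ineq} — that is, $V_l$ is feasible for \eqref{opt: convex L1 norm} for every index $d \ge d_\eps := \deg V_l$ — together with the uniform bound $\sup_{(x,t)\in\Lambda\times[0,T]}|V(x,t)-V_l(x,t)| < \eps$ from \eqref{V_l close to V infinty norm}. Since $J_d$ maximizes $\int w J$ over the (larger) feasible set of the degree-$d$ problem, optimality gives $\int_{\Lambda\times[0,T]} w J_d \ge \int_{\Lambda\times[0,T]} w V_l$ for all $d \ge d_\eps$, whence
\[
\int_{\Lambda\times[0,T]} w\,(V - J_d)\,dx\,dt \le \int_{\Lambda\times[0,T]} w\,(V - V_l)\,dx\,dt \le \eps \int_{\Lambda\times[0,T]} w\,dx\,dt ,
\]
with $\int_{\Lambda\times[0,T]} w\,dx\,dt = \norm{w}_{L^1} < \infty$.

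Finally I would conclude by a squeeze: the first display gives $\int w (V-J_d) \ge 0$ for every $d$, while the second gives $\limsup_{d\to\infty} \int w(V - J_d) \le \eps\,\norm{w}_{L^1}$. As $\eps>0$ is arbitrary, $\limsup_{d\to\infty}\int w(V-J_d) = 0$, which with nonnegativity yields \eqref{convergence in L1}. The one step that must be handled correctly — the conceptual crux rather than a true obstacle — is the reduction of the weighted $L^1$ objective to a \emph{signed} integral via the sub-VF inequality, since it is this sign that lets the uniform ($L^\infty$) approximation bound of Theorem~\ref{thm: approx true value functions} control the weighted $L^1$ error; everything else is optimality over the nested feasible sets $\mcl P_d \subseteq \mcl P_{d+1}$ (which also shows the optimal values $\int w J_d$ are monotone in $d$, though this is not needed) plus the squeeze.
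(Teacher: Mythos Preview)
Your proposal is correct and follows essentially the same approach as the paper: use Prop.~\ref{prop: diss ineq implies lower soln} to drop the absolute value, invoke Theorem~\ref{thm: approx true value functions} to obtain a feasible polynomial $V_l$ with small uniform error, and compare objective values at $J_d$ and $V_l$ for $d\ge\deg V_l$ to bound $\int w(V-J_d)$. The only cosmetic difference is that the paper pre-scales its $\delta$ by $\|w\|_{L^1}$ (handling $\|w\|_{L^1}=0$ as a trivial aside), whereas you carry the factor $\|w\|_{L^1}$ through and send $\eps\to 0$ at the end.
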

\begin{proof} Suppose $V$ satisfies the theorem statement.
To show Eq.~\eqref{convergence in L1} we must show that for any $\eps>0$ there exists $N \in \N$ such that
	\begin{align*}
\int_{\Lambda \times [0,T]} w(x,t) |V(x,t) -J_d(x,t)| dx dt  < \eps \text{ for all } d \ge N.
	\end{align*}
	
	Since by assumption $\Lambda$ satisfies Eq.~\eqref{reachable set condition}, we can use Theorem~\ref{thm: approx true value functions} (from Section \ref{subsec: polynomials can approx value functions}) to show that for any $\delta>0$ there exists dissipative $V_l \in \mcl P(\R^n \times \R, \R)$ feasible to Optimization Problem~\eqref{opt: convex L1 norm} and is such that
\begin{align} \nonumber
\esssup_{(x,t) \in \Lambda \times [0,T]} |V(x,t) -V_l(x,t)|  < \delta.
\end{align}
For our given $\epsilon>0$, by selecting $\delta<{\eps} / { \int_{\Lambda \times [0,T]} w(x,t) dx dt}$ (Note if $\int_{\Lambda \times [0,T]} w(x,t) dx dt = 0$, Eq.~\eqref{convergence in L1} already holds and the proof is complete) we have a $V_l$ such that
	\begin{align} \label{eps}
& \int_{\Lambda \times [0,T]} w(x,t) |V(x,t) - V_l(x,t)| dx dt  \\ \nonumber
&\le \int_{\Lambda \times [0,T]} w(x,t) dx dt \esssup_{(x,t) \in \Lambda \times [0,T]} |V(x,t) -V_l(x,t)|\\ \nonumber
&<\delta \int_{\Lambda \times [0,T]} w(x,t) dx dt  <\eps  .
\end{align}

Now define $N:=\deg(V_l)$ and denote the solution to Problem~\eqref{opt: convex L1 norm} for $d \ge N$ as $J_d \in \mcl P_N(\R^n \times \R, \R)$. As $V_l$ is feasible to Problem~\eqref{opt: convex L1 norm} for all $d \ge N$, it follows the objective function evaluated at $J_d$ is greater than or equal to the objective function evaluated at $V_l$; that is
\begin{align}\label{j}
\int_{\Lambda \times [0,T]} \hspace{-0.75cm} w(x,t) J_d (x,t) dx dt \ge \int_{\Lambda \times [0,T]} \hspace{-0.75cm} w(x,t) V_l(x,t) dx dt \hspace{-0.05cm} \text{ for } d \ge N.
\end{align}
\begin{align} \label{80}
&\text{Now, } \int_{\Lambda \times [0,T]}  w(x,t) |V(x,t) -J_d(x,t)| dx dt \\ \nonumber
& \quad  = \int_{\Lambda \times [0,T]} w(x,t) V(x,t) - w(x,t) J_d(x,t) dx dt \\ \nonumber
& \quad \le \int_{\Lambda \times [0,T]}w(x,t)| V(x,t) -V_l(x,t)| dx dt < \eps \text{ for all } d \ge N.
\end{align}
The equality in Eq.~\eqref{80} follows since $J_d(x,t) \le V(x,t)$ for all $(x,t) \in \Omega \times[0,T]$ (Prop.~\ref{prop: diss ineq implies lower soln}). The first inequality follows by a combination of Eq.~\eqref{j} and the inequality $V_l(x,t) \le V(x,t)$ for all $(x,t) \in \Omega \times[0,T]$. Finally, the second inequality follows by Eq.~\eqref{eps}.
\end{proof}
\vspace{-0.6cm}
\section{{A Family Of SOS Problems That Yield Polynomials That Converge To The VF}} \label{sec: SOS implemtation}
\vspace{-0.1cm}
Consider some $\{c,g,f,\Omega,U,T\} \in \mcl M_{Lip}$ and denote $\{J_d\}_{d \in \N}$ as the sequence of solutions to the optimization problem found in Eq.~\eqref{opt: convex L1 norm}. We have shown in {Prop.~\ref{prop: convergence of inf L1}} that the sequence of functions, $\{J_d\}_{d \in \N}$, converge to any VF associated with the family of OCPs $\{c,g,f,\Omega,U,T\} \in \mcl M_{Lip}$ with respect to the $L^1$ norm. The indexed polynomial optimization problems in Eq.~\eqref{opt: convex L1 norm} may now be readily tightened to more tractable SOS optimization problems.


%
%
Specifically, for each $d \in \N$, we tighten the polynomial optimization problem in Eq.~\eqref{opt: convex L1 norm} to the SOS optimization problem given in Eq.~\eqref{opt: SOS for sub soln of finite time}. We {later} show in Prop.~\ref{prop: SOS converges} that the sequence of solutions to the SOS problem given in Eq.~\eqref{opt: SOS for sub soln of finite time} yield polynomials, $\{P_d\}_{d \in \N}$, indexed by degree $d \in \N$, that converge to the VF (with respect to the $L^1$ norm) as $d \to \infty$.

For our SOS implementation we consider a special class of OCPs, given next in Defn.~\ref{defn: polynomial optimal control}. 

\begin{defn} \label{defn: polynomial optimal control}
	We say the six tuple $\{c,g,f,\Omega,U,T\}$ is a \textit{polynomial optimal control problem} or $\{c,g,f,\Omega,U,T\} \in \mcl M_{Poly}$ if the following holds
	\begin{enumerate}
		\item $c \in \mcl P(\R^n\times \R^m \times \R, \R )$ and $g \in \mcl P(\R^n,\R)$.
		\item $f \in \mcl P(\R^n\times \R^m, \R^n )$.
		\item There exists $h_\Omega \in \mcl P(\R^n, \R)$ such that $\Omega=\{x \in \R^n: h_\Omega(x) \ge 0\}$.
		\item There exists $h_U \in \mcl P(\R^m, \R)$ such that $U=\{u \in \R^m: h_U(u) \ge 0\}$.
	\end{enumerate}
\end{defn}
\vspace{-0.1cm}
Note polynomials are locally Lipschitz continuous, that is $\mcl P(\R^n \times \R, \R) \subset LocLip(\R^n \times \R, \R)$. Therefore $\mcl M_{Poly} \subset \mcl M_{Lip}$.


For given $\{c,g,f,\Omega,U,T\} \in \mcl M_{Poly}$, $d \in \N$, $\Lambda \subset \R^n$ and  $w \in L^1(\Lambda \times [0,T],\R^+)$, we thus propose an SOS tightening of Optimization Problem~\eqref{opt: convex L1 norm} as follows:
\begin{align} \label{opt: SOS for sub soln of finite time}
& P_d \in \arg \max_{P \in \mcl P_d(\R^n \times \R, \R)} c^T \alpha\\ \nonumber
& \text{subject to: } k_{0},k_{1} \in \sum_{SOS}^d, \quad s_{i} \in \sum_{SOS}^d \text{ for } i=0,1,2,3, \\ \nonumber
& P (x,t) = c^T Z_d(x,t), \quad \\ \nonumber
&k_{0}(x) = g(x)-{P(x,T)} - s_{0}(x)h_\Omega (x),\\ \nonumber
& k_{1}(x,u,t) =  \nabla_t P(x,t) +c(x,u,t) + \nabla_x P(x,t)^T f(x,u)   \\ \nonumber
& \quad -s_{1}(x,u,t)h_\Omega(x) -s_{2}(x,u,t)h_U(u) - s_{3}(x,u,t) \cdot(Tt-t^2),
\end{align}
where $\alpha_i=\int_{\Lambda \times [0,T]} w(x,t) Z_{d,i}(x,t) dx dt$, and recalling $Z_d: \R^n \times \R \to \R^{ \mcl N_d}$ is the vector of monomials of degree $d \in \N$. {Note that solutions to Opt.~\eqref{opt: SOS for sub soln of finite time} may not be feasible to Opt.~\eqref{opt: convex L1 norm} due to the strict inequalities of the latter problem.} 


\vspace{-0.5cm}
\subsection{{We Can Numerically Construct A Sequence Of Polynomials That Converge The VF}}
{For a given family of OCPs, we now show that the sequence of solutions to the SOS Opts.~\eqref{opt: SOS for sub soln of finite time} converges locally to the VF of the associated OCPs with respect to the $L^1$ norm.}
\begin{prop} \label{prop: SOS converges}
For given $\{c,g,f,\Omega,U,T\} \in \mcl M_{Poly}$ and positive integrable function $w \in L^1(\Omega \times [0,T],\R^+)$ suppose $\Lambda \subseteq \Omega$ satisfies Eq.~\eqref{reachable set condition} then
\begin{align} \label{convergence in L1 for SOS}
\lim_{d \to \infty} \int_{\Lambda \times [0,T]} w(x,t) |V(x,t) -P_d(x,t)| dx dt  = 0,
\end{align}
where $V$ is any function satisfying Eq.~\eqref{opt: optimal control general} and $P_d \in \mcl P_d(\R^n \times \R, \R)$ is any solution to Problem~\eqref{opt: SOS for sub soln of finite time} for $d \in \N$.
\end{prop}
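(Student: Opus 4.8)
The plan is to follow the same sandwich argument used for Prop.~\ref{prop: convergence of inf L1}, the one genuinely new ingredient being a Positivstellensatz step that guarantees the SOS feasible set is rich enough to contain a polynomial arbitrarily close to $V$. Because the SOS program~\eqref{opt: SOS for sub soln of finite time} is a \emph{tightening} of~\eqref{opt: convex L1 norm}, its optimal value can only be smaller, so Prop.~\ref{prop: convergence of inf L1} cannot be invoked directly; the content of the proof is to show that the smaller SOS feasible set still approximates $V$. I would first observe that every solution $P_d$ of~\eqref{opt: SOS for sub soln of finite time} is a sub-VF. Indeed, given the SOS certificates $k_0,k_1$ and $s_0,s_1,s_2,s_3$, evaluating the two defining identities on $\Omega\times U\times[0,T]$, where $h_\Omega(x)\ge 0$, $h_U(u)\ge 0$ and $t(T-t)\ge 0$, forces
\begin{align*}
& \nabla_t P_d(x,t) + c(x,u,t) + \nabla_x P_d(x,t)^T f(x,u) \ge 0, \\
& P_d(x,T) \le g(x),
\end{align*}
so $P_d$ is dissipative (Defn.~\ref{defn: dissipative}) and by Prop.~\ref{prop: diss ineq implies lower soln} satisfies $P_d(x,t)\le V(x,t)$ on $\Omega\times[0,T]$.

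The crux of the argument, and the step I expect to be the main obstacle, is producing for each $\delta>0$ a polynomial that is feasible for~\eqref{opt: SOS for sub soln of finite time} at some finite degree $N$ and lies within $\delta$ of $V$ on $\Lambda\times[0,T]$. Theorem~\ref{thm: approx true value functions} supplies a polynomial $V_l$ satisfying the strict inequalities~\eqref{V_l diss ineq} and~\eqref{V_l boundary ineq} together with $\sup_{(x,t)\in\Lambda\times[0,T]}|V(x,t)-V_l(x,t)|<\delta$. To certify $V_l$ inside the SOS program I would invoke Putinar's Positivstellensatz: the dissipation polynomial is strictly positive on the compact semialgebraic set cut out by $h_\Omega$, $h_U$ and $t(T-t)$, and $g-V_l(\cdot,T)$ is strictly positive on the compact set cut out by $h_\Omega$, so --- provided the associated quadratic modules are Archimedean, which one may always enforce by adjoining a redundant ball constraint --- there exist SOS multipliers $s_0,\dots,s_3$ and SOS remainders $k_0,k_1$ realizing exactly the identities in~\eqref{opt: SOS for sub soln of finite time}. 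Taking $N$ to be the largest degree occurring in this representation makes $V_l$ feasible for~\eqref{opt: SOS for sub soln of finite time} for every $d\ge N$. The one technical care point is that Putinar requires strict positivity on the \emph{closed} domain $\Omega\times U\times[0,T]$, whereas~\eqref{V_l diss ineq} is stated on the open slab $t\in(0,T)$; this is reconciled by noting that the correction term in the construction of $V_l$ leaves a uniform positive gap, so the dissipation polynomial is in fact bounded below by a positive constant on the closure.

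Finally I would close with the optimality estimate exactly as in Prop.~\ref{prop: convergence of inf L1}. For $d\ge N$, optimality of $P_d$ and feasibility of $V_l$ give $\int_{\Lambda\times[0,T]} w P_d\,dx\,dt \ge \int_{\Lambda\times[0,T]} w V_l\,dx\,dt$; since both $P_d$ and $V_l$ are sub-VFs the absolute values collapse and
\begin{align*}
\int_{\Lambda\times[0,T]} w\,|V-P_d|\,dx\,dt &= \int_{\Lambda\times[0,T]} w\,(V-P_d)\,dx\,dt \\
&\le \int_{\Lambda\times[0,T]} w\,(V-V_l)\,dx\,dt \\
&< \delta \int_{\Lambda\times[0,T]} w\,dx\,dt.
\end{align*}
Choosing $\delta < \eps/\int_{\Lambda\times[0,T]} w\,dx\,dt$ (the case $\int w=0$ making~\eqref{convergence in L1 for SOS} immediate) bounds the right-hand side by $\eps$ for all $d\ge N$, giving~\eqref{convergence in L1 for SOS}. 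Thus the only substantive addition beyond the polynomial case is the Putinar certificate that turns the strictly feasible polynomial of Theorem~\ref{thm: approx true value functions} into an SOS-feasible point of bounded degree.
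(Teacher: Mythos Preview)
Your proposal is correct and follows essentially the same approach as the paper: produce a polynomial satisfying the dissipation inequalities strictly, invoke Putinar's Positivstellensatz to obtain SOS certificates of bounded degree, and close with the same optimality-plus-sub-VF sandwich. The only cosmetic difference is that the paper routes through Prop.~\ref{prop: convergence of inf L1} and applies Putinar to the optimizer $J_{d_1}$ of Problem~\eqref{opt: convex L1 norm}, whereas you apply Putinar directly to the polynomial $V_l$ of Theorem~\ref{thm: approx true value functions}---a slightly more direct path that also sidesteps the implicit assumption that Problem~\eqref{opt: convex L1 norm} attains its maximum.
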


\begin{proof}
	To show Eq.~\eqref{convergence in L1 for SOS} we show that for any $\eps>0$ there exists $N \in \N$ such that for all $d \ge N$
\begin{align*}
\int_{\Lambda \times [0,T]} w(x,t)|V(x,t) - P_d(x,t) | dx dt  < \eps.
\end{align*}
As it is assumed $\Lambda$ satisfies Eq.~\eqref{reachable set condition} we are able to use Prop.~\ref{prop: convergence of inf L1} that shows for any $\eps>0$ there exists $N_1 \in \N$ such that  for all $d \ge N_1$
\begin{align} \label{need 1}
\int_{\Lambda \times [0,T]} w(x,t)|V(x,t) -J_d(x,t)| dx dt  < \eps,
\end{align}
where $J_d$ is a solution to Optimization Problem \eqref{opt: convex L1 norm} for $d \in \N$.

In particular let us fix some $d_1 \ge N_1$. Since $J_{d_1}$ solves Problem~\eqref{opt: convex L1 norm} it must satisfy the constraints of Problem~\eqref{opt: convex L1 norm}. Thus we have

\vspace{-0.5cm}
\begin{align*}
	&k_0(x):=g(x) - J_{d_1} (x,T)>0 \text{ for all } x \in \Omega,  \\ \nonumber
& k_1(x,u,t):=\nabla_t J_{d_1}(x,t) + c(x,u,t) + \nabla_x J_{d_1}(x,t)^T f(x,u)>0 \\
& \hspace{3cm} \text{ for all } (x,u,t)\in \Omega \times U \times [0,T].
\end{align*}
Since $k_0$ and $k_1$ are strictly positive functions over the compact semialgebriac set  $\Omega \times U \times [0,T] = \{(x,u,t) \in \R^{n+m+1}: {h_\Omega(x) \ge 0}, h_U(u) \ge 0, t(T-t) \ge 0 \}$, Putinar's {Positivstellensatz} (stated in Theorem~\ref{thm: Psatz}, Appendix~\ref{sec: appendix 3}) shows that there exist $s_0,s_1,s_2,s_3,s_4,s_5 \in \sum_{SOS}$ such that

\vspace{-0.5cm}
\begin{align} \label{apply psatz}
k_0- h_\Omega s_0 =s_1,\\ \nonumber
k_1- h_\Omega s_2 - h_U s_3 - h_T s_4 =s_5,
\end{align}
where $h_T(t):=(t)(T-t)$. 


  {Let $N_2:= \max_{i\in\{0,1,2,3,4,5\}}{deg(s_i)}$}. By Eq.~\eqref{apply psatz} it follows that if $J_{d_1}$ is feasible to Problem~\eqref{opt: SOS for sub soln of finite time} for $d \ge \max\{d_1,N_2\}$. Therefore, for $d \ge \max\{d_1,N_2\}$, the objective function evaluated at the solution to Problem~\eqref{opt: SOS for sub soln of finite time} must be greater than or equal to objective function evaluated at $J_{d_1}$. That is by writing the solution to Problem~\eqref{opt: SOS for sub soln of finite time} as $P_d(x,t)= c_d^TZ_d(x,t)$ and writing $J_{d_1}$ as $J_{d_1}=b_{d_1}^TZ_{d_1}(x,t)$ we get that for $d \ge \max\{d_1,N_2\}$

  \vspace{-0.5cm}
\begin{align} \label{sum ineq}
 c_{d}^T \alpha \ge  b_{{d_1}}^T \alpha.
\end{align}
Now  using Eqs.~\eqref{sum ineq} and \eqref{need 1} it follows for all $d \ge \max\{d_1,N_2\}$

\vspace{-0.4cm}
\begin{align*}
&  \int_{\Lambda \times [0,T]}   w(x,t) |V(x,t) - P_d(x,t)| dx dt  \\
&  = \int_{\Lambda \times [0,T]} w(x,t)V(x,t) dx dt -  \int_{\Lambda \times [0,T]} w(x,t) P_d(x,t) dx dt\\
& =   \int_{\Lambda \times [0,T]} w(x,t) V(x,t) dx dt - c_{d}^T \alpha\\
& \le \int_{\Lambda \times [0,T]} w(x,t) V(x,t) dx dt - b_{d_1}^T \alpha\\
& =  \int_{\Lambda \times [0,T]} w(x,t) |V(x,t) -J_{d_1}(x,t)| dx dt  < \eps,
\end{align*}
where the above inequality follows using Prop.~\ref{prop: diss ineq implies lower soln}, which shows $P_d(x,t) \le V(x,t)$ and $J_d(x,t) \le V(x,t)$ for all $(x,t) \in \Omega \times [0,T]$, as $P_d$ and $J_d$ satisfy Inequalities~\eqref{ineq: diss ineq for sub sol of HJB} and \eqref{ineq: BC} (since they both satisfy the constraints of Optimization Problem~\eqref{opt: convex L1 norm}).
\end{proof}
Note, the proof of Prop.~\ref{prop: SOS converges}, that shows we can approximate VFs in the $L^1$ norm over compact sets, uses the fact that the VF is Lipschitz continuous when Eq.~\eqref{reachable set condition} is satisfied. For cases where Eq.~\eqref{reachable set condition} is not satisfied it may still be possible to show convergence if the VF of the state constrained OCP $\{c,g,f,\Omega,U,T\} \in \mcl M_{Poly}$ is Lipschitz continuos. 

\subsection{We Can Numerically Construct A Sequence Of Sublevel Sets That Converge To The VF's Sublevel Set}
For a given family of OCPs, Prop.~\ref{prop: SOS converges} shows the SOS optimization problem, given in Eq.~\eqref{opt: SOS for sub soln of finite time}, yields a sequence of polynomials, $\{P_d\}_{d \in \N}$, a sequence that converges to the VF (denoted by $V$), where convergence is with respect to the $L^1$ norm, and where the VF is  associated with the given family of OCPs. We next extend this convergence result by showing that, for any $\gamma \in \R$, the sequence $\{P_d\}_{d \in \N}$ yields a sequence of $\gamma$-sublevel sets, where the sequence of $\gamma$-sublevel sets converges to the $\gamma$-sublevel set of the value function, $V$, where convergence is with respect to the volume metric.

For sets $A,B \subset \R^n$, we denote the volume metric as $D_V(A,B)$, where
 \vspace{-0.65cm}
\begin{equation} \label{eqn: volume metric}
\qquad \qquad  D_V(A,B):=\mu( (A/B) \cup (B/A) ),
\end{equation}
\vspace{-0.6cm}

\noindent where we recall $\mu(A):= \int_{\R^n} \mathds{1}_{A}(x) dx$ is the Lebesgue measure. Note that Lemma \ref{lem: Dv is metric} (Appendix~\ref{sec: appendix 2}) shows that $D_V$ is a metric.
\vspace{-0.5cm}
\begin{prop} \label{prop: SOS converges Dv}
	Consider $\{c,g,f,\Omega,U,T\} \in \mcl M_{Poly}$ and $w(x,t)= \delta(t-s)$ where $s \in [0,T]$ and $\delta$ is the Dirac delta function. Suppose $\Lambda \subseteq \Omega$ satisfies Eq.~\eqref{reachable set condition}. Then we have the following for all $\gamma \in \R$:
	
	\vspace{-0.5cm}
	\begin{align} \label{convergence in DV for SOS}
	\lim_{d \to \infty} D_V\bigg(\{ x \in \Lambda : V(x,s) \le \gamma\}, \{x \in \Lambda : P_d(x,s) \le \gamma\} \bigg)  = 0,
	\end{align}
	
	\vspace{-0.3cm}
	\noindent where $V$ is any function satisfying Eq.~\eqref{opt: optimal control general}, and $P_d$ is any solution to Problem~\eqref{opt: SOS for sub soln of finite time} for $d \in \N$.
\end{prop}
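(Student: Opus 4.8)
The plan is to reduce the claim to a one-dimensional-slice $L^1$ statement and then convert that into convergence of sublevel sets using the sub-VF property of each $P_d$. Since the weight here is $w(x,t)=\delta(t-s)$, the weighted integral in Prop.~\ref{prop: SOS converges} collapses to the time slice $t=s$: for continuous test functions $\int_{\Lambda\times[0,T]}\delta(t-s)\phi(x,t)\,dx\,dt=\int_\Lambda \phi(x,s)\,dx$. The only place the proof of Prop.~\ref{prop: SOS converges} (via Prop.~\ref{prop: convergence of inf L1} and Thm.~\ref{thm: approx true value functions}) uses the weight is through the uniform bound $\sup_{(x,t)\in\Lambda\times[0,T]}|V(x,t)-V_l(x,t)|<\delta$, which controls $\int_\Lambda|V(x,s)-V_l(x,s)|\,dx\le\mu(\Lambda)\delta$, together with the monotonicity in $d$ of the objective $\int_\Lambda P_d(x,s)\,dx$. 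Re-running that argument with this weight therefore yields $\lim_{d\to\infty}\int_\Lambda|V(x,s)-P_d(x,s)|\,dx=0$, which I take as the starting point.

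Write $A:=\{x\in\Lambda:V(x,s)\le\gamma\}$ and $A_d:=\{x\in\Lambda:P_d(x,s)\le\gamma\}$; both are Borel since $V(\cdot,s)$ is continuous (Lem.~\ref{lem: value function is lip}) and $P_d(\cdot,s)$ is polynomial, and both have finite measure because $\Lambda$ is bounded. The crucial structural fact is that each $P_d$ is a sub-VF (Prop.~\ref{prop: diss ineq implies lower soln}), so $P_d(x,s)\le V(x,s)$ for all $x\in\Lambda$; hence $V(x,s)\le\gamma$ forces $P_d(x,s)\le\gamma$, i.e. $A\subseteq A_d$. Consequently $A\setminus A_d=\emptyset$ and $D_V(A,A_d)=\mu(A_d\setminus A)$, where $A_d\setminus A=\{x\in\Lambda:P_d(x,s)\le\gamma<V(x,s)\}$. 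Note the strict inequality $\gamma<V(x,s)$: the level set $\{x\in\Lambda:V(x,s)=\gamma\}$ lies inside both $A$ and $A_d$ and thus never enters the symmetric difference.

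It then remains to show $\mu(A_d\setminus A)\to0$, which I would do by splitting at height $\gamma+\eps$. For each $\eps>0$, any $x\in A_d\setminus A$ satisfies either $V(x,s)\le\gamma+\eps$ or $V(x,s)>\gamma+\eps$; in the latter case $V(x,s)-P_d(x,s)>\eps$. Writing $C_\eps:=\{x\in\Lambda:\gamma<V(x,s)\le\gamma+\eps\}$, this gives $\mu(A_d\setminus A)\le\mu(C_\eps)+\frac1\eps\int_\Lambda|V(x,s)-P_d(x,s)|\,dx$, where the second term is a Markov-inequality bound using the non-negativity of $V(\cdot,s)-P_d(\cdot,s)$. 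Given $\eta>0$, I would first fix $\eps$ small enough that $\mu(C_\eps)<\eta/2$: this is legitimate because the sets $C_\eps$ decrease to $\bigcap_{\eps>0}C_\eps=\emptyset$ as $\eps\downarrow0$ and $\mu(\Lambda)<\infty$, so continuity of measure from above gives $\mu(C_\eps)\to0$. With $\eps$ fixed, the slice $L^1$ convergence supplies $N$ with $\frac1\eps\int_\Lambda|V(x,s)-P_d(x,s)|\,dx<\eta/2$ for $d\ge N$, whence $\mu(A_d\setminus A)<\eta$.

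The main obstacle is conceptual rather than computational: $L^1$ convergence of functions does not by itself force convergence of $\gamma$-sublevel sets for every $\gamma$, since a level set $\{V(\cdot,s)=\gamma\}$ of positive measure could be straddled. What rescues the ``for all $\gamma$'' claim is the one-sided containment $A\subseteq A_d$ coming from the sub-VF property, which confines the symmetric difference to the region where $V$ is strictly above $\gamma$; there the $\eps$-splitting (Markov plus continuity from above) applies uniformly in $\gamma$. The one genuinely delicate point to justify cleanly is the use of the Dirac weight in invoking Prop.~\ref{prop: SOS converges}, i.e. confirming the slice convergence, which I would handle by the reduction described in the first paragraph.
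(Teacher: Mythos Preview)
Your proposal is correct and follows essentially the same approach as the paper: first obtain the slice $L^1$ convergence $\int_\Lambda |V(x,s)-P_d(x,s)|\,dx\to 0$ by re-running the argument of Prop.~\ref{prop: SOS converges} with the Dirac weight, then use the sub-VF ordering $P_d\le V$ together with a Markov/Chebyshev bound and continuity from above to control $\mu(A_d\setminus A)$. The only cosmetic difference is that the paper packages the second step as a standalone result (Prop.~\ref{prop: close in L1 implies close in V norm} in Appendix~\ref{sec: appendix 2}), whose proof uses the sets $A_n=\{x:V(x,s)\le\gamma+1/n\}$ in place of your $C_\eps$, but the content is identical.
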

\begin{proof}
 To show Eq.~\eqref{convergence in DV for SOS} we use Prop.~\ref{prop: close in L1 implies close in V norm}, found in Appendix~\ref{sec: appendix 2}. Let us consider the family of functions, $ \{P_d \in \mcl P_d(\R^n \times \R, \R)  : d \in \N \}$, where $P_d$ solves the optimization problem given in Eq.~\eqref{opt: SOS for sub soln of finite time} for $d \in \N$ and $w(x,t)= \delta(t-s)$.

From the definition of Problem~\eqref{opt: SOS for sub soln of finite time}, we have that $P_d$ satisfies the inequalities in~\eqref{ineq: diss ineq for sub sol of HJB} and \eqref{ineq: BC}. Therefore, by Prop.~\ref{prop: diss ineq implies lower soln}, we have that  $P_d(x,t) \le V(x,t)$ for all $ (x,t) \in \Omega \times [0,T]$, where $V$ is any function satisfying Eq.~\eqref{opt: optimal control general}. Since $\Lambda \subseteq \Omega$ satisfies Eq.~\eqref{reachable set condition}, and although the Dirac delta function is not a member of $L^1(\Omega \times [0,T], \R)$, a similar argument to Prop.~\ref{prop: SOS converges} implies that

\vspace{-0.4cm}
\begin{align*}
&\lim_{d \to \infty} \int_{\Lambda} | V(x,s) - P_d(x,s)| dx dt \\
& \qquad =  \lim_{d\to \infty} \int_{\Lambda \times [0,T]} \delta(t-s) | V(x,t) - P_d(x,t)| dx dt  =0 .
\end{align*}
We now apply Prop.~\ref{prop: close in L1 implies close in V norm} (Section~\ref{sec: appendix 2}) to deduce Eq.~\eqref{convergence in DV for SOS}.
\end{proof}

\vspace{-0.5cm}
\section{A Performance Bound On Controllers Constructed Using Approximation to the VF} \label{sec:optimal controller approx}
%
{ \normalsize Given an OCP, if an associated differentiable VF is known then a solution to the OCP can be constructed using Thm.~\ref{thm: value functions construct optimal controllers}. However, in general, it is challenging to find a VF analytically. Rather than computing a true VF, we consider a candidate VF which is ``close" to a true VF under some norm. This motivates us to ask the question: how well will a controller constructed from a candidate VF perform? To answer this question we next define the loss/performance of an input. For state unconstrained OCPs, $\{c,g,f,\R^n,U,T\} \in \mcl M_{Lip}$, we denote the loss/performance function as, }

\vspace{-0.4cm}

{\small
	\begin{align}
	& L(x_0,\mbf u)  :=\int_{0}^{T} c(\phi_f(x_0,s  ,\mbf u),  \mbf u (s),s) ds  + g(\phi_f(x_0,T,\mbf u))  \\ \nonumber
	& -\inf_{\mbf u \in \mcl U_{\R^n,U,f,T}(x_0,0)} \bigg\{  \int_{0}^{T} c(\phi_f(x_0,s,\mbf u),\mbf u(s),s) ds  + g(\phi_f(x_0,T,\mbf u)) \bigg\}.
	\end{align} }
Clearly, $L(x_0,\mbf u ) \ge 0$ for all $(x_0, \mbf u) \in \Omega \times \mcl U_{\R^n,U,f,T}(x_0,0)$.
\begin{thm} \label{thm: performance bounds}
	Consider some state unconstrained OCP $\{c,g,f,\R^n ,U,T\} \in \mcl M_{Lip}$ initialized at $x_0 \in \R^n$ with associated VF $V^*$. Suppose there exists an open set $\Omega \in \R^n$ such that $FR_f(\{x_0\},\R^n,U,[0,T]) \subseteq \Omega$. Then for any function $J \in C^2( \R^n \times \R, \R)$ we have that
\begin{align} \label{ineq:loss functin}
	& L(x_0, \mbf u _J) \le  C ||J- V^*||_{W^{1,\infty}(\Omega \times [0,T], \R)}, \\ \label{eqn: C}
	& 	\vspace{-0.2cm} \text{where } C:= 2\max \left\{1,T, T\max_{1 \le i \le n } \sup_{(x,t) \in \Omega \times U} |f_i(x,u)|   \right\},
	\\ \label{J controller}
	&\qquad \quad \mbf u_J(t)= k_J(\phi_f(x_0,t,\mbf u_J), t),\\
	\label{J controller K}
	&\text{and }  \quad  k_J(x,t) \in \arg \inf_{u \in U}\{ c(x,u,t) + \nabla_x J(x,t)^T f(x,u)\}.
\end{align} 
\end{thm}
\begin{proof}


Now, for any $J \in C^2(\R^n \times \R, \R)\subset LocLip( \R^n \times \R , \R)$, we wish to show that Eq.~\eqref{ineq:loss functin} holds. To do this, we will show that $J$ is the true VF for some modified OCP. Before constructing this modified OCP, for any $F \in LocLip( \R^n \times \R , \R)$, let us define	\vspace{-0.15cm}
	\begin{align*}
	H_F(x,t,u) & := \nabla_t F(x,t) + c(x,u,t) + \nabla_x F(x,t)^T f(x,u),\\
	\tilde{H}_F(x,t) & :=\inf_{u \in U} H_F(x,t,u),
	\end{align*}
	where $\nabla_t F$ and $\nabla_x F$ are weak derivatives, known to exist by Rademacher's Theorem (Thm.~\ref{thm: Rademacher theorem}).
	
	Then, by construction, $J$ satisfies the following PDE
	\begin{align} \nonumber
	& \nabla_t J (x,t)+ \inf_{u \in U} \left\{  c(x,u,t) -\tilde{H}_J(x,t) + \nabla_x J(x,t)^T f(x,u) \right\} =0\\  & \hspace{3.5cm} \text{ for all } (x,t) \in \R^n \times  [0,T]. \label{1}
	\end{align}
Eq.~\eqref{1} implies that $J$ satisfies the HJB PDE associated with $\{\tilde{c},\tilde{g},f,\R^n,U,T\}$, where $\tilde{c}(x,u,t):=c(x,u,t)-\tilde{H}_J(x,t)$ and $\tilde{g}(x):=J(x,T)$. Note that since $c \in LocLip(\Omega \times U \times [0,T], \R)$, $f \in LocLip(\Omega \times U, \R)$, and $\frac{\partial}{\partial x_i} J \in LocLip(\Omega \times [0,T], \R)$ for all $i \in \{1,...n+1\}$ (since  $J \in C^2( \R^n \times \R, \R )$) it follows that $H_J \in LocLip(\Omega \times U \times [0,T],\R)$. By Lemma \ref{lem: family of lip is lip} we then deduce $\tilde{H}_J \in LocLip(\Omega \times [0,T],\R)$ and thus $\{\tilde{c},\tilde{g},f,\R^n,U,T\} \in \mcl M_{Lip}$.
	
 Since ${H}_J$ is independent of $u \in U$, we have that
	
	\vspace{-0.4cm}
	\begin{align*}
	\arg \inf_{u \in U}\{ \tilde{c}(x,u,t) &+ \nabla_x J(x,t)^T f(x,u)\} \\ & = \arg \inf_{u \in U}\{ c(x,u,t) + \nabla_x J(x,t)^T f(x,u)\},
	\end{align*}
	and therefore we are able to deduce by Theorem \ref{thm: value functions construct optimal controllers} that $\mbf u_J$ (given in Eq.~\eqref{J controller}) solves the modified OCP associated with $\{\tilde{c},\tilde{g},f,\R^n,U,T\}$ with initial condition $x_0 \in \R^n$. Thus for all $ \mbf u \in \mcl U_{\R^n,U,f,T}(x_0,0)$ we have that
	
	\vspace{-0.4cm}
	
	\begin{align} \label{u_J ineq}
	& \int_{0}^{T} \tilde{c}(  \phi_f(x_0,s  ,\mbf u_J),  \mbf u_J(s),s) ds  + \tilde{g}(\phi_f(x_0,T,  \mbf u_J)) \\ \nonumber
	& \le \int_{0}^{T} \tilde{c}(\phi_f(x_0,s  ,\mbf u),  \mbf u(s),s) ds  + \tilde{g}(\phi_f(x_0,T,\mbf u)).
	\end{align}
	By substituting $\tilde{c}(x,u,t)=c(x,u,t)-\tilde{H}_J(x,t)$ and $\tilde{g}(x)=J(x,T)$  into Inequality~\eqref{u_J ineq} and noting that $V^*(x,T)= g(x)$, we have the following for all $ \mbf u \in \mcl U_{\R^n,U,f,T}(x_0,0)$:
	
	\vspace{-0.4cm}
	\begin{align} \label{difference in cost}
	& \int_{0}^{T} c(  \phi_f(x_0,s  ,\mbf u_J),  \mbf u_J(s),s) ds  + {g}(\phi_f(x_0,T,  \mbf u_J)) \\ \nonumber
	& \qquad - \int_{0}^{T} {c}(\phi_f(x_0,s  ,\mbf u),  \mbf u (s),s) ds  - {g}(\phi_f(x_0,T,\mbf u)) \\ \nonumber
	& \le \int_0^T \tilde{H}_J(\phi_f(x_0,s  ,\mbf u_J),s)  - \tilde{H}_J(\phi_f(x_0,s  ,\mbf u),s)  ds \\\nonumber
	& \qquad + V^*(\phi_f(x_0,T,  \mbf u_J),T) - J(\phi_f(x_0,T,  \mbf u_J),T)\\ \nonumber
	& \qquad + J(\phi_f(x_0,T,  \mbf u ),T) - V^*(\phi_f(x_0,T,  \mbf u ),T) \\ \nonumber
	& < T\esssup_{s \in[0,T]}\{ \tilde{H}_J(\phi_f(x_0,s  ,\mbf u_J),s)    - \tilde{H}_J(\phi_f(x_0,s  ,\mbf u),s)  \}\\ \nonumber
	& \qquad +  2\sup_{y \in \Omega}\{ |V^*(y,T) - J(y,T)| \}\\ \nonumber
	& \le T \bigg( \esssup_{(y,s) \in \Omega \times [0,T]}\{ \tilde{H}_J(y,s) \} -  \essinf_{(y,s) \in \Omega \times [0,T]}\{ \tilde{H}_J(y,s) \} \bigg) \\ \nonumber
	& \qquad +  2\esssup_{(y,s) \in \Omega \times [0,T]}\{ |V^*(y,s) - J(y,s)| \}.
	\end{align}
	The second and third inequalities of Eq.~\eqref{difference in cost} follow because $\phi_f(x_0,t,\mbf u) \in \Omega$ for all $(t, \mbf u) \in [0,T] \times \in \mcl U_{\R^n,U,f,T}(x_0,0)$ (since it is assumed $FR_f(\{x_0\},\R^n,U,[0,T]) \subseteq \Omega$), and because $\sup_{y \in \Omega}\{ |V^*(y,T) - J(y,T)| \}= \esssup_{y \in \Omega}\{ |V^*(y,T) - J(y,T)| \}$ holds by Lemma~\ref{lem: esssup=sup} (since $V^*$ and $J$ are both continuous, and $\Omega$ is open).
	
	We now split the remainder of the proof into three parts. In Part~1, we derive an upper bound for $\esssup_{(y,s) \in \Omega \times [0,T]}\{ \tilde{H}_J(y,s) \}$. In Part~2, we find a lower bound for $\essinf_{(y,s) \in \Omega \times [0,T]}\{ \tilde{H}_J(y,s) \}$. In Part~3 we use these two bounds, combined with Inequality~\eqref{difference in cost} to verify Eq.~\eqref{ineq:loss functin} and complete the proof.
	
Before proceeding with Parts 1 to 3 we introduce some notation for the set of points where the VF is differentiable,
	\begin{align*}
	S_{V^*}:=\{(x,t) \in \Omega \times [0,T]: V^* \text{ is differentiable at } (x,t)\}.
	\end{align*}
Lemma~\ref{lem: value function is lip} shows that $V^* \in Lip(\Omega \times [0,T], \R) \subset LocLip(\R^n \times \R, \R)$ and Rademacher's Theorem (Thm. \ref{thm: Rademacher theorem}) states that Lipschitz functions are differentiable almost everywhere. It follows, therefore, that $\mu((\Omega \times [0,T]) / S_{V^*})=0$, where $\mu$ is the Lebesgue measure.
	
\underline{\textbf{Part 1 of Proof:}}
For each $(y,s) \in S_{V^*}$ let us consider some family of points $k^*_{y,s} \in U$ such that
\begin{align*}
	k^*_{y,s} \in \arg \inf_{u \in U} \bigg\{ c(y,u,s) + \nabla_x V^*(y,s)^Tf(y,u)  \bigg\}.
\end{align*}
Note, $k^*_{y,s}$ exists for each fixed $(y,s) \in S_{V^*}$ by the extreme value theorem since $U \subset \R^m$ is compact, $c,f$ are continuous, and $\nabla_x V^*$ is independent of $u \in U$ and bounded by Rademacher's Theorem (Thm. \ref{thm: Rademacher theorem}).
	
Now for all $ (y,s) \in S_{V^*}$ it follows that
\begin{align} \label{11}
	\tilde{H}_J(y,s) & = \inf_{u \in U} H_J(y,s,u) \le H_J(y,s,k^*_{y,s} ).
\end{align}

	Moreover, since $V^*$ is the viscosity solution to the HJB PDE by Theorem~\ref{thm: HJB value function}, we have that
	\begin{align} \label{12}
	H_{V^*}(y,s,k^*_{y,s})=0\quad  \text{for all } (y,s) \in S_{V^*}.
	\end{align}

	Combing Eqs.~\eqref{11} and \eqref{12} it follows that
	\begin{align} \nonumber
	& \tilde{H}_J(y,s)  \le H_J(y,s,k^*_{y,s}) - H_{V^*}(y,s,k^*_{y,s})\\ \nonumber
	& = \nabla_t J(y,s) - \nabla_t V^*(y,s)  + ( \nabla_x J(y,s) - \nabla_x V^*(y,s))^T f(y,k^*_{y,s})\\ \nonumber
	& \le |\nabla_t J(y,s) - \nabla_t V^*(y,s)| \\ \label{42}
	& \quad + \max_{1 \le i \le n }|f_i(y,k^*_{y,s})| \sum_{i=1}^n \bigg| \frac{\partial }{\partial x_i} (J(y,s)  -V^*(y,s) ) \bigg|
	\end{align}
for all $ (y,s) \in S_{V^*}$.	As Eq.~\eqref{42} is satisfied for all $(y,s) \in S_{V^*}$ and $\mu((\Omega \times [0,T]) / S_{V^*})=0$ it follows Eq.~\eqref{42} holds almost everywhere. Therefore
	\begin{align} \label{31}
	& \esssup_{ (y,s) \in \Omega \times [0,T] } \tilde{H}_J(y,s) \\ \nonumber
	& \qquad \le \max \bigg\{ 1, \max_{1 \le i \le n } \sup_{(x,t) \in \Omega \times U} |f_i(x,u)| \bigg\} || V^* - J||_{W^{1, \infty}(\Omega \times [0,T])}.
	\end{align}

	\underline{\textbf{Part 2 of Proof:}} If $k_J$ satisfies Eq.~\eqref{J controller K}, then
	\begin{align} \label{21}
	\tilde{H}_J(y,s)= \inf_{u \in U} H_J(y,s,u) = H_J(y,s,k_J(y,s)) \text{ for all } (y,s) \in S_{V^*}.
	\end{align}
	Moreover, since $V^*$ is a viscosity solution to the HJB PDE~\eqref{eqn: general HJB PDE}, we have by Theorem~\ref{thm: HJB value function} that
	\begin{align} \label{22}
	H_{V^*}(y,s,k_J(y,s))\ge \inf_{u \in U} H_{V^*}(y,s,u) =  0 \text{ for all } (y,s) \in S_{V^*}.
	\end{align}
	Combining Eqn's~\eqref{21} and \eqref{22} it follows that
	\begin{align} \nonumber
	& \tilde{H}_J(y,s)  \ge    H_J(y,s,k_J(y,s)) - H_{V^*}(y,s,k_J(y,s)) \\ \nonumber
	& = \nabla_t J(y,s) - \nabla_t V^*(y,s)  + ( \nabla_x J(y,s) - \nabla_x V^*(y,s))^T f(y,k_J(y,s))\\ \nonumber
	& \ge -|\nabla_t J(y,s) - \nabla_t V^*(y,s)| \\ \label{41}
	& \qquad - \max_{1 \le i \le n }|f_i(y,k_J(y,s))| \sum_{i=1}^n \bigg| \frac{\partial }{\partial x_i} (J(y,s)  -V^*(y,s) ) \bigg|
	\end{align}
for all $ (y,s) \in S_{V^*}$. Therefore, since $\mu((\Omega \times [0,T]) / S_{V^*})=0$, we have that Inequality~\eqref{41} holds almost everywhere. Thus
	\begin{align} \label{32}
	& \essinf_{ (y,s) \in \Omega \times [0,T] } \tilde{H}_J(y,s) \\ \nonumber
	&  \ge -\max \bigg\{ 1, \max_{1 \le i \le n } \sup_{(x,t) \in \Omega \times U} |f_i(x,u)| \bigg\} || V^* - J||_{W^{1, \infty}(\Omega \times [0,T])}.
	\end{align}
	
	\underline{\textbf{Part 3 of Proof:}}

Combining Inequalities~\eqref{difference in cost}, \eqref{31} and \eqref{32} it follows that
	\begin{align}  \label{8}
	& \int_{0}^{T} c(  \phi_f(x_0,s  ,\mbf u_J),  \mbf u_J(s),s) ds  + {g}(\phi_f(x_0,T,  \mbf u_J)) \\ \nonumber
	& \qquad - \int_{0}^{T} {c}(\phi_f(x_0,s  ,\mbf u),  \mbf u(s),s) ds  - {g}(\phi_f(x_0,T,\mbf u)) \\ \nonumber
	& < C ||J- V^*||_{W^{1,\infty}} \text{ for all } \mbf u \in \mcl U_{\Omega,U,f,T}(x_0,0),
	\end{align}
	where $C:= 2\max \bigg\{1,T, T\max_{1 \le i \le n } \sup_{(x,t) \in \Omega \times U} |f_i(x,u)|   \bigg\}$. Now as Inequality~\eqref{8} holds for all $ \mbf u \in \mcl U_{\R^n,U,f,T}(x_0,0)$ we can take the infimum and deduce Inequality~\eqref{ineq:loss functin}.
\end{proof}
Note, the condition in Thm.~\ref{thm: performance bounds} that $FR_f(\{x_0\},\R^n,U,[0,T]) \subseteq \Omega$ is trivially satisfied by $\Omega=\R^n$. However, since our SOS method only approximates VFs over compact sets (Prop.~\ref{prop: SOS converges}) we would need this condition to be satisfied by a compact $\Omega$ if we were to use Thm.~\ref{thm: performance bounds} to derive performance bounds for controllers synthesized from our SOS derived approximate VFs.

\vspace{-0.2cm}
\section{Numerical Examples: Using Our SOS Algorithm To Approximate VFs}
In this section we use the SOS programming problem as defined in Eq.~\eqref{opt: SOS for sub soln of finite time} to numerically approximate the VFs associated with several different OCPs. We first approximate a known VF. Then, in Subsection~\ref{subsec: approx optimal control with poly VF's}, we approximate an unknown VF and use this approximation to construct a controller and analyze its performance. Then, in Subsection~\ref{subsec:reachable sets}, we approximate another unknown VF for reachable set estimation.
\vspace{-0.2cm}
\begin{ex} \label{ex:lib}
	Let us consider the tuple $\{c,g,f,\Omega,U,T\} \in \mcl M_{Poly}$, where $c(x,u,t) \equiv 0$, $g(x)=x$, $f(x,u)=xu$, $\Omega=(-R,R)=\{x \in \R: x^2<R^2\}$, $U=(-1,1)=\{u \in \R: u^2<1\}$, and $T=1$. It was shown in \cite{liberzon2011calculus} that the VF associated with $\{c,g,f,\R^n,U,T\}$ can be analytically found as
	\vspace{-0.1cm}
	\begin{equation} \label{analytucal soln}
	V^*(x,t)= \begin{cases} \exp(t-1)x \text{ if } x>0,\\
	\exp(1-t)x \text{ if } x<0,\\
	0 \text{ if } x=0. \end{cases}
	\end{equation}
	We note that $V^*$ is not differentiable at $x=0$. However, $V^*$ satisfies the HJB PDE away from $x=0$. This problem shows that the VF can be non-smooth even for simple OCP's with polynomial vector field and cost functions.
	
	{In Fig.~\ref{fig:lib} we have plotted the point wise error, $e(x,t):=V^*(x,t) - P_d(x,t)$, where $P_d$ is the solution to the SOS Optimization Problem~\eqref{opt: SOS for sub soln of finite time} for $d=16$, $T=1$, $\Lambda=[-0.5,0.5]$, $w(x,t) \equiv 1$, $h_\Omega(x)=2.4^2 -x^2 $ and $h_U(u)=1 - u^2$. The figure shows $e(x,t) \ge 0$ for all $(x,t) \in [-0.5,0.5] \times [0,1]$ verifying that, as expected by Prop.~\ref{prop: diss ineq implies lower soln}, $P_d$ is a sub-VF. Moreover, $0<e(x,t)<0.1125$ for all $(x,t) \in [-0.5,0.5] \times [0,1]$ implying $||V^* - P_d||_{\infty}<0.1125$, showing that we get a tight VF approximation in the $L^\infty$ norm (even though we optimize for the $L^1$ norm). }
	
	In Fig.~\ref{fig: Liberzon d=1:5} we have plotted the function $F(d):=||V^*-P_d||_{L^1(\Lambda \times [0,T], \R)}$ where $V^*$ is given in Eq.~\eqref{analytucal soln} and $P_d$ is the solution to the SOS Optimization Problem~\eqref{opt: SOS for sub soln of finite time} for $d=4$ to $24$, where $\Lambda=[-0.5,0.5]$, $w(x,t) \equiv 1$, $h_\Omega(x)=2.4^2 -x^2 $ and $h_U(u)=1 - u^2$. All solutions, $V_d$, of Problem~\eqref{opt: SOS for sub soln of finite time} were sub-value functions as expected. Moreover, the figure shows by increasing the degree $d \in \N$ the resulting sub-VF, $V_d$, better approximates $V^*$, {however convergence does slow after $d=5$}.
	
		\begin{figure}
		\includegraphics[scale=0.6]{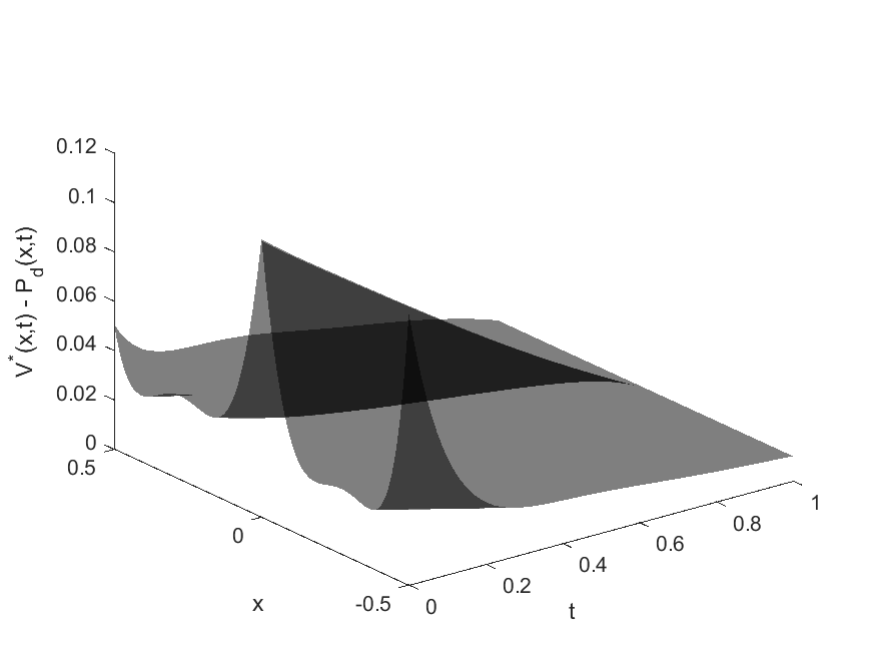}
		\vspace{-20pt}
		\caption{{Plot associated with Example~\ref{ex:lib} showing point wise error, $e(x,t):=V^*(x,t) - P_d(x,t)$ where $V^*$ is given in Eq.~\eqref{analytucal soln} and $P_d$ solves the SOS Problem~\eqref{opt: SOS for sub soln of finite time} for $d=16$.}}
		\label{fig:lib}
		\vspace{-10pt}
	\end{figure}

	\begin{figure}
		\includegraphics[scale=0.6]{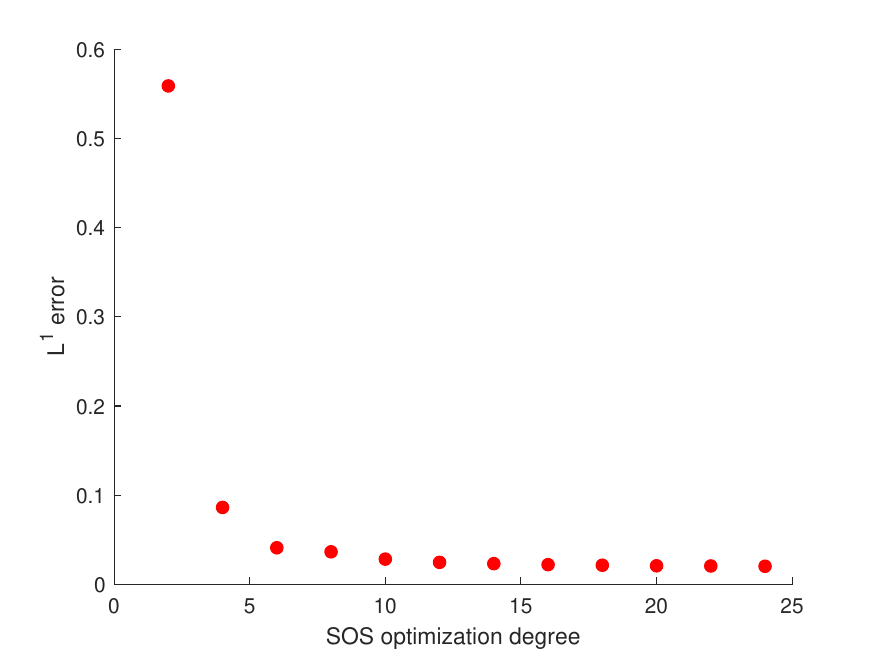}
		\vspace{-20pt}
		\caption{{Scatter plot associated with Example~\ref{ex:lib} showing the $L^1$ norm error: $||V^*-P_d||_{L^1(\Lambda \times [0,T], \R)}$, where $V^*$ is given in Eq.~\eqref{analytucal soln} and $P_d$ solves the SOS Problem~\eqref{opt: SOS for sub soln of finite time} for $d=4$ to $24$. The smallest $L^1$ norm error occur ed at $d=24$ with a value of $0.020316$.}}
		\label{fig: Liberzon d=1:5}
		\vspace{-15pt}
	\end{figure}
\end{ex}
\subsection{Using SOS Programming To Construct Polynomial Sub-Value Functions For Controller Synthesis} \label{subsec: approx optimal control with poly VF's}
Given an OCP, in Theorem~\ref{thm: performance bounds} we showed that the performance of a controller constructed from a candidate VF is bounded by the $W^{1,\infty}$ norm between the true VF of the OCP and the candidate VF. We next demonstrate through numerical examples that the performance of a controller constructed from a typical solutions to the SOS Problem~\eqref{opt: SOS for sub soln of finite time} is significantly higher than that predicted by this bound. 


Consider tuple $\{c,g,f,\R^n,U,T\} \in \mcl M_{Poly}$, where the cost function is of the form $c(x,u,t)= c_0(x,t) + \sum_{i=1}^m c_i(x,t)u_i$, the dynamics are of the form $f(x,u)=f_0(x) + \sum_{i=1}^m f_i(x)u_i$, and the input constraints are of the form $U=[a_1,b_1] \times ... \times [a_m,b_m]$. Since any rectangular set can be represented as $U=[-1,1]^m$ using the substitution $\tilde{u}_i = \frac{2u_i - 2b_i}{b_i - a_i}$ for $i \in \{1,...,m\}$, without loss of generality we assume $U=[-1,1]^m$. Now, given an OCP associated with $\{c,g,f,\R^n,U,T\} \in \mcl M_{Poly}$ suppose $V \in C^1(\R^n \times \R, \R)$ solves the HJB PDE~\eqref{eqn: general HJB PDE}, then by Theorem~\ref{thm: value functions construct optimal controllers} a solution to the OCP initialized at $x_0 \in \R^n$ can be found as
{ \begin{align} \label{optimal controller 2}
&\mbf u ^*(t):= k(\phi_f(x_0,t,\mbf u^*), t), \text{ where}\\ \label{optimal controller 3}
&	k(x,t) \in \arg \inf_{u \in [-1,1]^m}\bigg\{ \sum_{i=1}^m c_i(x,t)u_i + \nabla_x V(x,t)^T f_i(x)u_i \bigg\}.
	\end{align} }
Since the objective function in Eq.~\eqref{optimal controller 3} is linear in the decision variables $u \in \R^m$, and since the constraints have the form $u_i \in [-1,-1]$, it follows that Eqs.~\eqref{optimal controller 2} and \eqref{optimal controller 3} can be reformulated as
{ \begin{align} \label{optimal controller new}
&\mbf u ^*(t):= k(\phi_f(x_0,t,\mbf u^*), t), \text{ where}\\ \label{optimal controller new 2}
&k_i(x,t)= - \sign(c_i(x,t)+ \nabla_x V(x,t)^T f_i(x)).
	\end{align} }
%
In the following numerical examples we approximately solve {OCPs of this form (with cost functions and dynamics affine in  the input variable)} by constructing a controller from the solution $P$ to the SOS Problem~\eqref{opt: SOS for sub soln of finite time}. We construct such controllers by replacing $V$ with $P$ in Eqs.~\eqref{optimal controller new} and \eqref{optimal controller new 2}. We will consider OCPs with no state constraints and initial conditions inside some set $\Lambda \subseteq \R^n$. We select $\Omega = B(0,R)$ with $R>0$ sufficiently large enough so Eq.~\eqref{reachable set condition} is satisfied. That is, no matter what control we use, the solution map starting from any $x_0 \in \Lambda$ will not be able to leave the state constraint set $\Omega$. In this case the solution to the state constrained problem, $\{c,g,f,\Omega,U,T\}$, is equivalent to the solution of the state unconstrained problem, $\{c,g,f,\R^n,U,T\}$.

{
To evaluate the performance of our constructed controller, $\mbf{{u}}$, we approximate the objective/cost function of the OCP evaluated at $\mbf{{u}}$ (ie the cost associated with $\mbf{{u}}$) using the Riemann sum:
\begin{align} \label{R sum}
\int_0^T c(\phi_{{f}}(x_0,t,& \mbf u),t) dt  + g(\phi_{{f}}(x_0,T,\mbf u )) \\ \nonumber &  \approx  \sum_{i=1}^{N-1}  c(\phi_{{f}}(x_0,t_i,\mbf u ),t_i)\Delta t_i
 +g(\phi_{{f}}(x_0,t_N,\mbf u )) ,
\end{align}
where $0=t_0<...<t_N=T$, $\Delta t_i = t_{i+1}-t_i$ for all $i \in \{1,...,N-1\}$, and $\{\phi_{{f}}(x_0,t_i,\mbf u )\}_{i=0}^N$ can be found using Matlab's \texttt{ode45} function.
}


\begin{ex} \label{ex: Jacobson problem 4}
	Let us consider the following OCP from~\cite{jacobson1970computation}:
	\begin{align} \label{example 2 opt}
	& \min_{\mbf u} \int_0^5 x_1(t) dt \\ \nonumber
	& \text{subject to:} \begin{bmatrix}
	\dot{x_1}(t)\\
	\dot{x_2}(t)
	\end{bmatrix}= \begin{bmatrix}
	x_2(t) \\ \mbf u(t)
	\end{bmatrix}, \text{ } \mbf u(t) \in [-1,1] \text{ for all } t \in [0,5].
	\end{align}
	We associate this problem with the tuple $\{c,g,f,\Omega,U,T\} \in \mcl M_{Poly}$ where $c(x,t)= x_1$, $g(x) \equiv 0$, $f(x,u)= [x_2,u]^T$, $U=[-1,1]$, and $T=5$. By solving the SOS Optimization Problem \eqref{opt: SOS for sub soln of finite time} for $d=3$, $\Lambda=[-0.6,0.6] \times [-1,1]$, $w(x,t) \equiv 1$, $h_\Omega(x)=10^2 -x_1^2 - x_2^2 $ and $h_U(u)=1 - u^2$, it is possible to obtain a polynomial sub-value function $P$. By replacing $V$ with $P$ in Eqs.~\eqref{optimal controller new} and \eqref{optimal controller new 2} it is then possible to construct a controller, $k_P$, that yields a candidate solution to the OCP as $\mbf{\tilde{u}}(t)=k_P(x(t),t)$.
	
	
	{For initial condition $x_0 =[ 0 ,1]^T$ we use Matlab's \texttt{ode45} to find the set $\{ \phi_{\tilde{f}}(x_0,t,\mbf{\tilde{u}} ) \in \R^2: t \in [0,T] \}$ (recalling $\phi_{{f}}$ denotes the solution map (Defn.~\ref{defn: soln map})), which is shown in the phase plot in Figure~\ref{fig: problem 4}. }
	 For $N=10^8$ Eq.~\eqref{R sum} was used to find the cost associated with a fixed input, $\mbf u(t) \equiv 1$, as $354.17$, whereas the cost of using $\mbf u(t) \equiv -1$ was found to be $41.67$. The cost of using our derived input $\mbf{\tilde{u}}$ was found to be $0.2721$, an improvement when compared to the cost $0.2771$ found in~\cite{jacobson1970computation}. {Note,
	 	 it may be possible that the results of the algorithm proposed in~\cite{jacobson1970computation} may be improved by selecting different tunning parameters of the algorithm. We have assumed that the authors of~\cite{jacobson1970computation} have selected the tunning parameters for which their algorithm performs best.}

\end{ex}

\begin{figure}
	\includegraphics[scale=0.6]{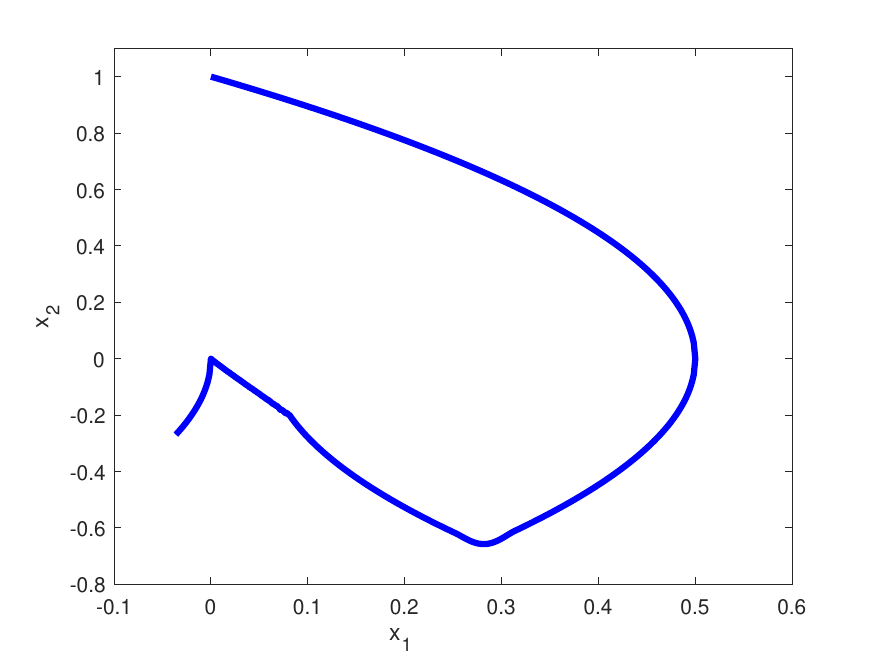}
	\vspace{-25pt}
	\caption{The phase plot of Example~\ref{ex: Jacobson problem 4} found by constructing the controller given in Eq.~\eqref{optimal controller new} using the solution to the SOS Problem~\eqref{opt: SOS for sub soln of finite time}.}
	\label{fig: problem 4}
	\vspace{-15pt}
\end{figure}
\vspace{-0.25cm}
\begin{ex} \label{ex:jacboson}
	Consider an OCP found in~\cite{jacobson1970computation} and \cite{dadebo1998computation} which has the same dynamics as Eq.~\eqref{example 2 opt} but a different cost function. The associated tuple is $\{c,g,f,\Omega,U,T\} \in \mcl M_{Poly}$ where $c(x,t)= x_1^2 +x_2^2$, $g(x) \equiv 0$, $f(x,u)= [x_2,u]^T$, $U=[-1,1]$, and $T=5$. By solving the SOS Optimization Problem \eqref{opt: SOS for sub soln of finite time} for $d=4$, $\Lambda=[-0.5,1.1]\times[-1.1,.5]$, $w(x,t) \equiv 1$, $h_\Omega(x)=10^2 -x_1^2 - x_2^2 $ and $h_U(u)=1 - u^2$, we obtain the polynomial sub-VF $P$. Similarly to Example~\ref{ex: Jacobson problem 4} we construct a controller from the polynomial sub-VF $P$ using Eqs.~\eqref{optimal controller new} and \eqref{optimal controller new 2}. Using Eq.~\eqref{R sum}, the fixed input $\mbf u(t) \equiv +1$ was found to have cost $446.03$. The fixed input $\mbf u(t) \equiv -1$ cost was found to be $67.48$. The controller derived from $P$ was found to have cost $0.7255$, an improvement compared to a cost of $0.75041$ found in \cite{dadebo1998computation} and $0.8285$ found in \cite{jacobson1970computation}. {Note that in this numerical example the dynamics are linear and the cost function is quadratic. However, due to the input constraints this is not a classical LQR problem and hence cannot be trivially solved.} {Also note, it may be possible that the results of the algorithms proposed in~\cite{jacobson1970computation,dadebo1998computation} may be improved by selecting different tunning parameters of the algorithms. We have assumed that the authors of~\cite{jacobson1970computation,dadebo1998computation} have selected the tunning parameters for which their algorithm performs best.}
\end{ex}

\begin{ex} \label{ex: van}
{	As in~\cite{9480331} let us consider the (scaled) Van der Pol oscillator:
	\begin{align}  \label{ODE: van}
	&\dot{x}_1(t)  = 2x_2(t), \\ \nonumber
	& \dot{x}_2(t)  = 10x_2(t)(0.21-1.2^2x_1(t)) -0.8x_1(t) + u(t), 
	\end{align}
	where $u(t) \in [-1,1]$. Let us consider OCPs of Form~\eqref{opt: optimal control probelm} governed by the dynamics given in Eq.~\eqref{ODE: van} with $\Omega =\R^n$, $U=[-1,1]$ and cost functions of the form $c(x,u,t)= ||x-q||_2^2$ and $g(x)=||x-q||_2^2$, where $q=[-0.4,0]$ or $q =[0;0]$. Clearly any solution to the OCP is an input $\mbf u$ that forces the systems trajectories towards the point $q\in \R^2$.}
	
{By solving the SOS Optimization Problem~\eqref{opt: SOS for sub soln of finite time} twice for $q=[-0.4,0]$ and $q =[0;0]$ with $d=14$, $T=10$, $f,c,g$ as defined previously, $\Lambda=[-1,1]^2$, $w(x,t) \equiv 1$, $h_\Omega(x)=2.1 -x_1^2 - x_2^2 $, and $h_U(u)=1 - u^2$ we obtain polynomial sub-value functions $P_1$ and $P_2$ respectively. By replacing $V$ with $P_i$, for $i \in \{1,2\}$, in Eqs.~\eqref{optimal controller new} and~\eqref{optimal controller new 2} we then construct controllers, $k_{P_i}$, that yield candidate solution to the OCPs, $\mbf{\tilde{u_i}}(t)=k_{P_i}(x(t),t)$ $i \in \{1,2\}$. }

	{For initial condition $x_0 =[ 0.75 ,0.75]^T$ and terminal time $T=10$ we use Matlab's \texttt{ode45} to find the curves $\{ \phi_{\tilde{f}}(x_0,t,\mbf{\tilde{u_i}} ) \in \R^2: t \in [0,T] \}$ for $i=1,2$ (recalling $\phi_{{f}}$ denotes the solution map (Defn.~\ref{defn: soln map})), which is shown as the blue and red curves respectively in the phase plot in Figure~\ref{fig: van}. Moreover, for comparison we have also plotted the solution trajectory under the fixed input $\mbf u(t) \equiv 0$ as the green curve, which demonstrates the shape of the Van-der-Pol limit cycle. As expected the input $\mbf u_1$ drives the system to the point $q =[-0.4;0]$ with terminal state, shown as the black dot in Figure~\ref{fig: van}, as $[-0.430;0.112]$. Moreover, the input $\mbf u_2$ drives the system to the point $q =[0;0]$ with terminal state $[-0.012;0.007]$.  }
	
	{Table~\ref{tab:cost of van} shows the $T=10$ cost of using various inputs when $q=[-0.4,0]$ or $q =[0;0]$. All costs were calculated using Eq.~\eqref{R sum} for initiali condition $[0.75;0.75]$. The costs of using $\mbf u_1$ and $\mbf u_2$ are shown in the $\mbf u_{SOS}$ row under columns $q=[-0.4,0]$ or $q =[0;0]$ respectively. As expected the inputs derived using SOS out perform (have lower cost) compared to constant inputs.}
\end{ex} 
\vspace{-0.5cm}
\begin{figure}
	\includegraphics[scale=0.6]{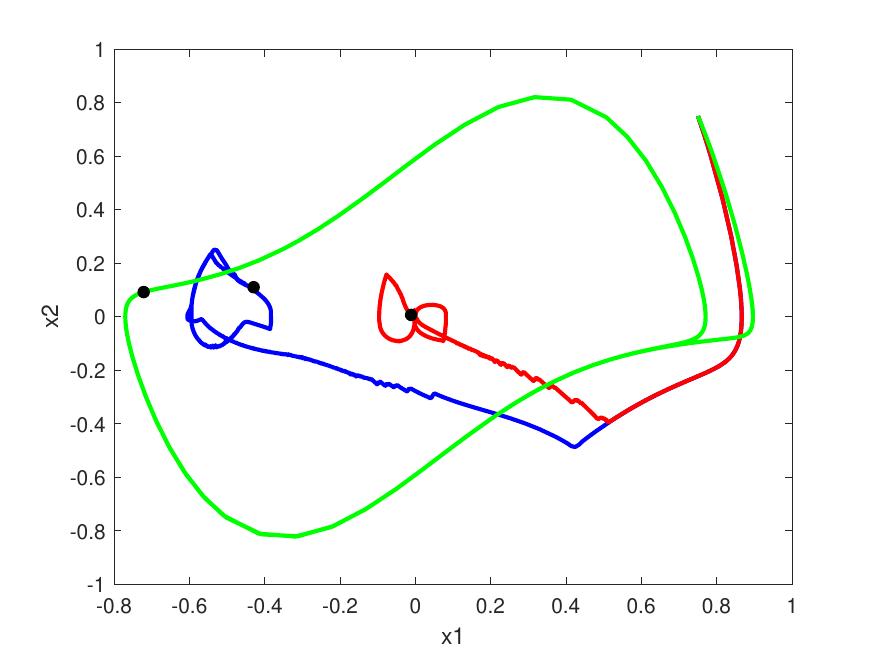}
	\vspace{-25pt}
	\caption{{Graph showing the phase plot of Example~\ref{ex: van} found by constructing controllers given by Eq.~\eqref{optimal controller new} using the solution to the SOS Problem~\eqref{opt: SOS for sub soln of finite time}. The blue curve shows the $T=10$ solution trajectory initialized at $[0.75;0.75]$ of the ODE~\eqref{ODE: van} driven by the controller found by considering costs $c(x,u,t)= ||x-q||_2^2$ and $g(x)=||x-q||_2^2$, where $q=[-0.4,0]$. The red curve shows the $T=10$ solution trajectory initialized at $[0.75;0.75]$ of the ODE~\eqref{ODE: van} driven by the controller found by considering the same costs but with $q =[0;0]$. The green curve is the $T=10$ solution trajectory initialized at $[0.75;0.75]$ of the ODE~\eqref{ODE: van} under the input $\mbf u(t) \equiv 0$. Terminal states for each trajectory are given by the black dots. Costs associated with each trajectory can be found in Table~\ref{tab:cost of van}. } }
	\label{fig: van}
	\vspace{-10pt}
\end{figure}

	\begin{table}
	\centering
	\caption{{This table shows the corresponding costs of various inputs for the OCPs of Form~\eqref{opt: optimal control probelm} given in Example~\ref{ex: van}.}} 
			\label{tab:cost of van}
	\begin{tabular}{|l|l|l|}
		\cline{1-3}
		Input $\mbf u$  & Cost for $q=[-0.4;0]$      & Cost for $q=[0;0]$    \\ \cline{1-3}
		$\mbf u_{SOS}$  & $0.21473$ &  $0.078919$        \\ \cline{1-3}
		$\mbf u(t) \equiv 0$    & $0.84466$      & $1.0037$    \\ \cline{1-3}
		$\mbf u(t) \equiv +1 $ & $1.1824$        & $2.444$     \\ \cline{1-3}
		$\mbf u(t) \equiv -1 $  & $4.5615$ & $2.4681$        \\ \cline{1-3} 
	\end{tabular}
\end{table} 
\subsection{Using SOS Programming to Construct Polynomial Sub-Value Functions For Reachable Sets Estimation} \label{subsec:reachable sets}
Appendix~\ref{app: reachable sets} shows the sublevel sets of VFs characterize reachable sets. We now numerically solve the SOS programming problem in Eq.~\eqref{opt: SOS for sub soln of finite time} obtaining an approximate VF that can be used to estimate the reachable set of the Lorenz system. The problem of estimating the Lorenz attractor has previously been studied in \cite{jones2018using,jones2019using,Li_2004,Wang_2012,Goluskin_2018}.

\begin{ex} \label{ex: Lorrenz}
Let us consider the Lorenz system defined by the three dimensional second order nonlinear ODE:
\vspace{-0.1cm}\begin{align} \nonumber
\dot{x}_1(t)  = \sigma(x_2(t) & - x_1(t)), \text{ } \dot{x}_2(t)  = x_1(t)(\rho - x_3(t)) - x_2(t), \\ \label{ODE: Lorenz}
\dot{x}_3(t) & = x_1(t) x_2(t) - \beta x_3(t),
\end{align}
	where $\sigma=10$, $\beta= 8/3$, $\rho=28$. We make a coordinate change so the Lorenz attractor is located in a unit box by defining
	\vspace{-0.1cm} \begin{align} \label{eqn: Lorentz coordinate change}
	\bar{x}_1  :=  50 x_1, \quad \bar{x}_2   := 50 x_2, \quad \bar{x}_3   := 50 x_3 + 25  .
	\end{align}
	The ODE~\eqref{ODE: Lorenz} can then be written in the form $\dot{x}(t) =\tilde{f}(x(t),\mbf u(t))$ using $\tilde{f}(x)= [50\sigma(x_2 - x_1), 50x_1(\rho - 50x_3- 50(25)) - 50x_2,50^2 x_1 x_2 - 50\beta x_3- 25 \beta ]^T$. Note, as $\tilde{f}$ is independent of any input $u \in U$ without loss of generality we will set $U= \emptyset$. The problem of estimating the Lorenz attractor is then equivalent to the problem of estimating $FR_{\tilde{f}}(\R^n,\R^n,U,\{\infty\})$. In this section we estimate $FR_{\tilde{f}}(\R^n,\R^n,U,\{\infty\})$ by estimating $FR_{\tilde{f}}(X_0,\Lambda,U,\{T\})$ for some $T< \infty$, $\Lambda \subset \R^3$, $X_0:=\{x \in \R^3: g(x)<0\}$, and $g \in \mcl P(\R^n, \R)$.
	
	Figure~\ref{fig: lorrenz} shows the set $\{x \in \R^3: P(x,0) < 0\}$ where $P$ is the solution to the SOS Optimization Problem~\eqref{opt: SOS for sub soln of finite time} for $d=10$, $T=0.5$, {$f(x)=-\tilde{f}(x)$ for all $x \in \Omega:=\{x \in \R^n: h_\Omega(x) \ge 0\}$ and $f(x)=0$ for all $x \in \partial \Omega$ (freezing the dynamics on $\partial \Omega$ helps to ensure Eq.~\eqref{reachable set condition} is satisfied, improving numerical performance)}, $h_U \equiv 0$, $h_\Omega(x)=2^2 - x_1^2 - x_2^2 - x_3^2$, $c \equiv 0$, $g(x)=   (x_1+0.6)^2 + (x_2-0.6)^2 + (x_3-0.2)^2 - 0.1^2$, $\Lambda=[-0.4, 0.4] \times [ -0.5, 0.5] \times  [-0.4, 0.6]$, and $w(x,t)=\delta(t)$ where $\delta$ is the Dirac delta function. Prop.~\ref{prop: diss ineq implies lower soln} shows $P$ is a sub-VF. Then Cor.~\ref{cor: sub value subleve; sets contain reachable sets} shows $BR_f(X_0,\Lambda,U,\{T\}) \subseteq \{x \in \R^3: P(x,0) < 0\}$ and hence $FR_{\tilde{f}}(X_0,\Lambda,U,\{T\})=BR_f(X_0,\Lambda,U,\{T\})\subseteq \{x \in \R^3: P(x,0) < 0\}$ by Lem.~\ref{lem: backward and forward reach sets}. Thus the $0$-sublevel set of $P$ contains the forward reachable set. Moreover, Figure~\ref{fig: lorrenz} provides numerical evidence that the $0$-sublevel set of $P$ approximates the Lorenz attractor accurately.
	
\end{ex}
\vspace{-0.2cm}
Note, given an OCP with VF denoted by $V^*$, Prop.~\ref{prop: SOS converges} shows that the sequence of polynomial solutions to the SOS Problem~\eqref{opt: SOS for sub soln of finite time}, indexed by $d \in \N$, converges to $V^*$ with respect to the $L^1$ norm as $d\to \infty$. Moreover,  Prop.~\ref{prop: SOS converges Dv} shows that this sequence of polynomial solutions yields a sequence of sublevel sets that converges to $\{x \in \R^n : V^*(x,0) \le 0 \}$ with respect to the volume metric as $d \to \infty$. However, Theorem.~\ref{thm: HJB to characterize reachable sets} shows reachable sets are characterized by the ``strict" sublevel sets of VFs, $\{x \in \R^n : V^*(x,0) < 0 \}$. Counterexample~\ref{cex: strict dublevel sets do not approx} (Appendix~\ref{sec: appendix 2}) shows that a sequence of functions that converges to some function $V$ with respect to the $L^1$ norm may not yield a sequence of ``strict" sublevel sets that converges to the ``strict" sublevel set of $V$. Therefore we conclude that the sequence of ``strict" sublevel sets obtained by solving the SOS Problem~\eqref{opt: SOS for sub soln of finite time} may in general not converge to the desired reachable set. However, in practice there is often little difference between the sets $\{x \in \R^n : V^*(x,0) \le 0 \}$ and $\{x \in \R^n : V^*(x,0) < 0 \}$. Example~\ref{ex: Lorrenz} shows how accurate estimates of reachable sets can be obtained by solving the SOS Problem~\eqref{opt: SOS for sub soln of finite time}. Moreover, these reachable set estimations are guaranteed to contain the true reachable set by Cor.~\ref{cor: sub value subleve; sets contain reachable sets}, a property useful in safety analysis~\cite{yin2018reachability}.
\begin{figure}
	\includegraphics[scale=0.6]{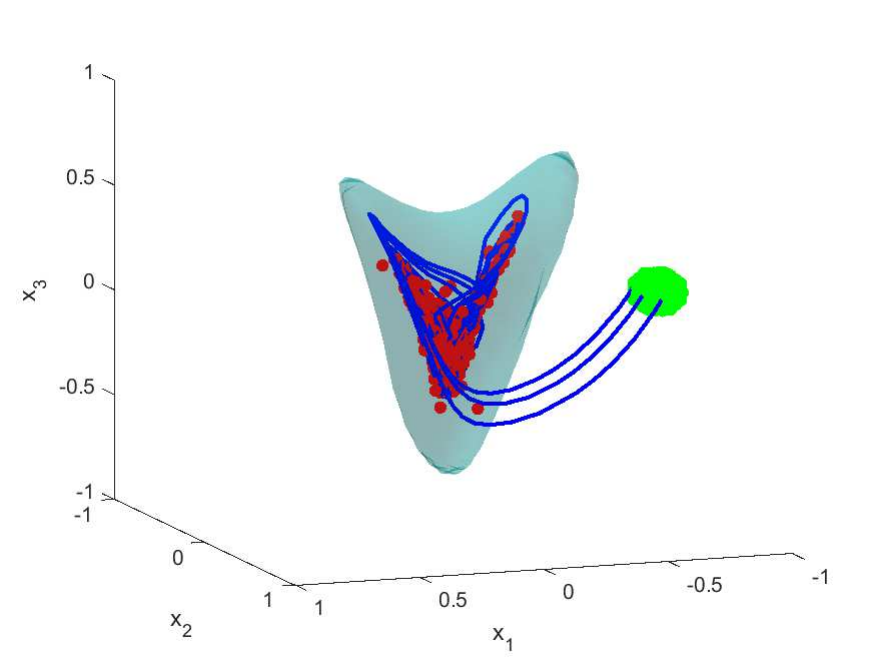}
	\vspace{-28pt}
	\caption{Forward reachable set estimation from Example~\ref{ex: Lorrenz}. The transparent cyan set represents the $0$-sublevel set of the solution to the SOS Problem~\eqref{opt: SOS for sub soln of finite time}, the $20^3$ green points represent initial conditions, the $20^3$ {red} points represent where initial conditions transition to after $t=0.5$ under scaled dynamics from the ODE \eqref{ODE: Lorenz} (found using Matlab's \texttt{ODE45} function), and the three blue curves represents three sample trajectories terminated at $t=0.5$ and initialized at three randomly selected green initial conditions.}
	\label{fig: lorrenz}
	\vspace{-15pt}
\end{figure}

\vspace{-0.6cm}
\section{Conclusion} \label{sec: conclusion}
For a given optimal control problem, we have proposed a sequence of SOS programming problems, each instance of which yields a polynomial, and where the polynomials become increasingly tight approximations to the true value function of the optimal control problem respect to the $L^1$ norm. Moreover, the sublevel sets of these polynomials become increasingly tight approximations to the sublevel sets of the true value function with respect to the volume metric. Furthermore, we have also shown that a controller can be constructed from a candidate value function that performs arbitrarily close to optimality when the candidate value function approximates the true value function arbitrarily well with respect to the $W^{1, \infty}$ norm. We would like to emphasize that our performance bound, for controllers constructed from candidate value functions, can be applied independently of our proposed SOS algorithm for value function approximation, and therefore is maybe of broader interest. 

\vspace{-0.45cm}


\bibliographystyle{ieeetr}
\bibliography{BIB_Reach_journal}
%
%
\vspace{-1.5cm}
%
\begin{IEEEbiography}[{\includegraphics[width=1in,height=1.25in,clip,keepaspectratio]{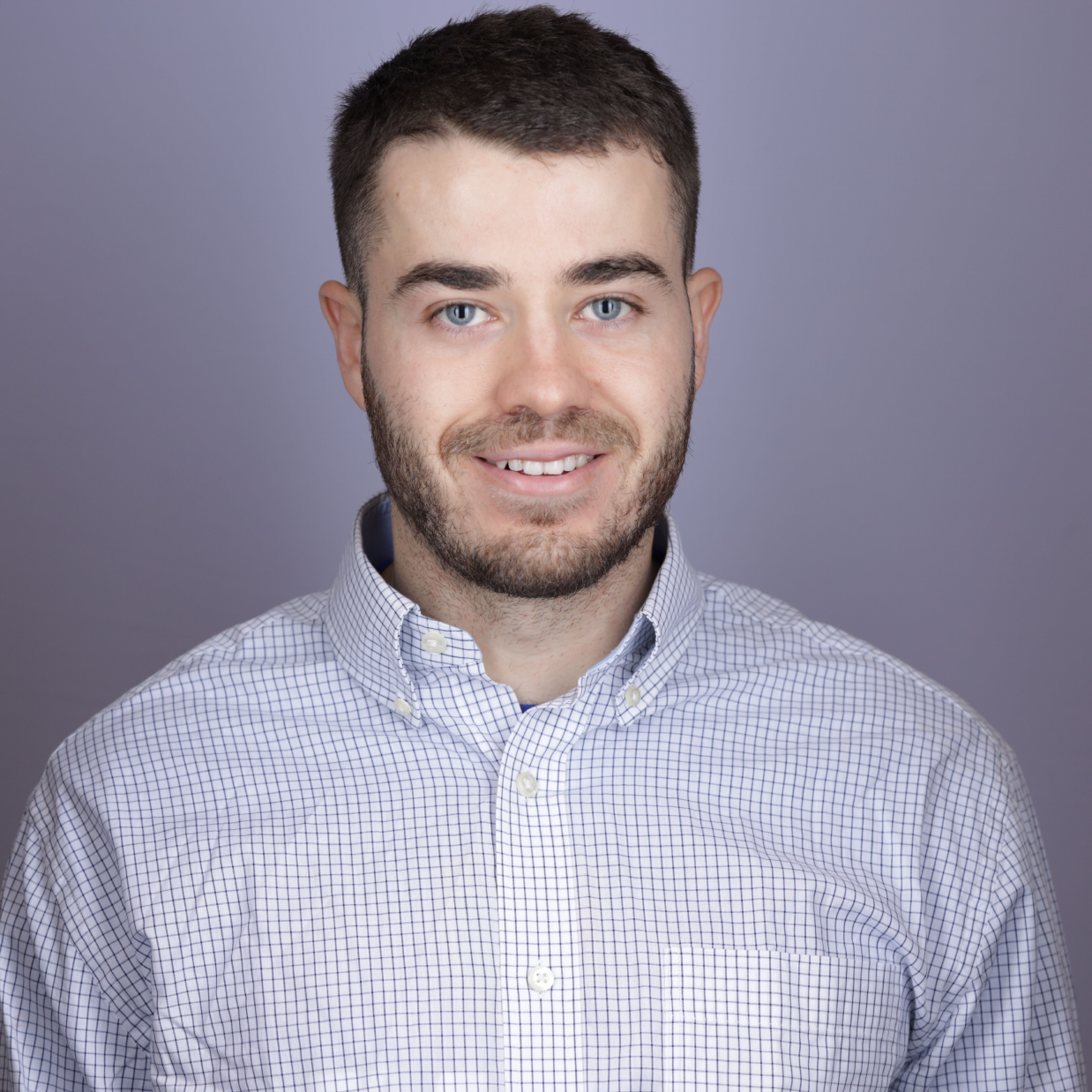}}]{Morgan Jones} \small
	received the Mmath degree in
	mathematics from The University of Oxford, England in 2016 and PhD degree from Arizona State University, USA in 2021.
	Since 2022 he has been a lecturer in the department of Automatic Control and Systems Engineering at the University of Sheffield. His research primarily
	focuses on the estimation of reachable sets, attractors and regions of attraction for nonlinear ODEs. Furthermore,
	he has an interest in extensions of the dynamic programing framework to non-separable cost functions.
\end{IEEEbiography}
\vspace{-1cm}
\begin{IEEEbiography}[{\includegraphics[width=1in,height=1.25in,clip,keepaspectratio]{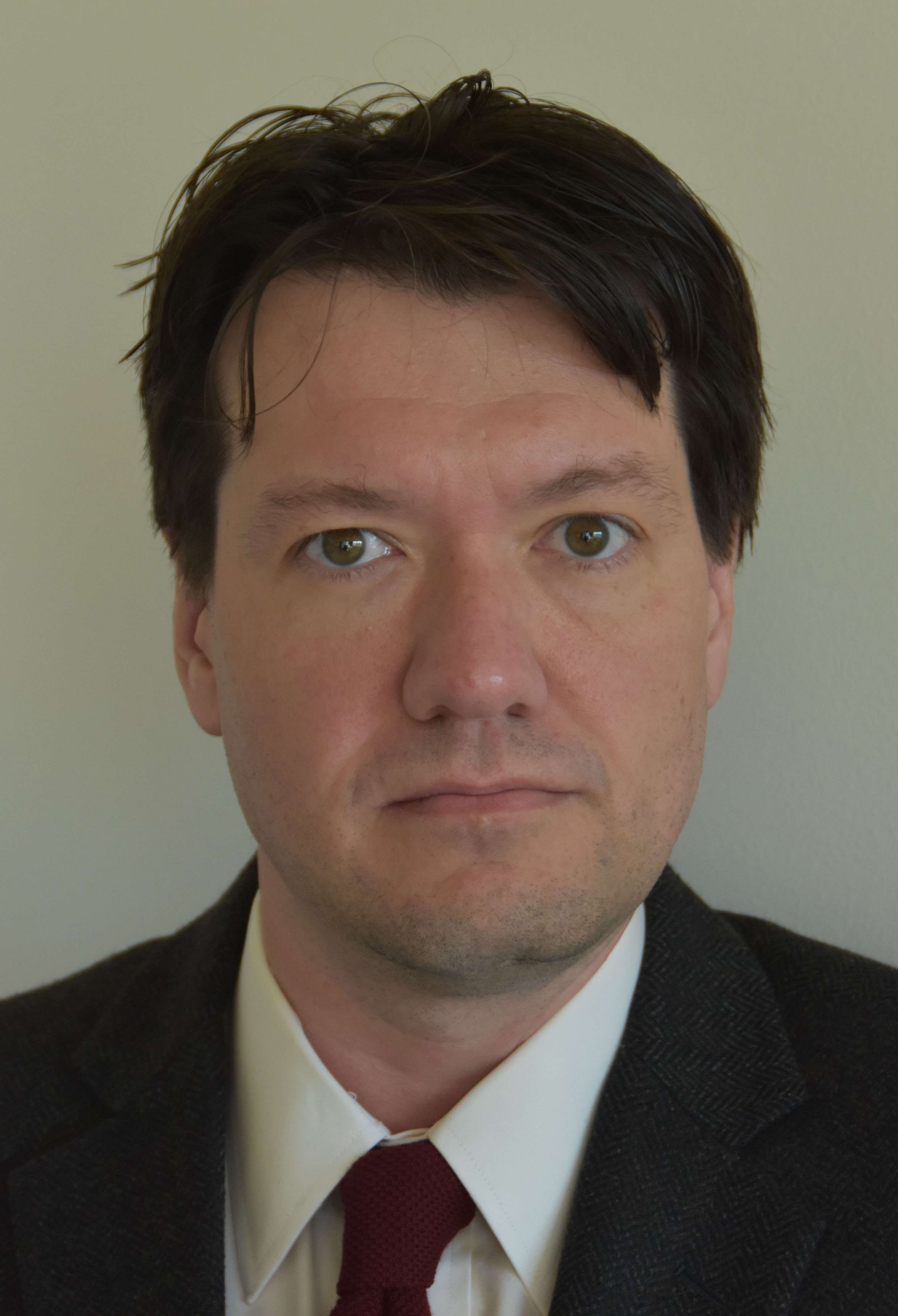}}]{Matthew M. Peet} \small
	received the B.S. degree in
	physics and in aerospace engineering from the University
	of Texas, Austin, TX, USA, in 1999 and
	the M.S. and Ph.D. degrees in aeronautics and astronautics
	from Stanford University, Stanford, CA,
	in 2001 and 2006, respectively. He was a Postdoctoral
	Fellow at INRIA, Paris, France from 2006 to
	2008. He was an Assistant Professor of Aerospace
	Engineering at the Illinois Institute of Technology,
	Chicago, IL, USA, from 2008 to 2012. Currently, he
	is an Associate Professor of Aerospace Engineering
	at Arizona State University, Tempe, AZ, USA. Dr. Peet received a National
	Science Foundation CAREER award in 2011.
\end{IEEEbiography}
%
%
%
%
%
%
\section{Appendix A: sublevel set approximation} \label{sec: appendix 2}
In this appendix we show that the volume metric ($D_V$ in Eq.~\eqref{eqn: volume metric}) is indeed a metric. Moreover, in Prop.~\ref{prop: close in L1 implies close in V norm} we show that if $\lim_{d \to \infty} ||J_d - V||_{L^1}=0$ then for any $\gamma \in \R$ we have $\lim_{d \to \infty}	D_V (\{x \in \Lambda : V(x) \le \gamma\}, \{x \in \Lambda : J_d(x) \le \gamma\} ) =0$. The sublevel approximation results presented in this appendix are required in the proof of Prop.~\ref{prop: SOS converges Dv}.
\begin{defn}
	$D: X \times X \to \R$ is a \textit{metric} if the following is satisfied for all $x,y \in X$,
	\vspace{-0.4cm}
	\setlength{\columnsep}{-0.75in}
	\begin{multicols}{2}
		\begin{itemize}
	\item $D(x,y) \ge 0$,
\item $D(x,y)=0$ iff $x=y$,
\item $D(x,y)=D(y,x)$,
\item $D(x,z) \le D(x,y) + D(y,z)$.
		\end{itemize}
	\end{multicols}
\end{defn}
\begin{lem}[\cite{jones2018using}] \label{lem: Dv is metric}
	Consider the quotient space,
{	\[
	X:= \mcl B \pmod {\{X \subset \R^n : X \ne \emptyset, \mu(X) =0 \}},
	\] } recalling $\mcl B:= \{B \subset \R^n: \mu(B)<\infty\}$ is the set of all bounded sets. Then $D_V: X \times X \to \R$, defined in Eq.~\eqref{eqn: volume metric}, is a metric.
\end{lem}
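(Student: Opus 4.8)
The plan is to recognise that $D_V(A,B) = \mu(A \triangle B)$, where $A \triangle B := (A/B) \cup (B/A)$ denotes the symmetric difference, and then to verify the four metric axioms in turn, with the identity-of-indiscernibles axiom being the one that genuinely uses the quotient construction. Since every element of $\mcl B$ is, by the definition $\mu(B) = \int_{\R^n} \mathds{1}_B(x)\,dx$, a measurable set of finite measure, the symmetric difference $A \triangle B$ is again measurable and $\mu(A \triangle B)$ is well defined.

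Non-negativity is immediate, as $\mu$ is a non-negative measure, so $D_V(A,B) = \mu(A \triangle B) \ge 0$; likewise symmetry follows at once from $A \triangle B = B \triangle A$. For the identity axiom I would argue that $D_V(A,B) = \mu(A \triangle B) = 0$ holds if and only if $A \triangle B$ is a null set, which, by the definition of the equivalence relation $\sim_M$ with $M = \{X \subset \R^n : \mu(X) = 0\}$, is precisely the statement that $A$ and $B$ represent the same element of the quotient space $X = \mcl B \pmod M$. This is the step where passing to the quotient (rather than working on raw sets) is essential, since on raw sets two distinct sets with null symmetric difference would violate the ``only if'' direction of this axiom.

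The remaining axiom, the triangle inequality, rests on the elementary set inclusion
\[
A \triangle C \subseteq (A \triangle B) \cup (B \triangle C),
\]
which I would verify by a short case check: any $x \in A \triangle C$ lies in exactly one of $A$ and $C$, and testing whether $x \in B$ places $x$ into one of the two sets on the right-hand side. Applying monotonicity of $\mu$ to this inclusion and then finite subadditivity yields
\[
D_V(A,C) = \mu(A \triangle C) \le \mu(A \triangle B) + \mu(B \triangle C) = D_V(A,B) + D_V(B,C).
\]
I do not anticipate a serious obstacle, as this is a standard fact (cf.~\cite{jones2018using}); the only points needing care are confirming that representatives of each equivalence class are measurable so that $\mu$ applies, and checking the set inclusion above, after which the measure-theoretic estimates are routine.
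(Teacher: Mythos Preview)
Your argument is correct and is the standard one; the paper itself does not give a proof of this lemma but merely cites it from~\cite{jones2018using}, so there is nothing to compare against beyond confirming your reasoning is sound. The one small addition worth making explicit is that $D_V$ is well defined on the quotient: if $A \sim A'$ and $B \sim B'$ (i.e.\ $\mu(A \triangle A') = \mu(B \triangle B') = 0$), then the same inclusion $A \triangle B \subseteq (A \triangle A') \cup (A' \triangle B') \cup (B' \triangle B)$ you use for the triangle inequality, together with subadditivity, gives $D_V(A,B) = D_V(A',B')$, so the value does not depend on the choice of representatives.
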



\begin{lem}[\cite{jones2018using}] \label{lem: D_V is related to vol}
	If $A,B \in \mcl B$ and  $B \subseteq A$ then
	\begin{align*}
D_V(A,B)& =\mu(A/B)= \mu(A)- \mu (B).
	\end{align*}
\end{lem}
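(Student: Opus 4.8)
The plan is to exploit the containment hypothesis $B \subseteq A$ to simplify both the symmetric-difference structure defining $D_V$ and to set up a disjoint decomposition of $A$; the argument is then a two-line consequence of finite additivity of the Lebesgue measure.

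First I would observe that $B \subseteq A$ forces $B/A = \{x \in B : x \notin A\} = \emptyset$, since every point of $B$ already lies in $A$. Substituting this into the definition $D_V(A,B) = \mu((A/B) \cup (B/A))$ collapses the union $(A/B) \cup (B/A)$ (the symmetric difference of $A$ and $B$) down to $A/B$, which yields the first claimed equality $D_V(A,B) = \mu(A/B)$ immediately.

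Next I would establish the second equality by writing $A$ as the disjoint union $A = B \cup (A/B)$: every $x \in A$ either lies in $B$ or lies in $A/B$, and these two sets are disjoint directly from the definition $A/B = \{x \in A : x \notin B\}$. Applying finite additivity of $\mu$ to this partition gives $\mu(A) = \mu(B) + \mu(A/B)$.

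Finally, since $B \in \mcl B$ means $\mu(B) < \infty$, the term $\mu(B)$ may be subtracted from both sides without ambiguity, producing $\mu(A/B) = \mu(A) - \mu(B)$ and completing the chain of equalities. The only step that requires any care — and the reason the hypothesis $A, B \in \mcl B$ is invoked rather than mere measurability — is that the finiteness of $\mu(B)$ is exactly what guarantees this subtraction is well-defined and avoids an indeterminate $\infty - \infty$; this finiteness assumption is the sole obstacle, and it is dispatched directly by the definition of $\mcl B$.
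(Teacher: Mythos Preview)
Your proof is correct. The paper does not actually provide a proof of this lemma --- it is stated with a citation to \cite{jones2018using} and left unproven in the text --- so there is no in-paper argument to compare against; that said, your approach via the disjoint decomposition $A = B \cup (A/B)$ and finite additivity, together with the observation that $\mu(B) < \infty$ legitimizes the subtraction, is exactly the standard elementary argument one would expect.
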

Inspired by an argument used in \cite{lasserre2015tractable} we now show if two functions are close in the $L^1$ norm then it follows their sublevel sets are close with respect to the volume metric.

\begin{prop} \label{prop: close in L1 implies close in V norm}
	Consider a set $\Lambda \in \mcl B$, a function $V \in L^1(\Lambda, \R)$, and a family of functions $\{J_d \in L^1(\Lambda, \R): d \in \N\}$ that satisfies the following properties:
	\begin{enumerate}
		\item For any $ d \in \N$ we have $J_d(x) \le V(x)$ for all $x \in \Lambda$.
		\item $\lim_{d \to \infty} ||V -J_d||_{L^1(\Lambda, \R)} =0$.
	\end{enumerate}
Then for all $\gamma \in \R$
	\begin{align} \label{sublevel sets close}
\lim_{d \to \infty}	D_V \bigg(\{x \in \Lambda : V(x) \le \gamma\}, \{x \in \Lambda : J_d(x) \le \gamma\} \bigg) =0.
	\end{align}
\end{prop}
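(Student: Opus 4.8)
The plan is to exploit the monotonicity hypothesis $J_d \le V$ to reduce the symmetric-difference metric $D_V$ to a one-sided volume difference, and then to control that difference by splitting it into a thin ``level-crossing'' layer (handled by continuity of measure) and a bulk part (handled by Markov's inequality applied to the $L^1$ bound).

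First I would fix $\gamma \in \R$ and write $A := \{x \in \Lambda : V(x) \le \gamma\}$ and $B_d := \{x \in \Lambda : J_d(x) \le \gamma\}$. Since $J_d(x) \le V(x)$ for every $x \in \Lambda$, any $x$ with $V(x) \le \gamma$ also satisfies $J_d(x) \le \gamma$, so $A \subseteq B_d$. By Lemma~\ref{lem: D_V is related to vol} this gives $D_V(A, B_d) = \mu(B_d) - \mu(A) = \mu(B_d / A)$, where $B_d / A = \{x \in \Lambda : J_d(x) \le \gamma < V(x)\}$. Hence it suffices to prove $\mu(B_d/A) \to 0$ as $d \to \infty$.

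Next, for a parameter $\eps > 0$ I would decompose $B_d / A$ according to how far $V$ exceeds $\gamma$:
\begin{align*}
B_d / A = \big(\{x \in B_d/A : V(x) \le \gamma + \eps\}\big) \cup \big(\{x \in B_d/A : V(x) > \gamma + \eps\}\big).
\end{align*}
The first set is contained in the level-crossing layer $S_\eps := \{x \in \Lambda : \gamma < V(x) \le \gamma + \eps\}$. On the second set one has $V(x) - J_d(x) > (\gamma + \eps) - \gamma = \eps$, so by Markov's inequality its measure is at most $\eps^{-1}\norm{V - J_d}_{L^1(\Lambda, \R)}$. Combining the two pieces,
\begin{align*}
\mu(B_d/A) \le \mu(S_\eps) + \frac{1}{\eps}\,\norm{V - J_d}_{L^1(\Lambda, \R)}.
\end{align*}

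Finally I would run a two-stage limiting argument. As $\eps \downarrow 0$ the sets $S_\eps$ decrease to $\{x \in \Lambda : \gamma < V(x) \le \gamma\} = \emptyset$; since $\Lambda$ has finite measure, continuity of measure from above yields $\mu(S_\eps) \to 0$. Thus, given $\eta > 0$, I would first pick $\eps$ so small that $\mu(S_\eps) < \eta/2$, and then use the $L^1$-convergence hypothesis to choose $N$ with $\eps^{-1}\norm{V - J_d}_{L^1(\Lambda, \R)} < \eta/2$ for all $d \ge N$, giving $\mu(B_d/A) < \eta$. I expect the main obstacle to be the level-crossing layer: $L^1$ convergence gives no control whatsoever on the set where $V$ sits near the value $\gamma$, so the argument genuinely relies both on the finiteness of $\mu(\Lambda)$ (to force $\mu(S_\eps) \to 0$) and on the nesting $A \subseteq B_d$ supplied by $J_d \le V$ — without the latter, the symmetric difference $D_V$ would contain a second, symmetric piece near the level set that $L^1$ closeness alone cannot bound.
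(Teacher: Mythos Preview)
Your proposal is correct and follows essentially the same route as the paper: reduce to the one-sided difference $\mu(B_d/A)$ via $J_d\le V$ and Lemma~\ref{lem: D_V is related to vol}, split off a thin layer $\{\gamma<V\le\gamma+\eps\}$ handled by continuity of measure from above (the paper uses $\eps=1/n$ and the sets $A_n$), and bound the remainder by Markov/Chebyshev applied to $\|V-J_d\|_{L^1}$. Your direct decomposition of $B_d/A$ is slightly cleaner than the paper's intermediate-set manipulation, but the two arguments are the same in substance.
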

\begin{proof}
	To prove Eq.~\eqref{sublevel sets close} we show for all $\eps>0$ there exists $N \in \N$ such that for all $d \ge N$
		\begin{align} \label{sublevel sets close 2}
	D_V \bigg(\{x \in \Lambda : V(x) \le \gamma\}, \{x \in \Lambda : J_d(x) \le \gamma\} \bigg) < \eps.
	\end{align}
In order to do this we first denote the following family of sets for each $n \in \N$
\vspace{-0.3cm} \begin{align*}
A_n:=\bigg\{x \in \Lambda : V(x) \le \gamma + \frac{1}{n} \bigg\}.
\end{align*}
Since $J_d(x) \le V(x)$ for all $x \in \Lambda$ and $ d \in \N$ we have
\begin{align} \label{J contains V}
\{x \in \Lambda : V(x) \le \gamma\} \subseteq \{x \in \Lambda : J_d(x) \le \gamma\} \text{ for all } d \in N.
\end{align}
Moreover, since $\{x \in \Lambda : V(x) \le \gamma\} \subseteq \Lambda, \; \{x \in \Lambda : J_d(x) \le \gamma\} \subseteq \Lambda$ and $\Lambda \in \mcl B$ it follows $\{x \in \Lambda : V(x) \le \gamma\} \in \mcl B \text{ and } \{x \in \Lambda : J_d(x) \le \gamma\} \in \mcl B$.

Now for $d \in \N$
\begin{align} \label{close in v norm}
&	D_V \bigg(\{x \in \Lambda : V(x) \le \gamma\}, \{x \in \Lambda : J_d(x) \le \gamma\} \bigg)\\ \nonumber
& = \mu(\{x \in \Lambda : J_d(x) \le \gamma\}) - \mu(\{x \in \Lambda : V(x) \le \gamma\})\\ \nonumber
&=  \mu(\{x \in \Lambda : J_d(x) \le \gamma\}) - \mu(A_n \cap \{x \in \Lambda : J_d(x) \le \gamma\})\\  \nonumber
&\qquad  +  \mu(A_n \cap \{x \in \Lambda : J_d(x) \le \gamma\} )  - \mu(\{x \in \Lambda : V(x) \le \gamma\})\\  \nonumber
& \le \mu(\{x \in \Lambda : J_d(x) \le \gamma\}) - \mu(A_n \cap \{x \in \Lambda : J_d(x) \le \gamma\})\\  \nonumber
&\qquad  +  \mu(A_n)  - \mu(\{x \in \Lambda : V(x) \le \gamma\})\\  \nonumber
& ={\mu(\{x \in \Lambda : J_d(x) \le \gamma\}/A_n) + \mu(A_n/\{x \in \Lambda : V(x) \le \gamma\})}.
\end{align}
The first equality of Eq.~\eqref{close in v norm} follows by Lemma~\ref{lem: D_V is related to vol} (since the sublevel sets of $V$ and $J_d$ are bounded and satisfy Eq.~\eqref{J contains V}). The first inequality follows as $A_n \cap \{x \in \Lambda : J_d(x) \le \gamma\} \subseteq A_n $ which implies $\mu(A_n \cap \{x \in \Lambda : J_d(x) \le \gamma\}) \le \mu (A_n )$. The third equality follows using Lemma~\ref{lem: D_V is related to vol} and since $A_n \cap \{x \in \Lambda: J_d(x) \le \gamma\} \subseteq \{x \in \Lambda: J_d(x) \le \gamma\}$ and $\{x \in \Lambda: V(x) \le \gamma\} \subseteq A_n$.

To show that Eq.~\eqref{sublevel sets close 2} holds for any $\eps>0$ we will split the remainder of the proof into two parts. In {Part 1} we show that there exists $N_1 \in \N$ such that $\mu(A_n/\{x \in \Lambda : V(x) \le \gamma\}) < \frac{\eps}{2}$ for all $n \ge N_1$. In {Part 2} we show that for any $n \in \N$ there exists $N_2 \in \N$ such that $\mu(\{x \in \Lambda : J_d(x) \le \gamma\}/A_n) < \frac{\eps}{2}$ for all $d \ge N_2$.

\underline{\textbf{Part 1 of proof:}} In this part of the proof we show that there exists $N_1 \in \N$ such that $\mu(A_n/\{x \in \Lambda : V(x) \le \gamma\}) < \frac{\eps}{2}$ for all $ n>N_1$.

Since $\cap_{n=1}^\infty A_n =\{x \in \Lambda : V(x) \le \gamma \}$ and $A_{n+1} \subseteq A_n$ for all $n \in \N$ we have that $\mu(\{x \in \Lambda : V(x) \le \gamma \}) = \mu ( \cap_{n=1}^\infty A_n) = \lim_{n \to \infty} \mu(A_n)$ (using the ``continuity from above" property of measures). Thus there exists $N_1 \in \N$ such that
\begin{align*}
|\mu( A_n ) - \mu( \{ x \in \Lambda : V(x) \le \gamma \})| < \frac{\eps}{2} \text{ for all } n> N_1.
\end{align*}
Therefore it follows
\begin{align*}
& \mu(A_n/\{x \in \Lambda : V(x) \le \gamma\}) \\
& \qquad = \mu( A_n ) - \mu( \{ x \in \Lambda : V(x) \le \gamma \}) < \frac{\eps}{2} \text{ for all } n> N_1.
\end{align*}

\underline{\textbf{Part 2 of proof:}} For fixed $n>N_1$ we now show there exists $N_2 \in \N$ such that $\mu(\{x \in \Lambda : J_d(x) \le \gamma\}/A_n) < \frac{\eps}{2}$ for all $d \ge N_2$.

Now
\begin{align} \label{set inclusion}
\{x \in \Lambda: J_d(x) \le \gamma\}/A_n \subseteq \{x \in \Lambda : n|J_d(x) -V(x)| \ge 1  \}\\ \nonumber
 \text{ for all } d \in \N.
\end{align}
The set containment in Eq.~\eqref{set inclusion} follows since if $y \in \{x \in \Lambda : J_d(x) \le \gamma\}/A_n$ then $y \in \Lambda$, $J_d(y) \le \gamma$ and $y \notin A_n$. Since $y \notin A_n$ we have $V(y) > \gamma + \frac{1}{n}$. Thus
\begin{align*}
n|J_d(y) - V(y)| & \ge n( V(y) - J_d(y) )  \ge n \left( \gamma + \frac{1}{n} - \gamma \right) =1,
\end{align*}
which implies $y \in \{x \in \Lambda : n|J_d(x) -V(x)| \ge 1  \}$.

Since $\lim_{d \to \infty} \int_{\Lambda} |V(x) -J_d(x)| dx =0$ there exists $N_2 \in \N$ such that
\begin{align} \label{F propoerty close in L1}
\int_{\Lambda} |V(x) -J_d(x)| dx< \frac{\eps}{2n} \text{ for all } d \ge N_2.
\end{align}

Therefore,
{\small \begin{align} \nonumber
 \mu(\{x \in \Lambda : & J_\delta(x) \le \gamma\} /A_n)   \le \mu(\{x \in \Lambda : n|J_\delta(x) -V(x)| \ge 1  \})\\ \label{88}
& \le \int_\Lambda n|J_\delta(x) -V(x)| dx  <  \frac{\eps}{2} \text{ for } d \ge N_2.
\end{align} }
\vspace{-0.25cm}

\noindent The first inequality in Eq.~\eqref{88} follows by Eq.~\eqref{set inclusion}. The second inequality follows by {Chebyshev's inequality} (Lemma~\ref{lem: chebyshev}). The third inequality follows by Eq.~\eqref{F propoerty close in L1}.
\end{proof}
Prop.~\ref{prop: close in L1 implies close in V norm} shows if a sequence of functions $\{J_d\}_{d \in \N}$ converges from bellow to some function $V$ with respect to the $L^1$ norm then the sequence sublevel sets $\{x \in \Lambda : J_d(x) \le \gamma\}$ converge to $\{x \in \Lambda: V(x) \le \gamma \}$ with respect to the volume metric. However, this does not imply the sequence of ``strict" sublevel sets $\{x \in \Lambda : J_d(x) < \gamma\}$ converge to $\{x \in \Lambda : V(x) < \gamma \}$ (even if $\{J_d\}_{d \in \N}$ converges from bellow to $V$ with respect to the $L^\infty$ norm). To see this we next consider a counterexample where $\{J_d\}_{d \in \N}$ is a family of functions that can uniformly approximate some given $V: \in Lip( (0,1) ,\R)$ but $\{x \in \Lambda : J_d(x) < \gamma\}$ does not converge to $\{x \in \Lambda: V(x) < \gamma \}$.
\begin{cex} \label{cex: strict dublevel sets do not approx}
	We show there exists $\gamma \in \R$, $\Lambda \subset \R$, $V \in Lip(\Lambda,\R)$ and $\{J_d\}_{d \in \N} \subset Lip(\Lambda,\R)$ such that $J_d(x)\le V(x)$ for all $x \in \Lambda$ and $\lim_{d \to \infty} \int_\Lambda |V(x) - J_d(x)| dx=0$ but
	\begin{align*}
	\lim_{d \to \infty} D_V\bigg(\{ x \in \Lambda : V(x) < \gamma\}, \{x \in \Lambda : J_d(x) < \gamma\} \bigg)  \ne 0
	\end{align*}
 Let
 \vspace{-0.6cm}
	\begin{align*}
	& \Lambda=(0,1), \quad  V(x)= \begin{cases} 0 \text{ if } x \in (0,0.25]\\ 2(x-0.25) \text{ if } x \in (0.25,0.75), \\ 1 \text{ if } x \in [0.75, 1) \end{cases}\\
	& J_d(x) = \begin{cases} 0 \text{ if } x \in (0,0.25]\\ 2(x-0.25) \text{ if } x \in (0.25,0.75), \\ 1 - \frac{1}{d} \text{ if } x \in [0.75, 1) \end{cases}  \gamma=1.
	\end{align*}
	Now for all $d \in \N$ it is clear that we have $J_d(x) \le V(x)$ and $V(x) -J_d(x)< \frac{1}{d}$ for all $x \in \Lambda $. This implies
	\vspace{-0.1cm}\begin{align*}
	\lim_{d \to \infty} \int_\Lambda V(x) - J_d (x) dx \le 	\lim_{d \to \infty} \sup_{x \in \Lambda} (V(x)-J_d(x)) \le  	\lim_{d \to \infty} \frac{1}{d}=0.
	\end{align*}
	However $\{x \in \Lambda : V(x)< \gamma \} = (0,0.75)$ and for all $d \in \N$ $\{x \in \Lambda: J_d(x)< \gamma \} = (0,1)$. Therefore
	\begin{align*}
	& D_V(\{x \in \Lambda:  V(x)<\gamma \},  \{x \in \Lambda: J_d(x)<\gamma \} )\\
	& \hspace{2cm} = D_V((0,0.75),(0,1)) = 0.25 \text{ for all } d \in \N.\\
	 &  \text{Hence,}\\
	&\lim_{d \to \infty} D_V\left(\{ x \in \Lambda : V(x) < \gamma\}, \{x \in \Lambda : J_d(x) < \gamma\} \right)  = 0.25 \ne 0.
	\end{align*}
\end{cex}

\vspace{-0.2cm}
\section{Appendix B: Value Functions Characterize Reachable Sets} \label{app: reachable sets}
In this appendix we present several reachable set results required in our numerical approximation of the Lorenz attractor (Example~\ref{ex: Lorrenz}). Similarly to forward reachable sets (Defn.~\ref{defn: reachbale set}) we now define backward reachable sets.
\begin{defn} \label{defn: back reachbale set}
	For $X_0 \subset \R^n$, $\Omega \subseteq \R^n$, $U \subset \R^m$, $f: \R^n \times \R^m \to \R^n$ and $S \subset \R^+$, let
	\vspace{-0.3cm}{ \begin{align*}
		BR_f &  (X_0,\Omega,U,  S):= \bigg\{y \in \R^n \;: \;\text{there exists } x \in X_0,  T \in S,  \\ & \text{and } \mbf u \in \mcl U_{\Omega,U,f,T}(y,0)
		\text{ such that } \phi_f(y,T,\mbf u)=x  \bigg\}.
		\end{align*} } \normalsize
\end{defn}

\begin{thm}[VFs characterize backward reachable sets \cite{jones2019relaxing}] \label{thm: HJB to characterize reachable sets}
	Given $\{0,g,f,\Omega,U,T\} \in \mcl M_{Lip}$ define $X_0:= \{x \in \R^n: g(x)<0 \}$. Then
	\begin{equation} \label{forward reach is sublevel set}
	BR_f(X_0,\Omega,U,\{T\}) = \{x \in \Omega : V^*(x,0) < 0 \},
	\end{equation}
	where $V^*: \R^n \times \R \to \R$ is any function that satisfies Eq.~\eqref{opt: optimal control general}.
\end{thm}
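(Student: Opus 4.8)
The plan is to prove the set equality by double inclusion, exploiting the fact that the running cost vanishes ($c \equiv 0$), which collapses the value function at $t=0$ to a pure terminal-cost infimum. Setting $t=0$ and $c\equiv 0$ in Eqn.~\eqref{opt: optimal control general}, and using $\tau_0 \mbf u = \mbf u$, I would first record the identity
$$V^*(y,0) = \inf_{\mbf u \in \mcl U_{\Omega,U,f,T}(y,0)} g(\phi_f(y,T,\mbf u)),$$
with the convention $V^*(y,0)=\infty$ whenever $\mcl U_{\Omega,U,f,T}(y,0)=\emptyset$. The theorem then reduces to comparing the sign of this infimum with membership in the backward reachable set of Defn.~\ref{defn: back reachbale set}.

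For the inclusion $BR_f(X_0,\Omega,U,\{T\}) \subseteq \{x \in \Omega : V^*(x,0)<0\}$, I would take $y \in BR_f(X_0,\Omega,U,\{T\})$ and, using Defn.~\ref{defn: back reachbale set}, produce some $x \in X_0$ and $\mbf u \in \mcl U_{\Omega,U,f,T}(y,0)$ with $\phi_f(y,T,\mbf u)=x$. Since $x \in X_0=\{g<0\}$ we have $g(\phi_f(y,T,\mbf u)) = g(x) < 0$, so the displayed identity yields $V^*(y,0) \le g(x) < 0$. Moreover, admissibility (Defn.~\ref{defn: soln map}) forces $\phi_f(y,0,\tau_0\mbf u)=y \in \Omega$, so $y$ lies in the claimed set.

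For the reverse inclusion, I would take $y \in \Omega$ with $V^*(y,0)<0$. Since $V^*(y,0) < \infty$, the admissible set $\mcl U_{\Omega,U,f,T}(y,0)$ is nonempty, and since the infimum of $g(\phi_f(y,T,\cdot))$ over this set is strictly negative, there must exist an admissible $\mbf u$ with $g(\phi_f(y,T,\mbf u)) < 0$: choosing any $\eps$ with $0<\eps<|V^*(y,0)|$, the definition of infimum yields an admissible $\mbf u$ with $g(\phi_f(y,T,\mbf u)) < V^*(y,0)+\eps < 0$. Defining $x := \phi_f(y,T,\mbf u)$ then gives $x \in X_0$ together with an admissible control steering $y$ to $x$ in time $T$, i.e. $y \in BR_f(X_0,\Omega,U,\{T\})$.

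The computations here are routine; the only point requiring care is the handling of the strict inequalities, and in particular the elementary but essential fact that a strictly negative infimum over a nonempty set is necessarily witnessed by a strictly negative element. This is precisely what allows the reverse inclusion to avoid any appeal to attainment of the infimum, and is where I expect the main (though mild) obstacle to lie. One should also be mindful of the convention $V^*=\infty$ on empty admissible sets, since it is exactly this convention that rules out spurious membership of points with no admissible trajectories in the reverse direction.
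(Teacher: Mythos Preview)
Your proof is correct. The paper itself does not actually prove this theorem; it is stated in Appendix~B with a citation to \cite{jones2019relaxing} and no argument is given, so there is nothing to compare against here. Your double-inclusion argument, reducing $V^*(y,0)$ to $\inf_{\mbf u} g(\phi_f(y,T,\mbf u))$ via $c\equiv 0$ and then handling the strict inequality by picking an admissible input witnessing a negative value, is exactly the natural elementary proof and goes through without issue.
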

\begin{cor}[Sub-VFs contain reachable sets] \label{cor: sub value subleve; sets contain reachable sets}
	Given $\{0,g,f,\Omega,U,T\} \in \mcl M_{Lip}$ and suppose $V_l: \R^n \times \R \to \R$ is a sub-VF (Defn.~\ref{defn: sub and super value functions}), then
	\vspace{-0.1cm} \begin{equation} \label{approx reachbalke set}
	BR_f(X_0,\Omega,U,\{T\}) \subseteq \{x \in \Omega : V_l(x,0) < 0 \},
	\end{equation}
	where $X_0:= \{x \in \R^n: g(x)<0 \}$.
\end{cor}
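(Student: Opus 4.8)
The plan is to chain together the exact sublevel-set characterization of the backward reachable set furnished by Theorem~\ref{thm: HJB to characterize reachable sets} with the defining inequality of a sub-VF from Definition~\ref{defn: sub and super value functions}. First I would note that the tuple under consideration, $\{0,g,f,\Omega,U,T\} \in \mcl M_{Lip}$, has running cost $c \equiv 0$ and so is precisely of the form required by Theorem~\ref{thm: HJB to characterize reachable sets}. Applying that theorem, and fixing some particular $V^*$ satisfying Eqn.~\eqref{opt: optimal control general}, I obtain the identity
\[
BR_f(X_0,\Omega,U,\{T\}) = \{x \in \Omega : V^*(x,0) < 0 \},
\]
where $X_0 = \{x \in \R^n : g(x) < 0\}$.

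Next I would invoke the hypothesis that $V_l$ is a sub-VF. By Definition~\ref{defn: sub and super value functions}, this means $V_l(x,t) \le V^*(x,t)$ for all $t \in [0,T]$ and $x \in \Omega$, for \emph{any} $V^*$ satisfying Eqn.~\eqref{opt: optimal control general}; in particular the inequality holds for the specific $V^*$ selected above, and at the time slice $t = 0$ it reads $V_l(x,0) \le V^*(x,0)$ for all $x \in \Omega$. The inclusion \eqref{approx reachbalke set} then follows by a pointwise argument: take any $y \in BR_f(X_0,\Omega,U,\{T\})$. By the characterization above, $y \in \Omega$ and $V^*(y,0) < 0$, whence $V_l(y,0) \le V^*(y,0) < 0$, so $y \in \{x \in \Omega : V_l(x,0) < 0\}$. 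Since $y$ was arbitrary, $BR_f(X_0,\Omega,U,\{T\}) \subseteq \{x \in \Omega : V_l(x,0) < 0\}$, as claimed.

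There is essentially no hard step here, since the result is a direct composition of two previously established facts; the corollary is really the sub-VF analogue of the equality in Theorem~\ref{thm: HJB to characterize reachable sets}, with the equality of sets degrading to an inclusion precisely because the sub-VF bounds $V^*$ from below. The only point warranting care is consistency of quantifiers: the sub-VF inequality must be applied to the \emph{same} $V^*$ used to characterize the reachable set, which is legitimate because Definition~\ref{defn: sub and super value functions} asserts the bound against every $V^*$ satisfying Eqn.~\eqref{opt: optimal control general}. I would therefore expect the proof to be only a few lines long.
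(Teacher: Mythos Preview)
Your proposal is correct and is precisely the argument implied by the paper, which states the result as an immediate corollary of Theorem~\ref{thm: HJB to characterize reachable sets} without giving a separate proof. The chain you describe---apply the equality $BR_f(X_0,\Omega,U,\{T\}) = \{x \in \Omega : V^*(x,0) < 0\}$ and then use $V_l(x,0)\le V^*(x,0)$ from Definition~\ref{defn: sub and super value functions}---is the intended one-line deduction.
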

%
\begin{lem}[Equivalence of computation of backward and forward reachable sets \cite{jones2019relaxing}] \label{lem: backward and forward reach sets}
	Suppose $X_0 \subset \R^n$, $\Omega \subset \R^n$, $U \subset \R^m$, $f: \R^n \times \R^m \to \R^n$, and $T \in \R^+$. Then 
		\vspace{-0.2cm}\begin{align*}
		FR_{-f}(X_0,\Omega,U,\{T\})=BR_f(X_0,\Omega,U,\{T\}).
		\end{align*}
\end{lem}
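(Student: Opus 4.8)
The plan is to prove the two set inclusions by a time-reversal (reflection) argument, exploiting the elementary fact that a trajectory of the vector field $-f$ run forward in time is precisely a trajectory of $f$ run backward in time. Throughout, the initial time is $0$, so the shift operator in Defn.~\ref{defn: soln map} acts trivially and may be suppressed.

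First I would establish $FR_{-f}(X_0,\Omega,U,\{T\}) \subseteq BR_f(X_0,\Omega,U,\{T\})$. Take any $y \in FR_{-f}(X_0,\Omega,U,\{T\})$. By Defn.~\ref{defn: reachbale set} there exist $x \in X_0$ and $\mbf u \in \mcl U_{\Omega,U,-f,T}(x,0)$ with $\phi_{-f}(x,T,\mbf u) = y$. Writing $z(t) := \phi_{-f}(x,t,\mbf u)$, this $z$ is the unique solution of $\dot z(t) = -f(z(t),\mbf u(t))$, $z(0)=x$, with $z(t) \in \Omega$ and $\mbf u(t) \in U$ for all $t \in [0,T]$, and $z(T)=y$. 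I then define the reflected signals $w(t) := z(T-t)$ and $\mbf v(t) := \mbf u(T-t)$ on $[0,T]$. Differentiating gives $\dot w(t) = -\dot z(T-t) = f(z(T-t),\mbf u(T-t)) = f(w(t),\mbf v(t))$, while $w(0) = z(T) = y$ and $w(T) = z(0) = x \in X_0$. Since $t \mapsto T-t$ is a bijection of $[0,T]$ onto itself, we also have $w(t) \in \Omega$ and $\mbf v(t) \in U$ for all $t$. Hence $w = \phi_f(y,\cdot,\mbf v)$, $\mbf v \in \mcl U_{\Omega,U,f,T}(y,0)$, and $\phi_f(y,T,\mbf v) = x \in X_0$, so $y \in BR_f(X_0,\Omega,U,\{T\})$ by Defn.~\ref{defn: back reachbale set}.

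The reverse inclusion $BR_f \subseteq FR_{-f}$ is entirely symmetric. Given $y \in BR_f(X_0,\Omega,U,\{T\})$ with witnesses $x \in X_0$ and $\mbf v \in \mcl U_{\Omega,U,f,T}(y,0)$ satisfying $\phi_f(y,T,\mbf v)=x$, the reflected signals $z(t) := \phi_f(y,T-t,\mbf v)$ and $\mbf u(t) := \mbf v(T-t)$ solve $\dot z = -f(z,\mbf u)$ with $z(0) = x$ and $z(T) = y$, placing $y$ in $FR_{-f}(X_0,\Omega,U,\{T\})$. Combining the two inclusions yields the claimed equality.

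The step I expect to require the most care is verifying that reflection genuinely maps admissible inputs to admissible inputs, i.e.\ that the reflected solution is again the \emph{unique} solution map in the sense of Defn.~\ref{defn: soln map}. Existence and the constraint conditions ($w(t) \in \Omega$, $\mbf v(t)\in U$) are immediate from the bijectivity of $t \mapsto T-t$, but uniqueness of $w = \phi_f(y,\cdot,\mbf v)$ does not follow from uniqueness of $z = \phi_{-f}(x,\cdot,\mbf u)$ purely as forward initial-value problems, because reflection interchanges the initial and terminal time points. To close this gap I would invoke backward uniqueness of the ODE solutions: any competing solution $\tilde w$ of $\dot{\tilde w} = f(\tilde w,\mbf v)$, $\tilde w(0)=y$, reflects to a solution $\tilde z(t) := \tilde w(T-t)$ of the $-f$ system that agrees with $z$ at $t=T$, and in the locally Lipschitz setting of $\mcl M_{Lip}$ (in particular for the polynomial $f$ used in Example~\ref{ex: Lorrenz}) solutions are determined both forward and backward in time, forcing $\tilde z \equiv z$ and hence $\tilde w \equiv w$.
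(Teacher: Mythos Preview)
Your proposal is correct and is the standard time-reversal argument for this equivalence. Note, however, that the paper does not actually supply its own proof of this lemma: it is stated in Appendix~B with a citation to~\cite{jones2019relaxing} and no argument is given. So there is nothing in the present paper to compare against beyond observing that your reflection construction $w(t)=z(T-t)$, $\mbf v(t)=\mbf u(T-t)$ is exactly the expected proof, and your care about backward uniqueness (needed because Defn.~\ref{defn: soln map} requires a \emph{unique} solution map) is appropriate and correctly dispatched via local Lipschitz continuity of $f$.
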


\vspace{-0.3cm}
\section{Appendix C} \label{sec: appendix 3}
In this appendix we present several miscellaneous results required in various places throughout the paper and not previously found in any of the other appendices.


\begin{thm}[Polynomial Approximation \cite{Peet2009ExpStablePolyLyap}] \label{thm:Nachbin}
Let $E \subset \R^n$ be an open set and $f \in C^1(E, \R)$. For any compact set $K \subseteq E$ and $\eps>0$ there exists  $g \in \mcl P(\R^n, \R)$ such that
\begin{align*}
\sup_{x \in K}|D^\alpha f(x) - D^\alpha g(x)| < \eps \text{ for all } |\alpha|\le 1.
\end{align*}
\end{thm}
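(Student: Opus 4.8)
The goal is simultaneous uniform approximation of $f$ and its first-order partial derivatives on $K$ by a single polynomial. The core difficulty is that approximating $f$ alone, or each $\partial f/\partial x_i$ alone, is immediate from the classical Weierstrass theorem, but these independent approximations need not be the derivatives of one common polynomial. The plan is therefore to (i) replace $f$ by a globally defined $C^1$ function agreeing with it near $K$, (ii) invoke a constructive polynomial approximation scheme whose derivatives are guaranteed to converge alongside values, and (iii) restrict back to $K$.

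First I would fix an open bounded set $W$ with $K \subset W$ and $\overline{W} \subset E$, which exists because $K$ is compact and $E$ is open, and choose a cutoff $\chi \in C^\infty(\R^n, \R)$ with $\chi \equiv 1$ on a neighborhood of $K$ and $\operatorname{supp}\chi \subset W$. I define $\tilde f := \chi f$ on $E$ and $\tilde f := 0$ off $\operatorname{supp}\chi$; since $f \in C^1(E,\R)$ and $\chi$ is smooth with support compactly contained in $E$, the function $\tilde f$ lies in $C^1(\R^n, \R)$, has compact support, and satisfies $D^\alpha \tilde f = D^\alpha f$ on a neighborhood of $K$ for all $|\alpha| \le 1$. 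This removes the obstruction that $f$ is only defined on $E$, which would otherwise prevent evaluating a global approximation scheme on a full box containing $K$.

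Next I would enclose $\operatorname{supp}\tilde f \cup K$ in a box $[-R,R]^n$ and, after the affine change of variables carrying it onto $[0,1]^n$, apply the tensor-product Bernstein operator $B_N$. The key classical fact I would cite is that for any $C^1$ function $h$ on $[0,1]^n$ one has both $B_N h \to h$ and $\partial_i B_N h \to \partial_i h$ uniformly as $N \to \infty$ (the partial derivatives of a Bernstein polynomial are themselves Bernstein-type averages of difference quotients of $h$, which converge uniformly to the corresponding derivatives by uniform continuity of $\nabla h$ on the compact box). Since $B_N h$ is a polynomial, transporting it back through the affine map yields a polynomial $g \in \mcl P(\R^n, \R)$ with $\sup_K |D^\alpha \tilde f - D^\alpha g| < \eps$ for all $|\alpha| \le 1$ once $N$ is large; because $\tilde f$ and $f$ share all first-order derivatives on $K$, this is exactly the claimed bound.

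The main obstacle is the requirement in Step (iii) that the approximation be simultaneous in value and gradient; this is precisely what rules out a naive Weierstrass argument and forces the use of an operator (here Bernstein) with built-in $C^1$ convergence, rather than approximating each $\partial f/\partial x_i$ separately and hoping the approximants assemble into the derivatives of one polynomial. An equivalent route would first mollify $f$ to a smooth $[f]_\sigma$ using Prop~\ref{prop:mollification}, whose Statements~2 and~3 give $[f]_\sigma \to f$ and $\nabla [f]_\sigma = [\nabla f]_\sigma \to \nabla f$ uniformly on $K$, and then apply the same $C^1$-convergent polynomial scheme to the resulting smooth function. I would nonetheless prefer the cutoff-plus-Bernstein version, since it avoids any appeal to regularity beyond $C^1$ and resolves both the domain-extension and the simultaneous-derivative issues in a single pass.
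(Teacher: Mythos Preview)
Your argument is correct. The cutoff step is the right way to reduce to a compactly supported $C^1$ function on all of $\R^n$, and the tensor-product Bernstein operator does indeed converge in the $C^1$ topology on a closed box (the one-variable identity $(B_N h)'=\text{Bernstein}_{N-1}$ of the scaled forward difference, together with uniform continuity of $h'$, gives $\partial_i B_N h\to\partial_i h$ uniformly, and this passes to tensor products). Pulling the affine change of variables back preserves polynomiality and the uniform bounds on $K$, so the conclusion follows.

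As for comparison: the paper does not give its own proof of this statement; it is quoted in Appendix~C with a citation and the label \texttt{thm:Nachbin}, i.e.\ it is invoked as an instance of Nachbin's $C^k$ Stone--Weierstrass theorem (polynomials are dense in $C^1(K,\R)$ for compact $K$ in the topology of uniform convergence of functions and first derivatives). Your route is genuinely different in flavor: rather than appealing to the abstract subalgebra machinery, you give a constructive approximant via Bernstein polynomials after a cutoff-and-rescale. What this buys you is an explicit operator with computable degree and no reliance on the Nachbin framework; what the citation buys the paper is brevity and immediate applicability to higher $C^k$ if it were ever needed. Your alternative mollification-then-approximate sketch is also fine but, as you note, redundant once you have a $C^1$-convergent polynomial scheme in hand.
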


\begin{thm}[Rademacher's Theorem \cite{maly1997fine} \cite{evans2010partial}] \label{thm: Rademacher theorem}
	If $\Omega \subset \R^n$ is an open subset and $V \in Lip(\Omega, \R)$, then $V$ is differentiable almost everywhere in $\Omega$ with point-wise derivative corresponding to the weak derivative almost everywhere; that is the set of points in $\Omega$ where $V$ is not differentiable has Lebesgue measure zero. Moreover,
	\vspace{-0.1cm}\begin{align*}
	\esssup_{x \in \Omega} \bigg|\frac{\partial}{\partial x_i}V(x) \bigg| \le L_V \text{ for all } 1 \le i \le n,
	\end{align*}
	where $L_V>0$ is the Lipschitz constant of $V$ and $\frac{\partial}{\partial x_i}V(x)$ is the weak derivative of $V$.
\end{thm}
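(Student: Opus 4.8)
The plan is to prove the classical theorem of Rademacher in four standard steps: establish the existence of directional derivatives almost everywhere by reduction to the one-dimensional theory; identify each directional derivative with the inner product of the direction and an a.e.-defined gradient (the vector of weak partials) via integration by parts; pass to a countable dense set of directions to obtain a single full-measure set on which all these identities hold; and finally upgrade ``directional differentiability along a dense set of directions'' to genuine (Fréchet) differentiability using the Lipschitz bound for uniformity.

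First I would fix $V \in Lip(\Omega, \R)$ with Lipschitz constant $L_V$, and for a unit vector $v$ consider the difference quotient $g_t^v(x) := (V(x+tv) - V(x))/t$. For each fixed $x$ the map $s \mapsto V(x+sv)$ is Lipschitz on an interval of $\R$, hence absolutely continuous and so differentiable for almost every $s$ by the one-dimensional Lebesgue differentiation theorem; Fubini's theorem applied along the lines parallel to $v$ then shows that $D_v V(x) := \lim_{t\to 0} g_t^v(x)$ exists for almost every $x \in \Omega$. In particular each partial $\partial V/\partial x_i = D_{e_i}V$ exists a.e. Since $\abs{g_t^v(x)} \le L_V$ everywhere, we get $\abs{D_v V(x)} \le L_V$ wherever it exists, and with $v = e_i$ this already yields the ``moreover'' bound $\esssup_{x\in\Omega} \abs{\partial V/\partial x_i} \le L_V$.

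Next I would identify $D_v V$ with $v\cdot \nabla V$, where $\nabla V := (\partial V/\partial x_1,\dots,\partial V/\partial x_n)$. For any $\phi \in C^\infty$ with compact support in $\Omega$, the uniform bound $L_V$ justifies dominated convergence, and a change of variables gives
\[
\int_\Omega D_v V(x)\,\phi(x)\,dx = \lim_{t\to 0}\int_\Omega \frac{V(x+tv)-V(x)}{t}\,\phi(x)\,dx = -\int_\Omega V(x)\,D_v\phi(x)\,dx.
\]
Expanding $D_v\phi = \sum_i v_i\,\partial_i\phi$ and invoking the definition of the weak partials (Defn.~\ref{def: weak deriv}) in each coordinate direction shows $\int_\Omega D_v V\,\phi = \int_\Omega (v\cdot\nabla V)\phi$ for all such $\phi$, whence $D_v V(x) = v\cdot\nabla V(x)$ for a.e.\ $x$; taking $v = e_i$ also confirms that the pointwise partial equals the weak partial a.e., giving the weak-derivative claim. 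Now choosing a countable dense set $\{v_k\}_{k\in\N}$ of unit vectors, the identity $D_{v_k}V(x) = v_k\cdot\nabla V(x)$ holds off a null set $N_k$, so it holds simultaneously for all $k$ on the full-measure set $A := \Omega \setminus \bigcup_k N_k$ (intersected with the full-measure set where $\nabla V$ is defined and bounded).

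The hard part is the last step: promoting this to full differentiability at each point of $A$. I would fix $x\in A$ and define the defect $Q(v,t) := g_t^v(x) - v\cdot\nabla V(x)$ for unit $v$. The Lipschitz bound yields equicontinuity in $v$, uniformly in $t$: $\abs{Q(v,t) - Q(v',t)} \le (L_V + \abs{\nabla V(x)})\,\abs{v - v'}$. Given $\eps>0$, I cover the compact unit sphere by finitely many balls of radius $\eps/(2(L_V + \abs{\nabla V(x)}))$ centred at points $v_{k_1},\dots,v_{k_N}$ of the dense set; since $x\in A$ there is $\delta>0$ with $\abs{Q(v_{k_j},t)} < \eps/2$ for all $\abs{t}<\delta$ and all $j$, and equicontinuity then forces $\abs{Q(v,t)} < \eps$ for every unit $v$ and $\abs{t}<\delta$. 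This uniform smallness of the defect over all directions is exactly the assertion that $V(y) = V(x) + \nabla V(x)\cdot(y-x) + o(\abs{y-x})$, i.e.\ differentiability at $x$. As $A$ has full measure, the theorem follows. The genuinely delicate point is precisely this compactness-plus-equicontinuity argument: directional derivatives in every direction do not alone imply Fréchet differentiability, and it is the uniform Lipschitz control that supplies the needed uniformity across directions.
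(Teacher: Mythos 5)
The paper does not actually prove this statement: Rademacher's theorem is imported verbatim from the cited references \cite{maly1997fine}, \cite{evans2010partial}, so there is no internal proof to compare against. Your argument is correct and is essentially the canonical proof found in those references --- a.e.\ existence of directional derivatives via one-dimensional differentiation of Lipschitz functions along lines plus Fubini (the only detail left implicit is that the exceptional sets are measurable, which follows since continuity of $V$ lets you compute the upper and lower difference quotients over rational $t$, making them Borel), identification of $D_v V$ with $v\cdot\nabla V$ and of the pointwise with the weak partials by the dominated-convergence/change-of-variables integration by parts, restriction to a countable dense set of directions, and the compactness-plus-Lipschitz-equicontinuity argument on the unit sphere that upgrades directional to Fr\'echet differentiability, with the $\esssup$ bound falling out of $\abs{g_t^{e_i}(x)}\le L_V$ exactly as you say.
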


\begin{lem}[Infimum of family of Lipschitz functions is Lipschitz \cite{lasz2014GEOMETRICA}] \label{lem: family of lip is lip}
	Suppose $\{h_\alpha\}_{\alpha \in I} \subset LocLip(\R^n, \R)$ is a family of locally Lipschitz continuous functions. Then $h: \R^n \to \R$ defined as $h(x):= \inf_{\alpha \in I} h_\alpha(x)$ is such that $h \in LocLip(\R^n, \R)$ provided there exists $ x \in \R^n$ such that $h(x)< \infty$.
\end{lem}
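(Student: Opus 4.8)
The plan is to verify the local Lipschitz property of $h$ directly on compact sets, using the elementary fact that an infimum of functions sharing one common Lipschitz constant is again Lipschitz with that constant. First I would fix an arbitrary compact set $X \subset \R^n$, enlarge it to a closed ball $\bar B \supseteq X \cup \{x_0\}$ (where $x_0$ is a point with $h(x_0) < \infty$), and seek a single constant $K > 0$ serving as a Lipschitz constant for \emph{every} member $h_\alpha$ of the family simultaneously on $\bar B$. This uniformity across the index set $\alpha \in I$ is the real content of the hypothesis and is where the argument must be anchored. In the setting in which this lemma is invoked (Thm.~\ref{thm: performance bounds}), where $I = U$ is compact and the family is $h_u := H_J(\cdot,\cdot,u)$ for a function $H_J$ that is locally Lipschitz jointly in all its arguments, such a $K$ exists automatically: $H_J$ is Lipschitz on the compact product $\bar B \times U$ with some constant $K_{\bar B \times U}$, and freezing $u$ shows each $h_u$ is $K_{\bar B \times U}$-Lipschitz on $\bar B$, uniformly in $u$. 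I would therefore carry out the proof assuming such a uniform $K$ is available on each compact set.

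Given $K$, the next step is to confirm $h$ is finite on $\bar B$. Since each $h_\alpha$ is real-valued, $h(x) = \inf_{\alpha} h_\alpha(x) \le h_{\alpha_0}(x) < \infty$ for any fixed $\alpha_0 \in I$, so the only danger is $h(x) = -\infty$. Using the uniform bound, for every $\alpha \in I$ and $x \in \bar B$ we have $h_\alpha(x) \ge h_\alpha(x_0) - K \norm{x - x_0} \ge h(x_0) - K \norm{x - x_0}$; taking the infimum over $\alpha$ gives $h(x) \ge h(x_0) - K \norm{x - x_0} > -\infty$. Hence $h$ is finite on $\bar B$, and since $X$ (and thus $\bar B$) was arbitrary, $h$ is real-valued on all of $\R^n$.

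The core estimate is then the standard infimum argument. Fix $x, y \in X$ and $\eps > 0$, and pick $\alpha \in I$ with $h_\alpha(y) \le h(y) + \eps$. Then
\[
h(x) \le h_\alpha(x) \le h_\alpha(y) + K \norm{x - y} \le h(y) + \eps + K \norm{x - y}.
\]
Letting $\eps \to 0$ gives $h(x) - h(y) \le K \norm{x - y}$, and exchanging $x$ and $y$ yields the two-sided bound $|h(x) - h(y)| \le K \norm{x - y}$ for all $x, y \in X$. As $X$ was an arbitrary compact set, this is precisely the statement $h \in LocLip(\R^n, \R)$.

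The main obstacle is the uniformity isolated in the first step. For a completely arbitrary index set $I$, finiteness at a single point does not by itself force $h$ to be real-valued, let alone locally Lipschitz: taking $h_k(x) = -k\norm{x}$ for $k \in \N$ gives $h(0) = 0 < \infty$ yet $h(x) = -\infty$ for $x \neq 0$. Thus the result genuinely needs the local Lipschitz constants to be uniform over $I$ on each compact set, and the one nontrivial point is to guarantee such a uniform constant; everything else (finiteness propagation and the $\eps$-approximation estimate) is routine. In the paper's application this uniformity is supplied for free by the compactness of $U$ together with the joint local Lipschitz continuity of $H_J$, as noted above.
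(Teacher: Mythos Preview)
The paper does not prove this lemma; it is simply stated in Appendix~C with a citation. There is therefore no ``paper's own proof'' to compare against, and your proposal must be judged on its own merits.

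Your analysis is correct and, in fact, sharper than the paper's statement. You have identified a genuine issue: as written, the lemma is false. Your counterexample $h_k(x) = -k\norm{x}$ is decisive --- each $h_k$ is globally (hence locally) Lipschitz, $h(0)=0<\infty$, yet $h(x)=-\infty$ for every $x\neq 0$, so $h$ is not even real-valued. The missing hypothesis is exactly the one you isolate: the local Lipschitz constants must be uniform over the index set $I$ on each compact set. Under that hypothesis your proof is the standard one and is complete: the $\eps$-approximation argument for the infimum is correct, and your finiteness-propagation step via the enlarged ball containing both $X$ and the distinguished point $x_0$ is the right way to rule out $-\infty$.

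You are also right that in the paper's sole use of this lemma (in the proof of Theorem~\ref{thm: performance bounds}, to conclude $\tilde H_J \in LocLip$), the required uniformity is available for free: the index set is the compact set $U$, and $H_J$ is jointly locally Lipschitz, so on any compact $K\subset \R^n\times[0,T]$ the function $H_J$ is Lipschitz on $K\times U$ with a single constant that serves every $u$. So the application is sound even though the lemma's hypotheses, as stated, are too weak.
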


\begin{thm}[Putinar's Positivstellesatz \cite{Putinar_1993}] \label{thm: Psatz}
	Consider the semialgebriac set $X = \{x \in \R^n: g_i(x) \ge 0 \text{ for } i=1,...,k\}$. Further suppose $\{x  \in \R^n : g_i(x) \ge 0 \}$ is compact for some $i \in \{1,..,k\}$. If the polynomial $f: \R^n \to \R$ satisfies $f(x)>0$ for all $x \in X$, then there exists SOS polynomials $\{s_i\}_{i \in \{1,..,m\}} \subset \sum_{SOS}$ such that,
	\vspace{-0.4cm}\begin{equation*}
	f - \sum_{i=1}^m s_ig_i \in \sum_{SOS}.
	\end{equation*}
\end{thm}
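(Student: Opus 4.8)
The plan is to recognize this as the classical Putinar Positivstellensatz and to prove it by the standard functional-analytic (moment-problem) route: Hahn--Banach separation followed by a representation theorem identifying positive functionals with measures. Write $M := \{ s_0 + \sum_{i=1}^k s_i g_i : s_i \in \sum_{SOS} \}$ for the quadratic module generated by $g_1,\dots,g_k$; the assertion is exactly that $f \in M$ (taking $s_0 = f - \sum_i s_i g_i \in \sum_{SOS}$). The cone $M$ is convex and closed under multiplication by squares, and the whole argument hinges on the \emph{Archimedean} property of $M$, namely the existence of some $N \in \N$ with $N - \norm{x}_2^2 \in M$. The first step is therefore to extract a usable Archimedean condition from the stated hypothesis that $\{x : g_j(x) \ge 0\}$ is compact for some $j$: since $X \subseteq \{g_j \ge 0\}$ is then compact and contained in a ball $B(0,R)$, one works with the bound $R^2 - \norm{x}_2^2 \ge 0$ on $X$ in the standard SOS fashion to guarantee the Archimedean property.

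Next I would argue by contradiction. Assume $f \notin M$. Since $M$ is a convex cone in the real vector space $\mcl P(\R^n,\R)$ and the Archimedean property makes the constant $1$ an algebraic interior point of $M$, an Eidelheit-type separation theorem for cones produces a nonzero linear functional $L: \mcl P(\R^n,\R) \to \R$ that is nonnegative on $M$ (so $L(\sigma) \ge 0$ for every $\sigma \in \sum_{SOS}$ and $L(s_i g_i) \ge 0$ for each $i$) while $L(f) \le 0$; normalize so that $L(1) = 1$.

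The central step is then to represent $L$ as integration against a positive Borel measure $\mu$ supported on $X$. Because $L$ is nonnegative on all squares, the form $(p,q) \mapsto L(pq)$ is positive semidefinite, and a GNS-type construction yields a Hilbert space on which multiplication by each coordinate $x_i$ acts. The Archimedean bound, which after multiplying $N - \norm{x}_2^2 \in M$ by a square gives $L(\norm{x}_2^2\, p^2) \le N\, L(p^2)$ for all $p$, shows these multiplication operators are bounded, self-adjoint, and commuting; the spectral theorem then furnishes a positive measure $\mu$ with $L(p) = \int p\, d\mu$ for all $p$, and the inequalities $L(s_i g_i) \ge 0$ force the support of $\mu$ to lie in $X$. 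Finally, since $f > 0$ on the compact set $X$ and $\mu$ is a nonzero positive measure, $L(f) = \int_X f\, d\mu > 0$, contradicting $L(f) \le 0$; hence $f \in M$, which is the claimed representation.

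The hard part will be the representation step: realizing the separating functional as a measure supported exactly on $X$. This is precisely the solvability of the $K$-moment problem, and it is where the compactness/Archimedean hypothesis is indispensable --- without it the functional could correspond to mass escaping to infinity or to a state admitting no measure representation, and the spectral argument bounding the coordinate multiplications would break down. Establishing the Archimedean property itself from the single-compact-sublevel-set hypothesis is the secondary technical point that must be settled before the separation argument can be run.
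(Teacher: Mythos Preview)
The paper does not prove this theorem at all: it is stated in Appendix~C purely as a cited result from~\cite{Putinar_1993}, with no accompanying argument. There is therefore no ``paper's own proof'' to compare against; you have supplied a proof where the authors simply invoke the literature.

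That said, your sketch is the standard functional-analytic route to Putinar's theorem (separation, GNS construction, spectral theorem, moment representation), and the outline is sound. The one place to be careful is precisely the point you flag at the end: the hypothesis in the statement is that a single sublevel set $\{g_j \ge 0\}$ is compact, which is \emph{not} literally the Archimedean condition $N - \norm{x}_2^2 \in M$. Bridging that gap requires an additional argument --- typically one invokes Schm\"udgen's Positivstellensatz applied to the single polynomial $g_j$ (whose preordering and quadratic module coincide) to produce the certificate $N - \norm{x}_2^2 = s_0 + s_1 g_j \in M$. This is itself a nontrivial theorem, so if you intend a self-contained proof you should either state it as a lemma with its own citation or replace the hypothesis with the Archimedean condition directly.
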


\begin{defn} \label{defn: open cover}
	Let $\Omega \subset \R^n$. We say $\{U_i \}_{i=1}^\infty$ is an open cover for $\Omega$ if $U_i \subset \R^n$ is an open set for each $i \in \N$ and $\Omega \subseteq \{U_i \}_{i=1}^\infty$.
\end{defn}

\begin{thm}[Existence of Partitions of Unity \cite{spivak1965calculus}] \label{thm: partition of unity}
Let $E \subseteq \R^n$ and let $\{E_i\}_{i=1}^\infty$ be an {open cover} of $E$. Then there exists a collection of $C^\infty(E, \R)$ functions, denoted by $\{\psi\}_{i=1}^\infty$, with the following properties:
\begin{enumerate}
	\item For all $x \in E$ and $i \in \N$ we have $0 \le \psi_i(x) \le 1$.
	\item For all $x \in E$ there exists an open set $S \subseteq E$ containing $x$ such that all but finitely many $\psi_i$ are 0 on $S$.
	\item For each $x \in E$ we have $\sum_{i=1}^\infty \psi_i(x) =1$.
	\item For each $i \in \N$ we have $\{x \in E: \psi_i(x) \ne 0\} \subseteq E_i$.
\end{enumerate}
\end{thm}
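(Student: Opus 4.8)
The plan is to build the partition directly from smooth bump functions of the same form as the standard mollifier $\eta$ in Eqn.~\eqref{fun: mollifier}, using a compact exhaustion of $E$ to force the local finiteness demanded by Statement~2. Throughout I treat $E \subseteq \R^n$ as open, as in its usage in the paper. First I would exhaust $E$ by compact sets: the sets $K_j := \{x \in E : \|x\|_2 \le j \text{ and } B(x,1/j) \subseteq E\}$ are compact, satisfy $K_j \subseteq \mathrm{int}\,K_{j+1}$, and obey $\bigcup_{j=1}^\infty K_j = E$. Writing $K_0 = K_{-1} = \emptyset$, the ``shells'' $S_j := K_{j+1} \setminus \mathrm{int}\,K_j$ are compact and cover $E$, while each point of $E$ meets only a bounded band of the slightly enlarged shells.

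Next, for each $x \in S_j$ I would choose an index $i(x)$ with $x \in E_{i(x)}$ and a radius $r_x > 0$ so small that $\overline{B(x,r_x)} \subseteq E_{i(x)}$ and, simultaneously, $B(x,r_x) \subseteq (\mathrm{int}\,K_{j+2}) \setminus K_{j-1}$. A translated and scaled copy of the construction in Eqn.~\eqref{fun: mollifier} (carried out in $\R^n$) then yields $\theta_x \in C^\infty(\R^n,\R)$ with $\theta_x \ge 0$ everywhere, $\theta_x > 0$ on $B(x,r_x/2)$, and $\mathrm{supp}\,\theta_x \subseteq B(x,r_x)$. By compactness of each shell $S_j$, finitely many balls $B(x,r_x/2)$ cover $S_j$; collecting the associated bump functions over all $j$ produces a countable family $\{\theta_k\}_{k=1}^\infty$. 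Because every support $\mathrm{supp}\,\theta_k$ lies in some band $(\mathrm{int}\,K_{j+2}) \setminus K_{j-1}$, any point of $E$ has a neighborhood meeting only finitely many of these supports, so the family is locally finite; and the positivity sets $B(x,r_x/2)$ cover all of $E$.

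I would then set $\Phi := \sum_{k=1}^\infty \theta_k$. By local finiteness this is, near each point, a finite sum of $C^\infty$ functions, so $\Phi \in C^\infty(E,\R)$, and $\Phi(x) > 0$ for every $x \in E$. Dividing, $\tilde\psi_k := \theta_k / \Phi \in C^\infty(E,\R)$ inherits local finiteness and satisfies $0 \le \tilde\psi_k \le 1$ and $\sum_k \tilde\psi_k \equiv 1$, with $\mathrm{supp}\,\tilde\psi_k \subseteq E_{i(k)}$. Finally, to index the partition by the original cover as in Statement~4, I assign each $k$ to a single index $i$ with $\mathrm{supp}\,\theta_k \subseteq E_i$ and set $\psi_i := \sum_{k : i(k)=i} \tilde\psi_k$ (an empty sum being $0$). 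The resulting $\{\psi_i\}_{i=1}^\infty \subset C^\infty(E,\R)$ then verify Statements~1--4: Statement~1 and the normalization $\sum_i \psi_i \equiv 1$ pass through the reindexing, Statement~2 follows since only finitely many $\tilde\psi_k$ are nonzero near any point, and Statement~4 holds by the support assignment.

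The main obstacle is securing Statement~2, the local finiteness, which is precisely the paracompactness of $E$ in disguise. The compact-exhaustion shell device is exactly what converts the arbitrary open cover $\{E_i\}$ into a locally finite refinement, and the delicate point is choosing each radius $r_x$ small enough to confine $\mathrm{supp}\,\theta_k$ to a bounded band of shells, so that only finitely many supports overlap near any point. Once local finiteness is established, the smoothness and strict positivity of $\Phi$, and hence the smoothness of the normalized quotients $\psi_i$, are routine.
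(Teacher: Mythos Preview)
Your proposal is correct and follows the standard compact-exhaustion-plus-bump-function construction; note, however, that the paper does not actually supply a proof of this theorem---it is stated in Appendix~C as a classical result and attributed to Spivak~\cite{spivak1965calculus}. Your argument is essentially the one found in that reference, so there is nothing to contrast.
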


\begin{lem}[Chebyshev's Inequality] \label{lem: chebyshev}
Let $(X, \Sigma, \mu)$ be a measurable space and $f\in L^1(X,\R)$. For any $\eps>0$ and $0<p<\infty$,
\vspace{-0.2cm} \begin{align*}
\mu(\{x \in X: |f(x)|>\eps\}) \le \frac{1}{\eps^p}\int_X |f(x)|^p dx.
\end{align*}
\end{lem}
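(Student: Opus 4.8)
The plan is to prove this by the standard device of bounding the indicator function of the relevant superlevel set by a suitable power of the integrand, and then integrating. First I would fix $\eps>0$ and $0<p<\infty$, and define the measurable set
\[
A := \{x \in X : |f(x)| > \eps\}.
\]
Since $f$ is measurable, $A \in \Sigma$, so $\mu(A)$ is well defined (possibly infinite a priori, though we will bound it by a finite quantity). The quantity we must control is exactly $\mu(A) = \int_X \mathds{1}_A(x)\, d\mu$.

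The key step is the pointwise bound
\[
\mathds{1}_A(x) \le \frac{|f(x)|^p}{\eps^p} \quad \text{for all } x \in X.
\]
I would verify this by splitting into two cases. If $x \in A$, then $|f(x)| > \eps$, so $|f(x)|^p / \eps^p > 1 = \mathds{1}_A(x)$ (here I use that $t \mapsto t^p$ is increasing on $[0,\infty)$ for $p>0$). If $x \notin A$, then $\mathds{1}_A(x) = 0 \le |f(x)|^p/\eps^p$ since the right-hand side is nonnegative. Thus the inequality holds everywhere on $X$.

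Finally I would integrate the pointwise bound over $X$ and invoke monotonicity of the Lebesgue integral together with linearity to pull out the constant $1/\eps^p$, giving
\[
\mu(A) = \int_X \mathds{1}_A\, d\mu \le \int_X \frac{|f(x)|^p}{\eps^p}\, d\mu = \frac{1}{\eps^p} \int_X |f(x)|^p\, d\mu,
\]
which is precisely the claimed inequality. There is no real obstacle here: the only points requiring care are that $A$ is measurable (immediate from measurability of $f$) and that the pointwise indicator bound is correctly established in both cases, after which monotonicity of the integral does all the work. I note that the finiteness hypothesis $f \in L^1$ is not strictly needed for the stated inequality to hold in $[0,\infty]$, but it guarantees the bound is meaningful when $p=1$; for general $p$ the right-hand side is simply interpreted in $[0,\infty]$, and the inequality is trivial when $\int_X |f|^p\, d\mu = \infty$.
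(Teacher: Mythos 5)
Your proof is correct: the pointwise bound $\mathds{1}_A \le |f|^p/\eps^p$ followed by monotonicity of the integral is the canonical argument, and your side remarks (measurability of $A$, that $f \in L^1$ is not actually needed, triviality when the right-hand side is infinite) are all accurate. The paper states this lemma without proof as a standard fact, so there is no alternative argument to compare against; yours is exactly the proof the paper implicitly relies on.
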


\begin{lem}[Equivalence of essential supremum and supremum \cite{StackExhange}] \label{lem: esssup=sup}
	Let $E \subset \R^n$ be an open set and $f \in C(E, \R)$. Then $\esssup_{x \in E}|f(x)| = \sup_{x \in E} |f(x)|$.
\end{lem}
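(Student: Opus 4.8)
The plan is to prove the two inequalities $\esssup_{x \in E}|f(x)| \le \sup_{x \in E}|f(x)|$ and $\sup_{x \in E}|f(x)| \le \esssup_{x \in E}|f(x)|$ separately, working directly from the definition $\esssup_{x \in E}|f(x)| = \inf\{a \in \R : \mu(\{x \in E : |f(x)| > a\}) = 0\}$ recorded in Section~\ref{sec: notataion A}.

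First I would dispatch the easy inequality $\esssup \le \sup$, which holds for any measurable function and uses neither continuity nor openness. For any $a > \sup_{x \in E}|f(x)|$ the superlevel set $\{x \in E : |f(x)| > a\}$ is empty and hence has Lebesgue measure zero, so every such $a$ lies in the set over which the infimum defining the essential supremum is taken. Passing to the infimum over all such $a$ gives $\esssup_{x \in E}|f(x)| \le \sup_{x \in E}|f(x)|$ directly.

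The reverse inequality is the substantive part, and this is where both the continuity of $f$ and the openness of $E$ are essential. I would argue by contradiction: suppose $\sup_{x \in E}|f(x)| > \esssup_{x \in E}|f(x)|$ and fix a real number $b$ strictly between them. By definition of the supremum there exists $x_0 \in E$ with $|f(x_0)| > b$. Since $|f|$ is continuous on the open set $E$, the superlevel set $\{x \in E : |f(x)| > b\}$ is the preimage of the open ray $(b,\infty)$ and is therefore open in $\R^n$, and it is nonempty because it contains $x_0$.

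The crux of the argument, and the step I expect to carry the real content, is the observation that a nonempty open subset of $\R^n$ has strictly positive Lebesgue measure, since it contains a nondegenerate ball. On the other hand, because $b > \esssup_{x \in E}|f(x)|$, the infimum characterisation of the essential supremum supplies a value $a < b$ with $\mu(\{x \in E : |f(x)| > a\}) = 0$; monotonicity of the measure applied to the inclusion $\{x \in E : |f(x)| > b\} \subseteq \{x \in E : |f(x)| > a\}$ then forces $\mu(\{x \in E : |f(x)| > b\}) = 0$, contradicting the positive measure of this nonempty open set. Hence $\sup_{x \in E}|f(x)| \le \esssup_{x \in E}|f(x)|$, and combining the two inequalities yields equality. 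The only points requiring care are aligning the strict inequalities at the threshold $b$ so that monotonicity of measure applies cleanly, and checking the degenerate case $\sup_{x \in E}|f(x)| = +\infty$, which the same reasoning covers since then every finite threshold $b$ produces a nonempty open, hence positive-measure, superlevel set, forcing the essential supremum to be $+\infty$ as well.
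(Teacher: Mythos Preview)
Your proof is correct. The paper does not actually supply its own proof of this lemma; it merely cites an external reference (\cite{StackExhange}), so there is no proof in the paper to compare against. Your argument---the trivial direction $\esssup \le \sup$ followed by the contradiction exploiting that nonempty open subsets of $\R^n$ have positive Lebesgue measure---is exactly the standard way to establish this fact and is what one would expect to find behind the citation.
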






\end{document}